\documentclass[11pt,a4paper]{article}
\usepackage{ifpdf}
\usepackage[utf8]{inputenc}
\usepackage[T1]{fontenc}
\usepackage{graphicx}
\usepackage{color}
\usepackage{textcomp}
\usepackage{amsmath}
\usepackage{amssymb}
\usepackage{amsthm}
\usepackage{geometry}
\usepackage[all]{xy}

\usepackage[usenames,dvipsnames]{pstricks}
\usepackage{epsfig}
\usepackage{pst-grad} % For gradients
\usepackage{pst-plot} % For axes
\geometry{ hmargin=3cm, vmargin=3cm }

\title{Constraint equations for $3+1$ vacuum Einstein equations with a translational space-like Killing field in the asymptotically flat case}
\author{Cécile Huneau}

\ifpdf
\pdfinfo {}
\fi

\newtheorem{thm}{Theorem}[section]
\newtheorem{prp}[thm]{Proposition}
\newtheorem{cor}[thm]{Corollary}
\newtheorem{lm}[thm]{Lemma}
\newtheorem{df}[thm]{Definition}
\newtheorem{rk}[thm]{Remark}

\newcommand{\m}[1]{\mathbb{#1}}
\newcommand{\q}[1]{\mathcal{#1}}

\newcommand{\wht}[1]{\widetilde{#1}}
\newcommand{\gra}[1]{\mathbf{#1}}
\newcommand{\grat}[1]{\mathbf{\widetilde{#1}}}

\begin{document}

\maketitle
\thanks{Ecole Normale Supérieure, 45 rue d'Ulm, 75005 Paris, email : cecile.huneau@ens.fr}
\begin{abstract}
 We solve the Einstein constraint equations for a $3+1$ dimensional vacuum space-time with a space-like translational Killing field. 
The presence of a space-like translational Killing field allows for a reduction of the $3+1$ dimensional problem to a $2+1$ dimensional one.
Vacuum Einstein equations with a space-like translational Killing field  have been studied by Choquet-Bruhat and Moncrief in the compact case. In the case 
where an additional rotational symmetry is added, the problem has a long history (see \cite{beck}, \cite{asht}, \cite{chru}). In this paper we consider the asymptotically flat case.
This corresponds to solving a nonlinear elliptic system on $\m R^2$. The main difficulty in that case is due to the delicate inversion of the Laplacian on
$\m R^2$. In particular, we have to work in the non-constant mean curvature setting, which enforces us to consider the intricate coupling of the Einstein constraint equations.

\end{abstract}

\section{Introduction}
Einstein equations can be formulated as a Cauchy problem whose initial data must satisfy compatibility conditions known as the constraint equations.
In this paper, we will consider the constraint equations for the vacuum Einstein equations, in the particular case where the space-time possesses a 
space-like translational Killing field. It allows for a reduction of the $3+1$ dimensional
problem to a $2+1$ dimensional one. This symmetry has been studied by Choquet-Bruhat and Moncrief in \cite{choquet} (see also \cite{livrecb})
in the case of a
space-time of the form $\Sigma \times \mathbb{S}^1  \times \mathbb{R}$, where $\Sigma$ is a compact two dimensional manifold of genus $G\geq 2$, and $\mathbb{R}$
is the time axis, with a space-time metric independent of the $\mathbb{S}^1$ coordinate. 
They prove the existence of global solutions corresponding to perturbation of particular expanding initial data.

In this paper we consider a space-time of the form $\mathbb{R}^2 \times \mathbb{R} \times \mathbb{R}$, symmetric with respect to the third coordinate.
Minkowski space-time is a particular solution of vacuum Einstein equations which exhibits this symmetry. Since the celebrated work of Christodoulou and Klainerman (see \cite{ck}), we know that Minkowski space-time is stable, that is to say asymptotically flat perturbations of the trivial initial data lead to global solutions converging to Minkowski space-time.
It is an interesting problem to ask whether
the stability also holds in the setting of perturbations of Minkowski space-time with a space-like translational Killing field. Let's note that it is not included in the work of Christodoulou and Klainerman. However, it is crucial, before considering this problem, 
to ensure the existence of compatible initial data, i.e. the existence of solutions to the constraint equations. This is the purpose of the present paper.

In the compact case, if one looks for solutions with constant mean curvature, as it is done in \cite{choquet}, the issue of solving the constraint equations is straightforward. 
Every metric on a compact manifold of genus $G\geq 2$ is conformal
to a metric of scalar curvature $-1$. As a consequence, it is possible to decouple the system into elliptic
scalar equations of the form $\Delta u = f(x,u)$ with $\partial_u f >0$, for which existence results  are standard (see for example chapter $14$ in \cite{tay}).

The asymptotically flat case is more challenging. First, the definition of an asymptotically flat manifold is not so clear in two dimension. 
In \cite{beck}, \cite{asht}, \cite{chru} radial solutions of the $2+1$ dimensional problem with an angle at space-like infinity are constructed.
In particular, these solutions do not tend to the Euclidean metric at space-like infinity. Moreover, the behaviour of the Laplace operator on $\m R^2$ makes the issue of finding solutions to the constraint equations
more intricate.

\subsection{Reduction of the Einstein equations}
Before discussing the constraint equations, we first briefly recall the form of the Einstein equations in the presence of a space-like translational 
Killing field. We follow here the exposition
in \cite{livrecb}.
A metric $^{(4)}\mathbf{g}$ on  $\mathbb{R}^2 \times \m R  \times \mathbb{R}$ admitting $\partial_3$ as a Killing field can be written
$$^{(4)}\mathbf{g} = \grat g + e^{2\gamma}(dx^3 +A_\alpha dx^\alpha)^2, $$
where $\grat g$ is a Lorentzian metric on $\mathbb{R}^{1+2}$, $\gamma$ is a scalar function on $\mathbb{R}^{1+2}$, $A$ is a $1$-form on
$\mathbb{R}^{1+2}$ and $x^\alpha$, $\alpha=0,1,2$, are the coordinates on $\mathbb{R}^{1+2}$. Since $\partial_3$ is a Killing field, $\mathbf{g}$, $\gamma$ and $A$ do not depend on $x^3$.
We set $F=dA$, where $d$ is the exterior differential. $F$ is then a $2$-form. Let also $^{(4)}\mathbf{R}_{\mu \nu}$ denote the Ricci tensor associated to $^{(4)}\mathbf{g}$.
$\grat R_{\alpha \beta}$ and $\grat D$ are respectively the Ricci tensor and the covariant derivative associated to $\grat g$.

With this metric, the vacuum Einstein equations
$$^{(4)}\mathbf{R}_{\mu \nu} = 0, \; \mu, \nu = 0,1,2,3$$
can be written in the basis $(dx^\alpha, dx^3+A_\alpha dx^\alpha)$  (see \cite{livrecb} appendix VII)
\begin{align}
\label{rab}0= ^{(4)}\gra R_{\alpha \beta} &= \grat R_{\alpha \beta}-\frac{1}{2}e^{2\gamma}{F_\alpha}^\lambda F_{\beta \lambda}
 -\grat D_\alpha \partial_\beta \gamma -\partial_\alpha \gamma \partial_\beta \gamma,\\
\label{ra3} 0=^{(4)}\gra R_{\alpha 3} &= \frac{1}{2}e^{-\gamma} \grat D_\beta (e^{3\gamma}{F_\alpha}^\beta),\\
\label{r33} 0=^{(4)}\gra R_{33}&= -e^{-2\gamma} \left( -\frac{1}{4}e^{2\gamma}F_{\alpha \beta}F^{\alpha \beta} + \grat g^{\alpha \beta}\partial_\alpha \gamma \partial_\beta \gamma
 + \grat g^{\alpha \beta}\grat D_\alpha \partial_\beta \gamma\right).
\end{align} 

The equation \eqref{ra3} is equivalent to 
$$d(\ast e^{3\gamma}F)=0$$
where $\ast e^{3\gamma}F$ is the adjoint one form associated to $e^{3\gamma}F$.
This is equivalent, on $\m R^{1+2}$, to the existence of a potential $ \omega$ such that
$$\ast e^{3\gamma}F=d\omega.$$
Since $F$ is a closed $2$-form, we have $dF=0$. By doing the conformal change of metric $\grat g = e^{-2\gamma}\gra g$, 
this equation, together with the equations \eqref{rab} and \eqref{r33}, yield the following system,

\begin{align}
\label{wavebis} &\Box_{\gra g} \omega -4 \partial^\alpha \gamma \partial_\alpha \omega=0,\\
 \label{wave}& \Box_\mathbf{g} \gamma +\frac{1}{2}e^{-4\gamma} \partial^\alpha \omega \partial_\alpha \omega= 0, \\
\label{einst2} &\mathbf{R}_{\alpha \beta} = 2\partial_\alpha \gamma\partial_\beta \gamma +\frac{1}{2}e^{-4\gamma}\partial_\alpha \omega \partial_\beta \omega, \; \alpha,\beta = 0,1,2,
\end{align}
where $\Box_\mathbf{g}$ is the d'Alembertian\footnote{$\Box_\mathbf{g}$ is the
Lorentzian equivalent of the Laplace-Beltrami operator in Riemannian geometry. In a coordinate system,
we have $\Box_\mathbf{g} u = \frac{1}{\sqrt{|\mathbf{g}|}}\partial_\alpha(\mathbf{g}^{\alpha \beta}\sqrt{|\mathbf{g}|}\partial_\beta u)$.} in the metric $\mathbf{g}$
 and  $\mathbf{R}_{\alpha \beta}$ is the Ricci tensor associated to $\mathbf{g}$.
We introduce the following notation
\begin{equation}\label{defubis}
 u \equiv (\gamma,\omega),
\end{equation}
together with the scalar product
\begin{equation}\label{defu}
\partial_\alpha u. \partial_\beta u=2\partial_\alpha \gamma \partial_\beta \gamma + \frac{1}{2}e^{-4\gamma}\partial_\alpha \omega \partial_\beta \omega.
\end{equation}

We consider the Cauchy problem for the equations \eqref{wavebis}, \eqref{wave} and \eqref{einst2}.
As it is in the case for the $3+1$ Einstein equation, the initial data for \eqref{wavebis}, \eqref{wave} and \eqref{einst2} cannot be prescribed arbitrarily. They have to satisfy constraint equations.

\subsection{Constraint equations}
 We can write the metric $\mathbf{g}$ under the form
\begin{equation}\label{metrique}
\mathbf{g} = -N^2(dt)^2 +g_{ij}(dx^i +\beta^i dt)(dx^j + \beta^j dt),
\end{equation}
where the scalar function $N$ is called the lapse, the vector field $\beta$ is called the shift and $g$ is a Riemannian metric on $\m R^2$. 

We consider the initial space-like surface $\m R ^2 = \{t=0\}$. Let $T$ be the unit normal to $\m R ^2 = \{t=0\}$. We set
$$e_0=NT=\partial_t-\beta^j \partial_j.$$
We will use the notation
$$\partial_0=\mathcal{L}_{e_0}=\partial_t - \mathcal{L}_{\beta},$$
where $\mathcal{L}$ is the Lie derivative. With this notation, we have the following 
expression for the second fundamental form of $\m R^2$
$$K_{ij}=-\frac{1}{2N}\partial_0 g_{ij}.$$
We will use the notation 
$$\tau=g^{ij}K_{ij}$$
for the mean curvature. We also introduce the Einstein tensor
$$\mathbf{G}_{\alpha \beta}= \mathbf{R}_{\alpha \beta} - \frac{1}{2}\mathbf{R}\mathbf{g}_{\alpha \beta},$$
where $\mathbf{R}$ is the scalar curvature $\mathbf{R} = \mathbf{g}^{\alpha \beta}\mathbf{R}_{\alpha \beta}$. 
The constraint equations are given by
\begin{align}
 \label{contrmom} \mathbf{G}_{0j} &\equiv N(\partial_j \tau - D^i K_{ij})=\partial_0 u. \partial_j u, \; j=1,2,\\
\label{contrham} \mathbf{G}_{00} & \equiv \frac{N^2}{2}(R-|K|^2+ \tau^2)= \partial_0 u. \partial_0 u - \frac{1}{2}\mathbf{g}_{00} \mathbf{g}^{\alpha \beta}\partial_\alpha u \partial_\beta u,
\end{align}
where $D$ and $R$ are respectively the covariant derivative and the scalar curvature associated to $g$ (see \cite{livrecb} chapter VI for a derivation of \eqref{contrmom} and \eqref{contrham}).
Equation \eqref{contrmom} is called the momentum constraint and \eqref{contrham} is called the Hamiltonian constraint. If we came back to the $3+1$ problem, there should be four constraint equations. However, since the fourth would be obtained by taking $\alpha=0$ in \eqref{ra3}, it is trivially satisfied if we set $\ast e^{3\gamma}F=d\omega$.

We will look for $g$ of the form $g= e^{2\lambda}\delta$ where $\delta$ is the Euclidean metric on $\mathbb{R}^2$. There is no loss of generality since, up to a diffeomorphism, all
metrics on $\mathbb{R}^2$ are conformal to the Euclidean metric. We introduce the traceless part of $K$, 
$$H_{ij} = K_{ij} - \frac{1}{2}\tau g_{ij},$$
and following \cite{choquet} we introduce the quantity
$$\dot{u} = \frac{e^{2\lambda}}{N}\partial_0 u.$$
Then the equations \eqref{contrmom} and \eqref{contrham} take the form
\begin{align}
 \label{mombis}&\partial_i H_{ij} = -\dot{u}.\partial_j u + \frac{1}{2} e^{2\lambda} \partial_j \tau,\\
\label{hambis}& \Delta \lambda + e^{-2\lambda}\left(\frac{1}{2}\dot{u}^2 +\frac{1}{2}|H|^2\right)-e^{2\lambda}\frac{\tau^2}{4} + \frac{1}{2}|\nabla u|^2 = 0,
\end{align}
where here and in the remaining of the paper, we use the convention for the Laplace operator
$$\Delta=\partial^2_1+\partial^2_2.$$
\\

The aim of this paper is to solve the coupled system of nonlinear elliptic equations \eqref{mombis} and \eqref{hambis} on $\m R^2$ in the small
data case, that is to say when $\dot{u}$ and $\nabla u$ are small. A similar system can be obtained when studying the constraint equations in three
dimensions by using the conformal method, introduced by Lichnerowicz \cite{lich} and Choquet-Bruhat and York \cite{york}.
In the constant mean curvature (CMC) case, that is to say when one sets $\tau=0$,
the constraint equations decouple and
the main difficulty that remains is the study
of the scalar equation \eqref{hambis}, also called the Lichnerowicz equation\footnote{The resolution of this equation is closely linked to the Yamabe problem}. 
The  CMC solutions have been studied in \cite{york} and \cite{isem} for the compact case, and in \cite{cantor} for the asymptotically flat case.
There have been also some results concerning the coupled constraint equations, i.e. without setting $\tau$ constant
The near CMC solutions in the asymptotically flat case have been studied in \cite{yorkcb}.
The compact case has been studied in
\cite{holst}, \cite{maxwell} and \cite{humbert}. See also \cite{bartnik} for a review of these results.

In our case, the difficulty will arise from particular issues concerning the inversion of second order elliptic operators on $\m R^2$.
In particular, without special assumptions on $u$, it is not possible to set $\tau=0$ in the case of $\m R^2$.
Indeed, equation \eqref{mombis} induces for $H$ the asymptotic $|H|^2 \sim \frac{1}{r^2}$ as $r$ tends to infinity. Now,
it is known (see \cite{ni}) that an equation of the form
$$\Delta u + Re^{2u}+f= 0,$$
with $R,f\leq 0$ and $R\lesssim -\frac{1}{r^2}$ when $r$ tends to infinity, admits no solution. Therefore, we will be forced to carefully adjust the asymptotic behaviour of $\tau$ as $r$ tends to infinity,
to compensate the term $|H|^2$ in equation \eqref{hambis}, and to ensure that we remain in the range of the elliptic operators which come into play. 

\begin{rk}\label{consing}
The solution of equation \eqref{hambis} that we construct in this paper satisfies 
\begin{equation}
\label{asym}\lambda = -\alpha \ln(r)+o(1),
\end{equation}
as $r\rightarrow \infty$, with $\alpha>0$.
At first sight, this could seem to contradict the asymptotic flatness we are looking for.
However, we mentioned in the beginning of the introduction that it is not so clear what to expect as a definition of asymptotic flatness in $2+1$
dimension. The solutions of the evolution problem \eqref{wavebis}, \eqref{wave} and \eqref{einst2} with an additional rotational symmetry
and $\omega \equiv 0$, 
known as Einstein-Rosen waves, have been studied in \cite{beck} and \cite{asht}. These solutions exhibit a conical singularity at space-like infinity, 
that is to say the perimeter of a circle of radius $r$ asymptotically grows like $2\pi c r$ with $c<1$,
instead of $2\pi r$ in the Euclidean metric. 

Using a change of variable,
we observe that the asymptotic behavior \eqref{asym}
is equivalent to the presence of an asymptotic angle at space-like infinity. 
Indeed, if we make the change of coordinate $r'=\frac{r^{1-\alpha}}{1-\alpha}$ for $r$ large enough, then the metric
$$g \sim r^{-2\alpha}(dr^2 + r^2 d\theta^2), \quad r\rightarrow \infty$$
takes the form
$$g' \sim dr'^2 + (1-\alpha)^2r'^2d\theta^2, \quad r' \rightarrow \infty$$
which corresponds to a conical singularity at space-like infinity, with an angle
given by 
$$2\pi (1-\alpha).$$
Note that, since the constraint equations \eqref{contrmom} and \eqref{contrham} are independent of the choice of
coordinates, the metric $g'$ and the second fundamental form $K'$, obtained by performing the change of variables  $r'=\frac{r^{1-\alpha}}{1-\alpha}$ for $r$ large enough,
are still solutions of the constraint equations.
\end{rk}

We will do the following rescaling to avoid the $e^{2\lambda}$
and $e^{-2\lambda}$ factors
$$\breve{u}=e^{-\lambda}\dot{u}, \quad \breve{H}=e^{-\lambda}H, \quad \breve{\tau} = e^{\lambda}\tau.$$
Then the equations \eqref{mombis} and \eqref{hambis} become
\begin{align*}
&\partial_i \breve{H}_{ij} +\breve{H}_{ij}\partial_i \lambda  = -\breve{u}.\partial_j u + \frac{1}{2} \partial_j \breve{\tau}-\frac{1}{2} \breve{\tau}\partial_j \lambda,\\
& \Delta \lambda + \frac{1}{2}\breve{u}^2 + \frac{1}{2}|\nabla u|^2+\frac{1}{2}|\breve{H}|^2-\frac{\breve{\tau}^2}{4} = 0.
\end{align*}
To lighten the notations, we will omit the $\;\breve{}\;$ in the rest of the paper.

\section{Main result}
We are interested in the system of constraint equations  on $\mathbb{R}^2$
\begin{equation}\label{sys}
\left\{
\begin{array}{l}
\partial_i H_{ij} +H_{ij}\partial_i \lambda  = -\dot{u}.\partial_j u + \frac{1}{2} \partial_j \tau-\frac{1}{2} \tau\partial_j \lambda,\\
\Delta \lambda + \frac{1}{2}\dot{u}^2 + \frac{1}{2}|\nabla u|^2+\frac{1}{2}|H|^2-\frac{\tau^2}{4} = 0. \\
\end{array} \right.
\end{equation}
We look for solutions $(H,\lambda)$ where $H$ is a $2$-tensor, symmetric and traceless, and $\lambda$ is a scalar function. The function $u$ is defined by 
\eqref{defubis} and $\tau$ is a scalar function which may be chosen arbitrarily. We recall however from the end of the previous section 
that $\tau$ must be chosen carefully. In particular, $\tau$ can not be identically $0$ otherwise there may not be solutions. In our work, some parts of $\tau$ will be imposed. These are the quantities
$$\lim_{r\rightarrow \infty} \int_0^{2\pi}\tau(r,\theta) \cos(\theta)rd\theta,  \quad\lim_{r\rightarrow \infty} \int_0^{2\pi}\tau(r,\theta) \sin(\theta)rd\theta,$$
where $(r,\theta)$ are polar coordinates.
We will work in the case where we choose data for $u$ and $\tau$ which are small.

Before stating the main theorem, we recall several properties of weighted Sobolev spaces. 

\subsection{Weighted Sobolev spaces}
In the rest of the paper, $\chi(r)$ denotes a smooth non negative function such that 
$$0 \leq \chi \leq 1, \quad  \chi(r) =0 \; \text{for}\; r \leq 1, \quad \chi(r)=1 \; \text{for} \; r \geq 2.$$
We will also note $f \lesssim h $ when there exists a universal constant $C$ such that $f\leq Ch$.

\begin{df} Let  $m\in \m N$ and $\delta \in \mathbb{R}$. The weighted Sobolev space $H^m_\delta(\mathbb{R}^n)$ is the completion of $C^\infty_0$ for the norm 
 $$\|u\|_{H^m_\delta}=\sum_{|\beta|\leq m}\|(1+|x|^2)^{\frac{\delta +|\beta|}{2}}D^\beta u\|_{L^2}.$$
The weighted Hölder space $C^m_{\delta}$ is the complete space of $m$-times continuously differentiable functions with norm 
$$\|u\|_{C^m_{\delta}}=\sum_{|\beta|\leq m}\|(1+|x|^2)^{\frac{\delta +|\beta|}{2}}D^\beta u\|_{L^\infty}.$$
Let $0<\alpha<1$. The Hölder space $C^{m+\alpha}_\delta$ is the the complete space of $m$-times continuously differentiable functions with norm 
$$\|u\|_{C^{m+\alpha}_{\delta}}=\|u\|_{C^m_\delta} + \sup_{x \neq y, \; |x-y|\leq 1} \frac{|\partial^m u(x)-\partial^m u(y)|(1+|x|^2)^\frac{\delta}{2}}{|x-y|^\alpha}.$$
\end{df}

The following lemma is an immediate consequence of the definition.

\begin{lm} Let $m \geq 1$ and $\delta \in \mathbb{R}$. Then $u \in H^m_\delta$ implies $\partial_j u \in H^{m-1}_{\delta+1}$ for $j=1,..,n$.
\end{lm}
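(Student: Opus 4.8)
The claim is that if $u \in H^m_\delta$ then $\partial_j u \in H^{m-1}_{\delta+1}$, and the stated proof should be essentially immediate from the definitions. Let me sketch it.

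The key observation is just to compare the two norms term by term. For $u \in H^m_\delta$ we have
$$\|u\|_{H^m_\delta} = \sum_{|\beta|\le m} \|(1+|x|^2)^{\frac{\delta+|\beta|}{2}} D^\beta u\|_{L^2},$$
while
$$\|\partial_j u\|_{H^{m-1}_{\delta+1}} = \sum_{|\gamma|\le m-1} \|(1+|x|^2)^{\frac{(\delta+1)+|\gamma|}{2}} D^\gamma \partial_j u\|_{L^2}.$$
Now for each multi-index $\gamma$ with $|\gamma|\le m-1$, the term $D^\gamma \partial_j u$ equals $D^\beta u$ for the multi-index $\beta = \gamma + e_j$, which has $|\beta| = |\gamma|+1 \le m$. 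Moreover the weight exponent matches exactly: $(\delta+1) + |\gamma| = \delta + (|\gamma|+1) = \delta + |\beta|$. Hence every summand in $\|\partial_j u\|_{H^{m-1}_{\delta+1}}$ is one of the summands in $\|u\|_{H^m_\delta}$, so
$$\|\partial_j u\|_{H^{m-1}_{\delta+1}} \le \|u\|_{H^m_\delta} < \infty,$$
and in particular $\partial_j u \in H^{m-1}_{\delta+1}$.

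Strictly speaking, since the spaces are defined as completions of $C^\infty_0$, one should note that differentiation $\partial_j : C^\infty_0 \to C^\infty_0$ is well-defined and, by the inequality above, bounded for the relevant norms; it therefore extends continuously to the completions, giving the stated map $H^m_\delta \to H^{m-1}_{\delta+1}$ and respecting the identification of $u$ with its weak derivatives. There is no real obstacle here — the only thing to be slightly careful about is the bookkeeping of weight exponents, which is exactly arranged so that raising the weight index by $1$ compensates for lowering the differentiability index by $1$. This is why the lemma is labelled "an immediate consequence of the definition."
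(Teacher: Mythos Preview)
Your proof is correct and matches the paper's approach: the paper simply states that the lemma is ``an immediate consequence of the definition'' and gives no further argument, which is exactly the term-by-term comparison of norms you have spelled out. Your additional remark about passing to completions is a reasonable point of rigor that the paper omits.
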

We first recall the Sobolev embedding with weights (see for example \cite{livrecb}, Appendix I). In the rest of this section, we assume $n=2$.

\begin{prp}\label{holder} Let $s,m \in \m N$. We assume $s >1$. Let $\beta \leq \delta +1$ and $0<\alpha<min(1,s-1)$. Then, we have the continuous embedding
$$H^{s+m}_{\delta}\subset C^{m+\alpha}_{\beta}.$$
\end{prp}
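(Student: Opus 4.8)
The plan is to reduce to the case $m=0$ by differentiation, and then to establish that case by rescaling each dyadic annulus of $\m R^2$ onto a fixed annulus where the ordinary Sobolev--Morrey embedding applies. For the reduction, suppose $H^s_\delta\subset C^\alpha_\beta$ is already known whenever $s>1$, $\beta\le\delta+1$ and $0<\alpha<\min(1,s-1)$. Given $u\in H^{s+m}_\delta$, the preceding lemma (applied $|\gamma|$ times, with its ``$m$'' equal to $s+m$) gives $D^\gamma u\in H^{s+m-|\gamma|}_{\delta+|\gamma|}\subset H^s_{\delta+|\gamma|}$ for every multi-index $|\gamma|\le m$, and since $\beta\le\delta+1$ forces $\beta+|\gamma|\le(\delta+|\gamma|)+1$, the $m=0$ case applied to $D^\gamma u$ with base weight $\delta+|\gamma|$ and target weight $\beta+|\gamma|$ gives
$$\|(1+|x|^2)^{\frac{\beta+|\gamma|}{2}}D^\gamma u\|_{L^\infty}\lesssim\|D^\gamma u\|_{H^s_{\delta+|\gamma|}}\lesssim\|u\|_{H^{s+m}_\delta}.$$
Summing over $|\gamma|\le m$ controls $\|u\|_{C^m_\beta}$, and taking $|\gamma|=m$ also controls the weighted Hölder seminorm of $\partial^m u$ with weight $(1+|x|^2)^{\frac{\beta+m}{2}}$, hence a fortiori with the smaller weight $(1+|x|^2)^{\frac{\beta}{2}}$ appearing in $\|\cdot\|_{C^{m+\alpha}_\beta}$. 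So it suffices to treat $m=0$.

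For $m=0$, write $\m R^2=B\cup\bigcup_{k\ge0}A_k$ with $B=\{|x|\le2\}$ and $A_k=\{2^k\le|x|\le2^{k+2}\}$, a cover with bounded overlap. On the bounded region $B$ all weights are $\sim1$, so the ordinary embedding $H^s(B)\subset C^\alpha(B)$, valid in dimension two precisely for $0<\alpha<s-1$, gives the bound there. On $A_k$ set $x=2^ky$, so that $y$ ranges over the fixed annulus $A=\{1\le|y|\le4\}$ and $\tilde u(y):=u(2^ky)$. The $L^\infty$ norm is scale invariant, the $C^\alpha$-seminorm in $y$ equals $2^{k\alpha}$ times the one in $x$, and since $(1+|x|^2)^{1/2}\sim2^k$ on $A_k$ while $D^\gamma_y\tilde u=2^{k|\gamma|}(D^\gamma u)(2^ky)$, the powers $2^{k|\gamma|}$ cancel exactly the power of the weight carried by each derivative, leaving
$$\|\tilde u\|_{H^s(A)}\lesssim 2^{-k(\delta+1)}\|u\|_{H^s_\delta(A_k)}$$
(the exponent being $\delta+\frac{n}{2}$ with $n=2$). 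Plugging this into $H^s(A)\subset C^\alpha(A)$ and undoing the rescaling yields, on $A_k$, $\|u\|_{L^\infty(A_k)}\lesssim 2^{-k(\delta+1)}\|u\|_{H^s_\delta(A_k)}$ and $[u]_{C^\alpha(A_k)}\lesssim 2^{-k\alpha}2^{-k(\delta+1)}\|u\|_{H^s_\delta(A_k)}$.

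It remains to restore the weights and sum. On $A_k$ one has $(1+|x|^2)^{\beta/2}\sim2^{k\beta}$, so multiplying the sup-norm estimate by this weight produces the factor $2^{k(\beta-\delta-1)}$, which is $\le1$ exactly because $\beta\le\delta+1$; taking the supremum over $k$ and using $\sup_k a_k\le(\sum_k a_k^2)^{1/2}$ together with the bounded overlap of the $A_k$ and the $B$-estimate gives $\|(1+|x|^2)^{\beta/2}u\|_{L^\infty(\m R^2)}\lesssim\|u\|_{H^s_\delta}$. The weighted Hölder seminorm is a local object, comparing values at points lying in a common dyadic shell; there the same computation, now carrying the extra favorable factor $2^{-k\alpha}$, bounds the contribution by $2^{k(\beta-\delta-1-\alpha)}\|u\|_{H^s_\delta(A_k)}$, uniformly in $k$ since $\beta-\delta-1-\alpha<0$, and summing as before completes the proof of the $m=0$ case.

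I expect the only genuine work to be the bookkeeping of the powers of $2^k$ under the rescaling, arranged so that the single hypothesis $\beta\le\delta+1$ is precisely what makes each geometric series converge, together with the choice of an overlapping dyadic cover so that the (local) Hölder difference quotient is always estimated within one rescaled annulus on which the fixed-domain Morrey embedding $H^s(A)\subset C^\alpha(A)$ --- valid exactly for $0<\alpha<s-1$ in dimension two, which is why one assumes $s>1$ --- may be invoked. There is no borderline difficulty, since $\alpha>0$ leaves strict slack in all the weight inequalities.
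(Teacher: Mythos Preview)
The paper does not prove this proposition; it is stated as a standard fact with a reference to Appendix~I of \cite{livrecb}. Your dyadic--rescaling argument is exactly the standard route to such weighted embeddings, and the bookkeeping you give for the reduction to $m=0$ and for the same-shell estimates (the scaling identity $\|\tilde u\|_{H^s(A)}\lesssim 2^{-k(\delta+1)}\|u\|_{H^s_\delta(A_k)}$ and the appeal to $H^s(A)\subset C^\alpha(A)$ for $0<\alpha<s-1$) is correct.

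There is one genuine gap. You assert that ``the weighted H\"older seminorm is a local object, comparing values at points lying in a common dyadic shell,'' but with the paper's definition the seminorm is global: the supremum runs over \emph{all} pairs $x\neq y$, with the weight $(1+|x|^2)^{\beta/2}$ placed at $x$ only. The far-apart case must be treated separately, and in fact with the paper's literal definition the embedding can fail once $\beta>\alpha$: any $u\in C^\infty_0$ with $u(0)\neq0$ lies in every $H^s_\delta$, yet taking $y=0$ and $|x|\to\infty$ outside the support of $u$ gives a H\"older quotient $\sim|u(0)|\,|x|^{\beta-\alpha}\to\infty$. This is not a flaw in your strategy but rather in the paper's normalization of the $C^{m+\alpha}_\delta$ seminorm; the usual conventions either restrict the supremum to $|x-y|\lesssim 1+|x|$ or carry the weight $(1+|x|^2)^{(\delta+m+\alpha)/2}$ on the H\"older part, and under either of those your argument closes without further work.
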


We will also need a product rule.

\begin{prp}\label{produit}
 Let $s,s_1,s_2 \in \m N$. We assume $s\leq \min(s_1,s_2)$ and $s<s_1+s_2-1$. Let $\delta<\delta_1+\delta_2+1$. Then $\forall (u,v) \in H^{s_1}_{\delta_1}\times H^{s_2}_{\delta_2}$,
$$\|uv\|_{H^s_\delta} \lesssim \|u\|_{H^{s_1}_{\delta_1}} \|v\| _{H^{s_2}_{\delta_2}}.$$
\end{prp}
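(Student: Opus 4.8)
The plan is to reduce the weighted estimate to the classical (unweighted) Sobolev multiplication theorem on $\mathbb{R}^2$ by absorbing the polynomial weights into the functions. Write $\langle x\rangle = (1+|x|^2)^{1/2}$. The key observation is that for a multi-index $\beta$ with $|\beta|\le s$, the Leibniz rule gives
$$\langle x\rangle^{\delta+|\beta|} D^\beta(uv) = \sum_{\beta_1+\beta_2=\beta} \binom{\beta}{\beta_1}\left(\langle x\rangle^{\delta_1+|\beta_1|}D^{\beta_1}u\right)\left(\langle x\rangle^{\delta_2+|\beta_2|}D^{\beta_2}v\right)\langle x\rangle^{\delta-\delta_1-\delta_2-|\beta_1|-|\beta_2|+|\beta|}.$$
Since $|\beta_1|+|\beta_2|=|\beta|$, the leftover weight exponent is exactly $\delta-\delta_1-\delta_2$, which is negative by hypothesis ($\delta<\delta_1+\delta_2+1$ — in fact I will first treat the clean case $\delta\le\delta_1+\delta_2$, then note the endpoint gain below), so $\langle x\rangle^{\delta-\delta_1-\delta_2}$ is bounded. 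Hence it suffices to bound, in $L^2$, each product $(\langle x\rangle^{\delta_1+|\beta_1|}D^{\beta_1}u)(\langle x\rangle^{\delta_2+|\beta_2|}D^{\beta_2}v)$ with $|\beta_1|+|\beta_2|\le s$.

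The next step is to interpret $\langle x\rangle^{\delta_1+|\beta_1|}D^{\beta_1}u$ and its relatives as components of the norms $\|u\|_{H^{s_1}_{\delta_1}}$, $\|v\|_{H^{s_2}_{\delta_2}}$. Set $a_k = \sum_{|\beta_1|=k}\|\langle x\rangle^{\delta_1+k}D^{\beta_1}u\|_{L^2}$ and similarly $b_\ell$ for $v$, so that $\|u\|_{H^{s_1}_{\delta_1}}=\sum_{k\le s_1}a_k$ and $\|v\|_{H^{s_2}_{\delta_2}}=\sum_{\ell\le s_2}b_\ell$. For a product with $k+\ell\le s$, one of the two factors has low order: since $s\le\min(s_1,s_2)$ and $s<s_1+s_2-1$, either $k\le s-1<s_1-1$ or $\ell\le s-1<s_2-1$ (if both were $\ge$ the complementary threshold the orders would sum to more than $s$). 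Say $\ell$ is the low one; then $H^{s_2}_{\delta_2}$-regularity with $s_2-\ell\ge 2$ derivatives to spare, combined with the weighted Sobolev embedding of Proposition \ref{holder} (with $n=2$, using $s_2-\ell>1$), gives $\langle x\rangle^{\delta_2+\ell}D^{\beta_2}v\in C^0_{0}\subset L^\infty$ — more precisely $\|\langle x\rangle^{\delta_2+\ell+\eta}D^{\beta_2}v\|_{L^\infty}\lesssim\|v\|_{H^{s_2}_{\delta_2}}$ for a small $\eta\ge 0$. Pairing the $L^\infty$ factor with the $L^2$ factor $\langle x\rangle^{\delta_1+k}D^{\beta_1}u$ and noting the weight exponents again add up correctly, we get the bound $\lesssim\|u\|_{H^{s_1}_{\delta_1}}\|v\|_{H^{s_2}_{\delta_2}}$; the symmetric case is identical.

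The main obstacle is the borderline/low-regularity combinations where the naive $L^\infty\times L^2$ split is not quite available — concretely when $s_2-\ell$ is exactly large enough for embedding but one needs to be careful that $\alpha$ in Proposition \ref{holder} can be chosen positive, and the genuinely critical case $s_1=s_2=s+ (\text{small})$ where both factors sit at intermediate regularity and one must instead interpolate (e.g.\ use $L^4\times L^4$ via the Gagliardo–Nirenberg inequality $\|w\|_{L^4}^2\lesssim\|w\|_{L^2}\|\nabla w\|_{L^2}$ on $\mathbb{R}^2$, applied to the weighted quantities after checking that differentiating a weighted function only improves the weight by the Leibniz rule and Lemma \ref{...} preceding Proposition \ref{holder}). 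This is also exactly where the strict inequality $\delta<\delta_1+\delta_2+1$ is used rather than $\delta\le\delta_1+\delta_2$: the extra unit of weight decay is spent either to make the leftover power $\langle x\rangle^{\delta-\delta_1-\delta_2}$ lie in $L^{2+}$ (so it can be paired off against an $L^\infty$-with-$\eta$-decay factor, upgrading $L^2\times L^\infty$ at the threshold to an honest estimate), or to absorb the loss in the Gagliardo–Nirenberg step. I would organize the proof as: (i) Leibniz reduction as above; (ii) a case split on which factor is low-order; (iii) the easy case via weighted Sobolev embedding $+$ Hölder; (iv) the critical case via weighted Gagliardo–Nirenberg, this last being the step that actually requires the full strength of the hypotheses.
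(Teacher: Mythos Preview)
The paper does not prove this proposition; it is stated without proof as a standard fact (like the weighted Sobolev embedding of Proposition~\ref{holder} immediately above it, which is referenced to \cite{livrecb}). So there is no paper proof to compare against.

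Your outline is the standard argument and is essentially correct. Two refinements are worth making explicit. First, the source of the ``$+1$'' in the weight hypothesis is exactly the gain $\beta\le\delta+1$ in the embedding of Proposition~\ref{holder}: when the low-order factor $D^{\beta_2}v$ has $s_2-\ell>1$ derivatives to spare, Proposition~\ref{holder} gives $\langle x\rangle^{\delta_2+\ell+1}D^{\beta_2}v\in L^\infty$ (not merely with a ``small $\eta\ge 0$''), so after pairing with the $L^2$ factor the leftover weight is $\langle x\rangle^{\delta-\delta_1-\delta_2-1}$, bounded precisely under $\delta<\delta_1+\delta_2+1$. Second, the genuinely critical case is $s=s_1+s_2-2$ with $k=s_1-1$, $\ell=s_2-1$, where each factor has only $H^1$ remaining; the weighted $H^1\hookrightarrow L^4$ embedding in two dimensions gains half a unit of weight per factor, and $L^4\times L^4\subset L^2$ again recovers the full ``$+1$''. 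Your dichotomy in step~(ii) is slightly misstated --- the correct split is ``$s_1-k>1$ or $s_2-\ell>1$'', which fails only in this single critical configuration --- but the intended argument goes through.
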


The following simple lemma will be useful as well.

\begin{lm}\label{produit2}
Let $\alpha \in \mathbb{R}$ and $g \in L^\infty_{loc}$ be such that
$$|g(x)| \lesssim (1+|x|^2)^\alpha.$$
Then the multiplication by $g$ maps $H^0_{\delta}$ to $H^0_{\delta -2\alpha}$.
\end{lm}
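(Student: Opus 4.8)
The plan is to unwind the definition of the weighted $L^2$ norm and simply track the powers of $(1+|x|^2)$. Given $u\in H^0_\delta$, I want to estimate
$$\|gu\|_{H^0_{\delta-2\alpha}} = \|(1+|x|^2)^{\frac{\delta-2\alpha}{2}} gu\|_{L^2}.$$
Inserting the hypothesis $|g(x)|\lesssim (1+|x|^2)^\alpha$ pointwise, the integrand is dominated by $(1+|x|^2)^{\frac{\delta-2\alpha}{2}+\alpha}|u(x)| = (1+|x|^2)^{\frac{\delta}{2}}|u(x)|$, because the exponents add up to exactly $\delta/2$. Taking $L^2$ norms then yields $\|gu\|_{H^0_{\delta-2\alpha}} \lesssim \|(1+|x|^2)^{\frac{\delta}{2}}u\|_{L^2} = \|u\|_{H^0_\delta}$, which is precisely the asserted mapping property.

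A couple of minor points round out the argument. The inequality is first established for $u\in C^\infty_0$; since $H^0_\delta$ is by definition the completion of $C^\infty_0$ in its norm and multiplication by the fixed function $g$ is linear, the bound extends by density to all of $H^0_\delta$, and the same inequality applied to an approximating sequence shows that $gu$ indeed belongs to $H^0_{\delta-2\alpha}$. The assumption $g\in L^\infty$ is used only to ensure that $gu$ is a well-defined (measurable) function; the quantitative estimate relies solely on the pointwise growth bound.

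I do not expect any real obstacle here: the entire content of the lemma is the bookkeeping identity $\frac{\delta-2\alpha}{2}+\alpha=\frac{\delta}{2}$, so the only thing to be careful about is that the weight in the target space is shifted by $-2\alpha$ (matching the quadratic nature of $(1+|x|^2)$ in the weight) and not by $-\alpha$.
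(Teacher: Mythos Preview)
Your proof is correct and is exactly the intended argument: the paper itself does not supply a proof of this lemma, calling it a ``simple lemma'' and leaving it implicit, since it follows directly from the definition of the $H^0_\delta$ norm and the exponent identity $\frac{\delta-2\alpha}{2}+\alpha=\frac{\delta}{2}$ that you wrote down. There is nothing to add.
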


We have the following theorem due to McOwen (see \cite{laplacien})

\begin{thm}\label{laplacien}(Theorem 0 in \cite{laplacien})
 Let $m\in \mathbb{N}$ and $-1+m<\delta<m$. The Laplace operator $\Delta :H^2_\delta \rightarrow H^0_{\delta+2}$ is an injection with closed range 
$$\left \{f \in H^0_{\delta+2}\; | \;\int fv =0 \quad \forall v \in \cup_{i=0}^m \mathcal{H}_i \right \},$$
where $\mathcal{H}_i$ is the set of harmonic polynomials of degree $i$.
Moreover, $u$ obeys the estimate
$$\|u\|_{H^2_{\delta}} \leq C(\delta)\|\Delta u\|_{H^0_{\delta+2}},$$
where $C(\delta)$ is a constant such that $C(\delta) \rightarrow +\infty$ when $\delta \rightarrow m_{-}$ and  $\delta \rightarrow (-1+m)_{+}$.
\end{thm}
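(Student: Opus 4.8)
\emph{Plan of proof.} This is a classical Fredholm-type statement for the Laplacian in weighted Sobolev spaces, due to Mac Owen; I would recover it in five steps. First, boundedness of $\Delta:H^2_\delta\to H^0_{\delta+2}$ is immediate from the definition of the weighted norms (each derivative raises the weight index by one). Second, and this is the technical core, I would establish the semi-Fredholm a priori estimate
\begin{equation}\label{aprioriplan}
\|u\|_{H^2_\delta}\;\le\; C\bigl(\|\Delta u\|_{H^0_{\delta+2}}+\|u\|_{L^2(\{|x|\le 2\})}\bigr),\qquad u\in H^2_\delta .
\end{equation}
The mechanism is separation of variables: on $\m R^2\setminus\{0\}$ one has $\Delta=\partial_r^2+r^{-1}\partial_r+r^{-2}\partial_\theta^2$, and expanding in the Fourier modes $e^{ik\theta}$ the $k$-th mode operator has homogeneous solutions $r^{\pm|k|}$ for $k\neq 0$ and $1,\ln r$ for $k=0$; hence the indicial roots are exactly the integers. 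The hypothesis $-1+m<\delta<m$ says precisely that $\delta$ lies in the $m$-th connected component of $\m R\setminus\m Z$ and is therefore non-exceptional, which is what lets one solve the model ODE on each dyadic annulus $\{2^j\le|x|\le 2^{j+1}\}$ with a constant uniform in $j$; patching these with interior elliptic estimates and summing against the weight yields \eqref{aprioriplan}.

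Third, \eqref{aprioriplan} together with the local Rellich theorem (the restriction $H^2_\delta\to L^2(\{|x|\le 2\})$ is compact) gives, by the standard semi-Fredholm criterion, that $\Delta$ has finite-dimensional kernel and closed range. Fourth, I would identify the kernel and the cokernel. Kernel: if $u\in H^2_\delta$ and $\Delta u=0$, then since $\delta>-1+m\ge -1$ Proposition \ref{holder} yields $u\in C^{0+\alpha}_\beta$ for some $\beta>0$, so $u(x)\to 0$ as $|x|\to\infty$; a bounded entire harmonic function on $\m R^2$ tending to $0$ is identically zero by Liouville, so $\Delta$ is injective. Cokernel: with the $L^2$ pairing one has $(H^0_{\delta+2})^\ast=H^0_{-\delta-2}$, and an element of the cokernel is a $v\in H^0_{-\delta-2}$ with $\Delta v=0$ in the distributional sense; by elliptic regularity $v$ is smooth and harmonic, and membership in $H^0_{-\delta-2}$ forces, via the subharmonicity of $|v|^2$ and a rescaled mean-value bound, $|v(x)|\lesssim|x|^{\delta+1}$, so $v$ is a harmonic polynomial of degree $<\delta+1$, i.e.\ of degree $\le m$; conversely every $P\in\cup_{i=0}^m\mathcal H_i$ does belong to $H^0_{-\delta-2}$ precisely because $\deg P\le m<\delta+1$. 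Hence the range of $\Delta$ is the annihilator $\{f\in H^0_{\delta+2}:\int fv=0\ \ \forall v\in\cup_{i=0}^m\mathcal H_i\}$, as stated.

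Fifth, once $\Delta$ is injective with closed range it is a Banach space isomorphism onto that range, so the open mapping theorem yields $\|u\|_{H^2_\delta}\le C(\delta)\|\Delta u\|_{H^0_{\delta+2}}$; the divergence $C(\delta)\to\infty$ as $\delta\to m_-$ and as $\delta\to(-1+m)_+$ is already visible in the second step, since the constant produced by the dyadic summation degenerates when the weight $\delta$ approaches the nearest integer indicial root and the associated spectral gap closes (equivalently, new resonant homogeneous solutions are about to enter the problem at each endpoint).

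The main obstacle is the second step: the scaled elliptic estimate \eqref{aprioriplan}. It is there that the arithmetic of the indicial roots $\m Z$ against the weight $\delta$ enters, and where one must control the model resolvent on infinitely many dyadic annuli with a uniform constant while keeping exact track of the weight. Once \eqref{aprioriplan} and a local compactness are in hand, the remainder -- injectivity via Liouville, the cokernel computation, and the final estimate from the open mapping theorem -- is comparatively soft.
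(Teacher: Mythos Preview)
The paper does not prove this theorem: it is stated as a direct quotation of Theorem~0 in McOwen~\cite{laplacien} and then used as a black box to derive Corollaries~\ref{coro} and~\ref{regu}. There is therefore no ``paper's own proof'' to compare your proposal against.

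For what it is worth, your five-step outline is a sound and standard route to this kind of result (boundedness from the definition; a scaled a~priori estimate coming from the indicial-root analysis $r^{\pm|k|}$, $1$, $\ln r$ on each Fourier mode, with the non-exceptional condition $\delta\notin\m Z$ ensuring a uniform dyadic constant; semi-Fredholm from local Rellich compactness; kernel triviality by Liouville; cokernel identification via the duality $(H^0_{\delta+2})^\ast=H^0_{-\delta-2}$ and the polynomial Liouville theorem; and the final estimate from the open mapping theorem, with the blow-up of $C(\delta)$ at the endpoints $m$ and $m-1$ reflecting the closing spectral gap). This is essentially the strategy of McOwen's original article, so your sketch is faithful to the source the paper cites.
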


\begin{cor} \label{coro} Let $-1<\delta<0$ and $f\in H^0_{\delta+2}$. Then there exists a solution $u$ of 
$$\Delta u =f$$
 which can be written 
$$u=\frac{1}{2\pi}\left(\int f\right)\chi(r)\ln(r) +v,$$
where $v \in H^2_{\delta}$ is such that
$\|v\|_{H^2_\delta} \leq C(\delta)\|f\|_ {H^0_{ \delta+2}}$.
\end{cor}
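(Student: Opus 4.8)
The plan is to invoke Mac Owen's theorem (Theorem~\ref{laplacien}) with $m=0$, after subtracting off an explicit logarithmic term to enforce the solvability condition. Indeed, for $-1<\delta<0$ the relevant index in Theorem~\ref{laplacien} is $m=0$, so $\mathcal{H}_0$ consists of the constant functions and $\Delta\colon H^2_\delta\to H^0_{\delta+2}$ has closed range exactly $\{f\in H^0_{\delta+2}\colon\int_{\m R^2}f=0\}$. Hence if $\int f\neq 0$ the equation $\Delta u=f$ cannot be solved with $u\in H^2_\delta$; this is consistent with the fact that $\chi(r)\ln r\notin H^2_\delta$ precisely when $\delta>-1$, so a solution must carry a genuine $\ln r$ tail.

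First I would record the elementary properties of $\chi(r)\ln r$. Since $\ln r$ is harmonic on $\m R^2\setminus\{0\}$ and $\chi$ is locally constant off the annulus $\{1\le r\le 2\}$, we have $\Delta(\chi\ln r)=(\Delta\chi)\ln r+2\nabla\chi\cdot\nabla\ln r$, which is smooth and compactly supported in $\{1\le r\le 2\}$; in particular it lies in $H^0_\sigma$ for every $\sigma\in\m R$. By the divergence theorem, for any $R>2$,
\[
\int_{\m R^2}\Delta(\chi\ln r)\,dx=\int_{\partial B_R}\partial_r(\ln r)\,ds=\frac1R\,2\pi R=2\pi.
\]
Given $f\in H^0_{\delta+2}$, set $c:=\tfrac1{2\pi}\int_{\m R^2}f\,dx$ and $\tilde f:=f-c\,\Delta(\chi\ln r)$. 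Then $\tilde f\in H^0_{\delta+2}$ and $\int_{\m R^2}\tilde f=0$, so Theorem~\ref{laplacien} provides $v\in H^2_\delta$ with $\Delta v=\tilde f$ and $\|v\|_{H^2_\delta}\le C(\delta)\,\|\tilde f\|_{H^0_{\delta+2}}$. Then $u:=c\,\chi(r)\ln r+v$ solves $\Delta u=f$ and is of the announced form (with the coefficient of $\chi\ln r$ equal to $\tfrac1{2\pi}\int f$, the normalization forced by the computation above).

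Finally I would estimate $\tilde f$. Since $\Delta(\chi\ln r)$ is a fixed smooth compactly supported function, $\|\tilde f\|_{H^0_{\delta+2}}\le\|f\|_{H^0_{\delta+2}}+|c|\,\|\Delta(\chi\ln r)\|_{H^0_{\delta+2}}$, so it suffices to bound $|c|$. By Cauchy--Schwarz,
\[
\Big|\int_{\m R^2}f\,dx\Big|\le\big\|(1+|x|^2)^{-\frac{\delta+2}{2}}\big\|_{L^2(\m R^2)}\,\|f\|_{H^0_{\delta+2}},
\]
and $(1+|x|^2)^{-\frac{\delta+2}{2}}\in L^2(\m R^2)$ exactly because $2(\delta+2)>2\Leftrightarrow\delta>-1$. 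Absorbing constants yields $\|v\|_{H^2_\delta}\le C(\delta)\|f\|_{H^0_{\delta+2}}$, as claimed. The step requiring the most care is this last one: $\delta>-1$ is precisely what makes $f\mapsto\int_{\m R^2}f$ a bounded functional on $H^0_{\delta+2}$ (so that $c$ is controlled), while $\delta<0$ keeps us in the range $m=0$ of Theorem~\ref{laplacien}, where vanishing of $\int f$ is the only obstruction --- for $\delta\ge 0$ one would additionally have to correct for the degree-one harmonic polynomials $x_1,x_2$.
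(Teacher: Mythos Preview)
Your proof is correct and follows the same core strategy as the paper: subtract a compactly supported correction so that the mean-zero solvability condition of Theorem~\ref{laplacien} is met, then invoke that theorem. The paper's version is slightly less direct: it fixes an arbitrary bump $\Theta$ with $\int\Theta=1$, solves $\Delta u_0=\Theta$ via the Newtonian potential, argues that $u_0=\chi\ln r+\widetilde{u_0}$ with $\widetilde{u_0}\in H^2_\delta$, and then takes $v=(\int f)\widetilde{u_0}+\widetilde v$ where $\Delta\widetilde v=f-(\int f)\Theta$. Your choice amounts to taking $\Theta=\tfrac{1}{2\pi}\Delta(\chi\ln r)$, for which $u_0=\tfrac{1}{2\pi}\chi\ln r$ exactly and $\widetilde{u_0}=0$, so the intermediate asymptotic-expansion step disappears; this is a clean simplification.

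One point worth flagging: your divergence-theorem computation correctly gives the coefficient of $\chi\ln r$ as $\tfrac{1}{2\pi}\int f$, not $\int f$ as the corollary is stated. The paper's own proof carries the same slip (it writes $u_0(x)=\int\ln|x-y|\,\Theta(y)\,dy$ as the solution of $\Delta u_0=\Theta$, whereas the 2D fundamental solution is $\tfrac{1}{2\pi}\ln|x|$), so the discrepancy lies in the statement rather than in your argument. This constant is immaterial for the uses of the corollary later in the paper.
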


\begin{proof}
 Let $\Theta$ be a smooth function supported in $B(0,1)$ such that $\int \Theta = 2\pi$. Let $u_0$ be defined by 
$$u_0 (x) = \frac{1}{2\pi}\int \ln(|x-y|)\Theta (y) dy.$$
Then $u_0$ is a solution of $\Delta u_0=\Theta$.
We can write, for $|x|\geq 2$
$$u_0-\ln(|x|)=\frac{1}{2\pi}\int_{|y|\leq 1} \Theta(y)(\ln(|x-y)-\ln(|x|))dy,$$
therefore we have, for $|x|\geq 2$
$$|u_0-\ln(|x|)|\leq \frac{C(\Theta)}{|x|}.$$
Moreover,
$$\partial^i u_0-\partial^i\ln(|x|)=\frac{1}{2\pi}\int_{|y|\leq 1} \Theta(y)\partial^i(\ln(|x-y)-\ln(|x|))dy$$
and therefore, for $|x|\geq 2$
$$|\partial^i u_0-\partial^i\ln(|x|)|\leq \frac{C(\Theta)}{|x|^i}.$$
Besides, for $|x|\leq 2$, since $\ln$ is locally integrable, we have
$$\partial^i u_0 =  \frac{1}{2\pi}\int \ln(|y|)\partial^i\Theta (x-y) dy,$$
and therefore, for $|x|\leq 2$
$$|\partial^i u_0|\leq C(\Theta).$$
Consequently, we can write
$$u_0(x) = \chi(|x|)\ln(|x|) +\widetilde{u_0},$$
with $\widetilde{u_0} \in H^2_{\delta}$.
Theorem \ref{laplacien} implies that there exists $\widetilde{v} \in H^2_{\delta}$ solution of
$$\Delta \widetilde{v} = f-\frac{1}{2\pi}\left( \int f \right) \Theta.$$
Therefore 
$$ u = \frac{1}{2\pi}\left( \int f \right)\chi(r)\ln(r) +\frac{1}{2\pi}\left( \int f \right)\widetilde{u_0} + \widetilde{v}$$
is a solution of $\Delta u = f$.
To obtain the estimate of the corollary, it suffices to note that, for $f\in H^0_{\delta+2}$ we have
$$\int |f| = \int |f|\frac{(1+r^2)^{\frac{\delta}{2} + 1}}{(1+r^2)^{\frac{\delta}{2} + 1}} \lesssim \frac{1}{\sqrt{1+\delta}} \|f\|_{H^0_{\delta+2}}.$$
\end{proof}

\begin{cor}\label{regu}
 Let $s,m \in \mathbb{N}$ and $-1+m < \delta < m$. The Laplace operator $\Delta :H^{2+s}_\delta \rightarrow H^s_{\delta+2}$ is an injection with closed range 
$$\left \{f \in H^s_{\delta+2}\; | \; \int fv =0 \quad \forall v \in \cup_{i=0}^m \mathcal{H}_i \right \}.$$
Moreover, $u$ obeys the estimate
$$\|u\|_{H^{s+2}_{\delta}} \leq C(s,\delta)\|\Delta u\|_{H^s_{\delta+2}}.$$
\end{cor}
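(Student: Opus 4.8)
The plan is to bootstrap the case $s=0$, which is exactly Theorem \ref{laplacien}, by an induction on $s$ resting on standard interior elliptic estimates on dyadic annuli, rescaled so as to reproduce the weights. Two of the three assertions will be essentially formal, and the real work is the weighted a priori estimate.

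First the easy parts. Injectivity of $\Delta$ on $H^{2+s}_\delta$ follows at once from the inclusion $H^{2+s}_\delta\subset H^2_\delta$ together with the injectivity statement of Theorem \ref{laplacien}. For the range, I would note that each functional $f\mapsto\int fv$ with $v\in\mathcal{H}_i$, $i\le m$, is continuous on $H^s_{\delta+2}$: since $|v(x)|\lesssim(1+|x|^2)^{m/2}$ and $-1+m<\delta$, Cauchy--Schwarz gives
$$\int|fv|\lesssim\left\|(1+|x|^2)^{\frac{\delta+2}{2}}f\right\|_{L^2}\left\|(1+|x|^2)^{\frac{m-\delta-2}{2}}\right\|_{L^2}\lesssim\|f\|_{H^s_{\delta+2}},$$
the second factor being finite precisely because $-1+m<\delta$ (so that $2(m-\delta-2)<-2$). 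Hence the subspace displayed in the statement is closed in $H^s_{\delta+2}$, being the common kernel of finitely many bounded functionals, and it contains the range of $\Delta$ by the $s=0$ case applied through $H^{2+s}_\delta\subset H^2_\delta$ and $H^s_{\delta+2}\subset H^0_{\delta+2}$.

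The content of the corollary is the reverse inclusion and the estimate. Given $f$ in that subspace, Theorem \ref{laplacien} already produces $u\in H^2_\delta$ with $\Delta u=f$ and $\|u\|_{H^2_\delta}\le C(\delta)\|f\|_{H^0_{\delta+2}}\le C(\delta)\|f\|_{H^s_{\delta+2}}$, and the interior elliptic estimate makes $u$ lie in $H^{2+s}_{\mathrm{loc}}$. It then suffices to prove, by induction on $s$, the weighted a priori bound $\|u\|_{H^{2+s}_\delta}\le C(s,\delta)\bigl(\|\Delta u\|_{H^s_{\delta+2}}+\|u\|_{H^2_\delta}\bigr)$. To obtain it, cover $\mathbb{R}^2$ by a fixed ball around the origin together with the dyadic annuli $A_k=\{2^k\le|x|\le 2^{k+1}\}$, and on a slightly enlarged annulus $\tilde A_k$ apply the standard unweighted interior estimate $\|u\|_{H^{2+s}(A_k)}\lesssim\|\Delta u\|_{H^s(\tilde A_k)}+\|u\|_{L^2(\tilde A_k)}$ with constant independent of $k$ (seen by rescaling $\tilde A_k$ to the unit annulus). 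On $A_k$ one has $(1+|x|^2)^{1/2}\sim 2^k$, and the change of variables $x=2^ky$ turns the weight $(1+|x|^2)^{(\delta+|\beta|)/2}$ together with the measure factor $dx=2^{2k}dy$ into a single power $2^{k(\delta+1)}$ on \emph{every} term, independently of $|\beta|$; the same computation rewrites the $A_k$-piece of $\|\Delta u\|_{H^s_{\delta+2}}$ and of $\|u\|_{H^0_\delta}$ in terms of $\|\Delta v\|_{H^s}$ and $\|v\|_{L^2}$ up to matching powers of $2^k$. Multiplying the rescaled interior estimate by $2^{2k(\delta+1)}$ and summing over $k$, using the finite overlap of the $\tilde A_k$, recombines the dyadic pieces into the weighted norms with no loss and yields the a priori bound; together with the estimate from Theorem \ref{laplacien} this gives $\|u\|_{H^{2+s}_\delta}\le C(s,\delta)\|f\|_{H^s_{\delta+2}}$. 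Since the rescaling is independent of $\delta$, the blow-up of $C(s,\delta)$ as $\delta\to m_-$ or $\delta\to(-1+m)_+$ is the one already present in $C(\delta)$. A cutoff-and-mollify argument finally places $u$, now of finite $H^{2+s}_\delta$ norm, in the completion $H^{2+s}_\delta$.

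The main obstacle is the bookkeeping in this last paragraph: one must check that the power of $2^k$ generated by rescaling the interior estimate is exactly the one carried by the weight $(1+|x|^2)^{(\delta+|\beta|)/2}$, so that the dyadic sums reassemble into $\|\cdot\|_{H^s_{\delta+2}}$ and $\|\cdot\|_{H^{2+s}_\delta}$ without loss. An alternative is to differentiate the equation, $\Delta\partial_j u=\partial_j f$, and invoke the corollary with $s$ replaced by $s-1$ and $\delta$ by $\delta+1$; but then the relevant index is shifted to $m+1$ and one has to verify that $\partial_j f$ is orthogonal to $\mathcal{H}_{m+1}$ as well (it is, because $\partial_j$ sends harmonic polynomials to harmonic polynomials), so the dyadic argument is cleaner and keeps better control of the constants.
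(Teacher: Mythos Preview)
Your argument is correct, but it takes a genuinely different route from the paper. The paper proceeds by induction on $s$ via differentiation of the equation: given $f\in H^{s+1}_{\delta+2}$ orthogonal to $\cup_{i\le m}\mathcal H_i$, Theorem~\ref{laplacien} furnishes $u\in H^2_\delta$ with $\Delta u=f$; then $\Delta\partial_i u=\partial_i f\in H^s_{\delta+3}$, and since $\int(\partial_i f)v=-\int f\,\partial_i v=0$ for every harmonic polynomial $v$ of degree $\le m+1$, the induction hypothesis at weight $\delta+1$ (with $m$ replaced by $m+1$) gives $\partial_i u\in H^{s+2}_{\delta+1}$ and the estimate. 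This is exactly the ``alternative'' you sketch in your last sentence and whose key orthogonality check you identify correctly. Your main argument instead localizes to dyadic annuli, rescales to a fixed annulus so that the weight factors out as a uniform power $2^{k(\delta+1)}$ on every term, applies unweighted interior regularity, and resums; this is a standard and perfectly valid mechanism for upgrading weighted $H^2$ to weighted $H^{2+s}$. The paper's approach is shorter in print and makes the recursive structure of $C(s,\delta)$ explicit in terms of $C(\delta)$ and $C(s-1,\delta+1)$; your approach avoids shifting the weight index (and with it $m$) and isolates the $\delta$-dependence of the constant entirely in the base-case factor $C(\delta)$ coming from Theorem~\ref{laplacien}. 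One small remark: the phrase ``by induction on $s$'' in your a~priori bound paragraph is a slip, since the dyadic argument you describe handles all $s$ at once; the induction, if any, is hidden in the unweighted interior estimate you invoke.
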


\begin{proof}
 We will proceed by induction. Note that Theorem \ref{laplacien} corresponds to the case $s=0$. We assume that the statement of the corollary 
holds true for some $s\in \mathbb{N}$ and all $m\in \mathbb{N}$, and we will prove that it holds true for $s+1$.
Let $m\in \mathbb{N}$ and $-1+m < \delta <m$. Let $f\in H^{s+1}_{\delta+2}$, such that $f$ belongs to the set
$$\left \{f \in H^0_{\delta+2} \; | \; \int fv =0 \quad \forall v \in \cup_{i=0}^m \mathcal{H}_i \right \}.$$
Then Theorem \ref{laplacien} provides a unique $u \in H^2_{\delta}$ such that $\Delta u = f$. In particular for $i=1,2$ we have
$$\Delta \partial_i u = \partial_i f.$$
Since $f\in H^{s+1}_{\delta+2}$, we have that $\partial_i f \in H^s_{\delta+3}$. Moreover, for all $v$, harmonic polynomial of degree $j\leq m+1$,
we have
$$\int (\partial_i f)v = -\int f \partial_i v =0,$$
because $\partial_i v$ is an harmonic polynomial of degree $j-1 \leq m$. Therefore, by induction, we have $\partial_i u \in H^{s+2}_{\delta+1}$ and
\begin{align*}
\|u\|_{H^{s+1+2}_{\delta}} &\lesssim \|u\|_{H^2_\delta} + \|\partial_1 u\|_{H^{s+2}_{\delta+1}} +\|\partial_2 u\|_{H^{s+2}_{\delta+1}}\\
&\leq C(\delta)\|f\|_{H^0_{\delta+2}} + C(s,\delta+1)\left(\|\partial_1 f\|_{ H^s_{\delta+3}}+\|\partial_2 f\|_{ H^s_{\delta+3}}\right)\\
&\leq C(s+1,\delta)\|f\|_{H^{s+1}_{\delta+2}}.
\end{align*}
 \end{proof}

\subsection{Main result}

In the rest of the paper,  $\delta$ will be a fixed real number such that 
$$-1<\delta<0.$$
\begin{df}
Let $\delta'\in \m R$ and $s \in \m N$. We note $\q H^s_{\delta'}$ the set of symmetric traceless tensor whose components are in 
$H^s_{\delta'}$.
\end{df}
We introduce the following traceless tensor, which is the traceless part of $rd\theta^2$
$$H_\theta = -\frac{ \chi(r)}{2r}\left(\begin{array}{cc}
                                          \cos(2\theta) & \sin(2\theta) \\ 
                                           \sin(2 \theta) & -\cos(2\theta)
                                         \end{array} \right),
$$
where $(r,\theta)$ are polar coordinates.
The following theorem is our main result.

\begin{thm}\label{main}Let $\dot{u}^2, |\nabla u|^2 \in H^0_{\delta +2}$, $\widetilde{\tau} \in H^1_{\delta+1}$ and $b\in L^\infty(\m S^1)$ such that
$$\int_{\m S^1} b(\theta)\cos(\theta)d\theta=\int_{\m S^1} b(\theta)\sin(\theta)d\theta =0.$$
We note
$$\varepsilon = \int \dot{u}^2 +|\nabla u|^2.$$
We assume 
$$\|\dot{u}^2\| _{H^0_{\delta+2}} + \||\nabla u|^2\|_{H^0_{\delta+2}} +\|\widetilde{\tau}\|_{H^1_{\delta+1}}+\|b\|_{L^\infty}\lesssim \varepsilon.$$
If $\varepsilon>0$ is small enough, there exist $\alpha, \rho, \eta \in \mathbb{R}$, a scalar function $\widetilde{\lambda} \in H^2_\delta$ and 
a traceless symmetric tensor $\widetilde{H} \in \q H^1_{\delta +1}$ 
such that, if we note
\begin{align*}
H =& \left(b(\theta)+\rho\cos(\theta-\eta)\right)H_\theta +\widetilde{H},\\
\lambda =&-\alpha \chi(r) \ln(r) + \widetilde{\lambda},
\end{align*}
where $(r,\theta)$ are polar coordinates, then $\lambda, H$ is a solution of \eqref{sys} with
$$  \tau = \frac{\chi(r)}{r}\left(b(\theta)+\rho\cos(\theta-\eta)\right) +\widetilde{\tau}.$$
Moreover, $\alpha, \rho, \eta, \widetilde{\lambda}, \widetilde{H}$ are unique. Finally, $\alpha$, $\rho$, $\eta$ are such that
\begin{align*}
 \alpha&=  \frac{1}{4\pi} \int \left(\dot{u}^2 +|\nabla u|^2\right) +O(\varepsilon^2) ,\\
\rho\cos(\eta) &= \frac{1}{\pi}\int \dot{u}\partial_1 u + O(\varepsilon ^{2}) ,\\
\rho \sin(\eta) &= \frac{1}{\pi}\int \dot{u}\partial_2 u + O(\varepsilon ^{2}) ,\\
\end{align*}
and we have the estimates $\| \widetilde{\lambda}\|_{H^2_\delta} +|\alpha|\lesssim \varepsilon$ and $\| \widetilde{H} \|_{\q H^1_{\delta+1}}+|\rho| \lesssim \varepsilon$.
\end{thm}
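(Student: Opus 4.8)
The plan is to solve the coupled system \eqref{sys} by a fixed point argument, decoupling it into two pieces: first solve the momentum constraint for $H$ given $\lambda$ and $\tau$, then solve the Hamiltonian constraint for $\lambda$ given $H$ and $\tau$. The essential point is that both operators involved — the divergence-type operator $H \mapsto \partial_i H_{ij}$ acting on traceless symmetric $2$-tensors, and the Laplacian $\lambda \mapsto \Delta\lambda$ — have nontrivial cokernels when inverted between the relevant weighted spaces on $\m R^2$, so one cannot expect a solution for arbitrary right-hand sides. The explicit $\chi(r)/r$ terms in the ansatz for $H$ and the $\chi(r)\ln(r)$ term in $\lambda$ are precisely the ``lowest modes'' needed to absorb the obstructions, and the free parameters $\alpha,\rho,\eta$ (together with the given $b$) are tuned to kill the corresponding integral conditions.

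The key steps, in order, are as follows. First, I would analyze the operator $\mathrm{div}$ on traceless symmetric $2$-tensors on $\m R^2$ in weighted Sobolev spaces: writing such a tensor via a single complex-valued potential (or using the identification with holomorphic-type data), one finds that the homogeneous solutions decaying like $1/r$ are exactly spanned by the $\cos(2\theta),\sin(2\theta)$ block and the $\cos\theta,\sin\theta$-type blocks appearing in the statement, and that solvability of $\partial_i H_{ij}=f_j$ in $\q H^1_{\delta+1}$ requires finitely many linear conditions on $f$ (moments against these modes). This lets me write $H = (\text{explicit homogeneous part parametrized by } b,\rho,\eta) + \widetilde H$ with $\widetilde H \in \q H^1_{\delta+1}$, provided the right-hand side $-\dot u\partial_j u + \frac12\partial_j\tau - \frac12\tau\partial_j\lambda$ satisfies those moment conditions; these conditions, to leading order, are what fix $\rho\cos\eta$ and $\rho\sin\eta$ in terms of $\int\dot u\,\partial_1 u$ and $\int\dot u\,\partial_2 u$. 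Second, with $H$ (hence $|H|^2$) determined, I apply Corollary \ref{coro}: the Hamiltonian constraint reads $\Delta\lambda = -\frac12\dot u^2 - \frac12|\nabla u|^2 - \frac12|H|^2 + \frac{\tau^2}{4}$, and the coefficient of $\chi(r)\ln(r)$ in the resulting $\lambda$ is $-\alpha$ with $\alpha = \frac12\int(\dot u^2 + |\nabla u|^2 + |H|^2) - \frac14\int\tau^2 + \dots$; here the delicate asymptotic matching between $|H|^2 \sim (\text{const})/r^2$ and $\tau^2 \sim b^2\chi(r)^2/r^2$ must be checked so that the right-hand side genuinely lies in $H^0_{\delta+2}$ (this is the point flagged in the introduction, using the result of \cite{ni}). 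Third, I set up the whole thing as a map $\Phi:(\widetilde H,\widetilde\lambda,\alpha,\rho,\eta)\mapsto(\widetilde H',\widetilde\lambda',\alpha',\rho',\eta')$ on a small ball of radius $\sim\varepsilon$ in $\q H^1_{\delta+1}\times H^2_\delta\times\m R^3$, using Propositions \ref{holder}, \ref{produit} and Lemma \ref{produit2} to control all the quadratic nonlinearities (products like $\dot u\partial_j u$, $|H|^2$, $\tau\partial_j\lambda$, $\tau^2$) in the weighted norms, and Corollaries \ref{coro} and \ref{regu} plus the tensor-divergence solvability theory for the linear inversions; a contraction-mapping / Banach fixed point argument then yields existence and uniqueness of $(\widetilde H,\widetilde\lambda,\alpha,\rho,\eta)$ with the stated bounds, and tracking the leading-order terms gives the asymptotic formulas for $\alpha,\rho\cos\eta,\rho\sin\eta$.

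The main obstacle I expect is the careful treatment of the borderline weights and the cokernel bookkeeping on $\m R^2$. Unlike the three-dimensional case, one cannot set $\tau=0$: the $1/r$ decay forced on $H$ by the momentum constraint produces a $|H|^2\sim c/r^2$ term whose sign would obstruct solving the Hamiltonian constraint (by \cite{ni}), so $\tau$ must be chosen with a matching $b\chi(r)/r + \rho\chi(r)\cos(\theta-\eta)/r$ asymptotic to make the net source of $\Delta\lambda$ land in $H^0_{\delta+2}$ with an integral (hence an $\alpha$) of the correct sign. Getting the constants in the decomposition of $H$ right — i.e. identifying exactly which homogeneous $1/r$ tensor modes are needed, showing the moment map from $(b,\rho,\eta)$ onto the obstruction space is (to leading order) invertible, and verifying that the same $b$ appearing in $\tau$ and in the $\cos 2\theta$ block of $H$ is consistent — is the technically delicate part; everything else is a by-now-standard weighted-space fixed point argument.
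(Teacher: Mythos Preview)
Your plan is correct and matches the paper's strategy: a contraction mapping in weighted Sobolev spaces, with the explicit $\chi(r)/r$ tensors and the $\chi(r)\ln(r)$ term absorbing the obstructions to inverting the divergence and the Laplacian on $\m R^2$, and the cancellation $\tfrac12|H_b+H_{\rho,\eta}|^2=\tfrac14\bigl((b+\rho\cos(\theta-\eta))\chi(r)/r\bigr)^2$ ensuring the Hamiltonian source lies in $H^0_{\delta+2}$.

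Two organizational differences are worth noting. First, the paper does not analyze the cokernel of $\mathrm{div}$ on traceless symmetric tensors directly; instead it writes $K_{ij}=\partial_iY_j+\partial_jY_i-\delta_{ij}\partial^kY_k$ and reduces $\partial_iK_{ij}=f_j$ to $\Delta Y_j=f_j$, so that the entire obstruction theory is just Corollary~\ref{coro} applied componentwise. This is quicker than building a separate Fredholm picture for $\mathrm{div}$, and it makes transparent that the only ``moment'' obstructions are $\int f_1,\int f_2$ (giving the $\cos(\theta+\phi),\sin(\theta+\phi)$ block); the $\cos(2\theta)$ and $\cos(3\theta-\eta)$ blocks are \emph{not} homogeneous divergence-free modes as you suggest, but rather explicit particular solutions matched by hand to the $\partial_j\tau$ terms. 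Second, the paper runs a nested fixed point: for each $(\alpha,\widetilde\lambda,\widetilde H)$ it first solves an inner $2\times2$ linear system to determine $(\rho,\eta)$, and only then iterates on $(\alpha,\widetilde\lambda,\widetilde H)$. Your single fixed point on all five variables would work equally well; the paper's nesting just keeps the contraction estimates slightly cleaner since $(\rho,\eta)$ are eliminated algebraically at each step.
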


The following corollary is an immediate consequence of Theorem \ref{main} and Corollary \ref{regu}.

\begin{cor}\label{regu2}
Let $s\in \mathbb{N}$ and assume $\dot{u}^2, |\nabla u|^2 \in H^s_{\delta+2}$, $b \in W^{s,\infty}(\m S^1)$
and $\widetilde{\tau} \in H^{s+1}_{\delta+1}$. Let $\varepsilon$ be defined as in Theorem \ref{main}.
Then the conclusion of Theorem \ref{main} holds and we have furthermore $\widetilde{\lambda} \in H^{s+2}_{\delta}$,
 $\widetilde{H} \in \q H^{s+1}_{\delta+1}$, with the estimates 
$$\|\widetilde{\lambda}\|_{H^{s+2}_\delta}+\|\widetilde{H}\|_{\q H^{s+1}_{\delta+1}} \lesssim 
\|\dot{u}^2\|_{H^s_{\delta+2}}+\||\nabla u|^2\|_{H^s_{\delta+2}} +\|\widetilde{\tau}\|_{H^{s+1}_{\delta+1}}+\|b\|_{W^{s,\infty}}.$$
\end{cor}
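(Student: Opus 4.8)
The plan is to derive this corollary as a regularity bootstrap on top of the solution already produced by Theorem \ref{main}. First I would invoke Theorem \ref{main} itself: since $H^s_{\delta+2}\subset H^0_{\delta+2}$ and $H^{s+1}_{\delta+1}\subset H^1_{\delta+1}$ continuously, the hypotheses of Theorem \ref{main} are satisfied (with $\varepsilon = \int \dot u^2 + |\nabla u|^2$ as defined there, and the smallness assumption carried over), so we obtain $\alpha,\rho,\eta$, a scalar function $\widetilde\lambda \in H^2_\delta$ and a traceless symmetric tensor $\widetilde H \in \mathcal H^1_{\delta+1}$ solving \eqref{sys} with the stated explicit leading terms, together with the estimates $\|\widetilde\lambda\|_{H^2_\delta}\lesssim\varepsilon$ and $\|\widetilde H\|_{\mathcal H^1_{\delta+1}}\lesssim\varepsilon$. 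It then remains only to upgrade the regularity of $\widetilde\lambda$ and $\widetilde H$ from $s=0$ to general $s$, and to track the corresponding norm bounds.

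Next I would look at the equation satisfied by $\widetilde\lambda$. Writing $\lambda = -\alpha\chi(r)\ln r + \widetilde\lambda$ and plugging into the second equation of \eqref{sys}, we get
\begin{equation*}
\Delta\widetilde\lambda = \alpha\,\Delta(\chi(r)\ln r) - \frac12\dot u^2 - \frac12|\nabla u|^2 - \frac12|H|^2 + \frac{\tau^2}{4}.
\end{equation*}
I would argue that the right-hand side lies in $H^s_{\delta+2}$: the term $\Delta(\chi(r)\ln r)$ is smooth and compactly supported hence in every $H^s_{\delta'}$; $\dot u^2$ and $|\nabla u|^2$ are in $H^s_{\delta+2}$ by hypothesis; and $|H|^2$, $\tau^2$ are quadratic expressions in the explicit leading profiles (which, multiplied by $\chi(r)/r$, decay like $1/r$ and are smooth away from the origin, so their squares are in $H^s_{\delta+2}$ since $\delta+2<2$), plus cross terms and squares of $\widetilde H$ and $\widetilde\tau$, all controlled by Proposition \ref{produit} together with Lemma \ref{produit2} once we know $\widetilde H\in\mathcal H^{s+1}_{\delta+1}$ and $\widetilde\tau\in H^{s+1}_{\delta+1}$. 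Since $\widetilde\lambda\in H^2_\delta$ is already known to solve this Poisson equation and $-1<\delta<0$ puts us in the range $m=0$ of Corollary \ref{regu}, uniqueness in that corollary forces $\widetilde\lambda$ to coincide with the $H^{s+2}_\delta$ solution, and the estimate $\|\widetilde\lambda\|_{H^{s+2}_\delta}\le C(s,\delta)\|\Delta\widetilde\lambda\|_{H^s_{\delta+2}}$ gives the claimed bound once the right-hand side is estimated.

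For $\widetilde H$ I would use the first (momentum) equation of \eqref{sys}. Subtracting the contributions of the explicit leading terms of $H$ and $\tau$, $\widetilde H$ satisfies a first-order equation of the schematic form $\partial_i\widetilde H_{ij} = G_j$, where $G_j$ collects $-\dot u\partial_j u + \frac12\partial_j\widetilde\tau - \frac12\tau\partial_j\lambda - \widetilde H_{ij}\partial_i\lambda$ together with the divergence-type terms produced by the explicit profiles; using the product rules and the fact that $\partial_j u\in H^{s}_{\delta+1}$-type bounds follow from the hypothesis $|\nabla u|^2\in H^s_{\delta+2}$, one checks $G_j\in H^s_{\delta+2}$ with norm $\lesssim \|\dot u^2\|_{H^s_{\delta+2}} + \||\nabla u|^2\|_{H^s_{\delta+2}} + \|\widetilde\tau\|_{H^{s+1}_{\delta+1}} + |b|$. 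Then I would appeal to whatever inversion theory for the divergence operator on traceless symmetric $2$-tensors is used in the proof of Theorem \ref{main} (the analogue of Corollary \ref{regu} for that operator), which upgrades $\widetilde H$ from $\mathcal H^1_{\delta+1}$ to $\mathcal H^{s+1}_{\delta+1}$ with the matching estimate. Because the two equations are coupled through $|H|^2$ and $\tau\partial_j\lambda$, I would run the two bootstraps simultaneously by induction on $s$: assuming $\widetilde\lambda\in H^{s+1}_\delta$ and $\widetilde H\in\mathcal H^s_{\delta+1}$, the right-hand sides above gain one derivative, yielding $\widetilde\lambda\in H^{s+2}_\delta$ and $\widetilde H\in\mathcal H^{s+1}_{\delta+1}$.

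The main obstacle, and the only place any real care is needed, is the product-rule bookkeeping for the quadratic terms: one must verify that every nonlinear term ($|H|^2$, $\tau^2$, $\tau\partial_j\lambda$, $\widetilde H_{ij}\partial_i\lambda$, and the interactions between the explicit $\chi(r)/r$ profiles and the remainders) actually lands in $H^s_{\delta+2}$ with the asserted norm bound, which requires checking the weight arithmetic $\delta<\delta_1+\delta_2+1$ in Proposition \ref{produit} and using Lemma \ref{produit2} for the non-decaying factor $\chi(r)\ln r$ appearing in $\partial_j\lambda$. Everything else is a routine linear elliptic regularity argument combined with the uniqueness clauses of Theorem \ref{main} and Corollary \ref{regu}.
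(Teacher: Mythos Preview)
Your overall strategy---invoke Theorem \ref{main} for existence at the base regularity, then bootstrap via Corollary \ref{regu} on the Poisson equation for $\widetilde\lambda$ and via the potential representation of Lemma \ref{lemme} (which reduces $\partial_i K_{ij}=f_j$ to $\Delta Y_j=f_j$) for $\widetilde H$, iterating in $s$---is exactly what the paper means when it calls the corollary ``an immediate consequence of Theorem \ref{main} and Corollary \ref{regu}''. But there is one concrete error in your bookkeeping for the Lichnerowicz right-hand side.

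You assert that the squares of the explicit leading profiles $H_b+H_{\rho,\eta}$ and $(b+\rho\cos(\theta-\eta))\chi(r)/r$, which behave like $r^{-2}$ at infinity, lie in $H^s_{\delta+2}$ ``since $\delta+2<2$''. This is false: for $-1<\delta<0$ a function comparable to $r^{-2}$ is \emph{not} in $H^0_{\delta+2}$, because
\[
\int_1^\infty r^{-4}(1+r^2)^{\delta+2}\,r\,dr \sim \int_1^\infty r^{2\delta+1}\,dr
\]
diverges (here $2\delta+1>-1$). The reason $-\tfrac12|H|^2+\tfrac14\tau^2$ nonetheless lands in $H^s_{\delta+2}$ is the exact algebraic cancellation computed in the proof of Proposition \ref{lichn}, namely \eqref{eg}:
\[
\tfrac12|H_b+H_{\rho,\eta}|^2 \;=\; \tfrac14\left(\frac{(b+\rho\cos(\theta-\eta))\chi(r)}{r}\right)^2.
\]
This identity removes the non-integrable $r^{-2}$ contributions entirely; what remains are the cross terms (leading profile)$\times\widetilde H$, (leading part of $\tau$)$\times\widetilde\tau$, and the genuine remainders $|\widetilde H|^2$, $\widetilde\tau^2$, and those are the terms legitimately controlled by Proposition \ref{produit} and Lemma \ref{produit2}. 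Once you invoke \eqref{eg} rather than treating the leading squares separately, the rest of your induction goes through as written.
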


\paragraph{Comments on Theorem \ref{main}}
\begin{enumerate}
\item The trivial asymptotically flat solution to the Einstein vacuum equations with a space-like translational Killing field \eqref{wavebis}, \eqref{wave} and \eqref{einst2}
is obtained by taking for $\gra g$ the Minkowski metric on $\m R^{1+2}$, and by setting $\omega= \gamma=0$. The corresponding initial data set 
is given by 
$$(\dot{u}^2= 0, |\nabla u|^2=0 , \tau=0, H=0, \lambda=0).$$
Theorem \ref{main} corresponds to the existence of solutions to the constraint equations which are small perturbations of 
$(0,0,0,0,0)$. An interesting open problem is the question of the non linear stability of the ``Minkowski space-time with a space-like translational Killing field'' under these perturbations\footnote{This is the analogue in dimension $2+1$ of the nonlinear stability of the Minkowski space-time
in dimension $3+1$, which has been established in the celebrated work of Christodoulou and Klainerman \cite{ck}.}.  
\item We solve here the constraint equations for small data. It is an interesting open problem to investigate the large data case.
\item  The logarithmic divergence in $\lambda$ does not contradict asymptotic flatness (see Remark \ref{consing}).
\item To understand where the special asymptotic structure of our solutions comes from, we can consider the space-time metric, given in $(t,r,\theta)$ coordinates by
$$\gra g_h = -dt^2 + r^{-2\alpha}(dr^2+(r-2h(\theta)r^{\alpha}t)^2d\theta^2).$$
This is a flat Lorentzian metric wherever it is well defined. The induced Riemannian metric on the surface $t=0$ is $r^{-2\alpha}\delta$, and we can calculate the second fundamental form, which is given
in $(x_1,x_2)$ coordinates by
$$K= \frac{h(\theta)r^{-\alpha}}{r}\left(\begin{array}{ll}
                                               \sin^2(\theta) & -\cos(\theta)\sin(\theta) \\
                                               -\cos(\theta)\sin(\theta) & \cos^2(\theta) 
                                              \end{array}\right).$$
Therefore, for such a metric, we have $\tau =\frac{r^\alpha}{r}h(\theta)$ and $H=h(\theta)r^{-\alpha}H_\theta$. By choosing $h(\theta)=b(\theta)+\rho\cos(\theta-\eta)$, we obtain exactly the asymptotic behavior predicted by Theorem \ref{main}. Since $\gra g_h$ is a flat space-time metric, the induced metric and second fundamental form on $\m R^2$ are special solutions of the vacuum constraint equations.
\item The function $b(\theta)$ is a free parameter. However, $\rho$ and $\eta$ must be chosen according to the asymptotic behavior $H=O(\frac{1}{r})$, induced by the fundamental solution of the Laplace operator in the first equation of \eqref{sys}.
The particular structure of the coupling between the momentum and the Hamiltonian constraints, explained in the previous comment, makes this cancellation possible, and allows us to perform 
a fixed point theorem
in weighted Sobolev spaces. 
\item The quantities $\alpha$, $\rho$ and $\eta$ are conserved by the flow of the Einstein equations. 
To see this, we note that $\rho,\eta$ can be expressed as
\begin{align*}
\rho\cos(\eta) & =\frac{1}{\pi} \lim_{r \rightarrow \infty} \int_0^{2\pi} \tau\cos(\theta) rd\theta,\\
\rho \sin(\eta) & =\frac{1}{\pi} \lim_{r \rightarrow \infty} \int_0^{2\pi} \tau\sin(\theta) rd\theta.
\end{align*}
The $(0,0)$ component of equation \eqref{einst2} can be written under the form
$$\partial_0 \tau = -e^{-2\lambda}\Delta N + \left(e^{-4\lambda}(|H|^2 + \dot{u}^2) + \frac{\tau^2}{2}\right)N.$$
This yields $\partial_t \tau = O(\frac{1}{r^2})$ as $r$ tends to $\infty$ and therefore
$$\partial_t(\rho\cos(\eta)) = \partial_t(\rho \sin(\eta))=0.$$
The deficit angle $\alpha$ can be expressed as
$$\alpha = 1-\lim_{r\rightarrow \infty}\frac{L(r)}{2\pi r},$$
where $L(r)$ is the length of the set of points which are at distance $r$ from the origin. In the coordinates where $g$ is asymptotic to
$$dr'^2+(1-\alpha)^2r'^2d\theta^2,$$
we see that we can write
$$\alpha = 1-\frac{1}{2\pi r}\lim_{r\rightarrow \infty} \int_0^{2\pi} \sqrt{g_{\theta \theta}}d\theta.$$
Since the evolution equation for $g$ is
$\partial_0 g_{ij}= -2NK_{ij},$
we have $\partial_t g_{\theta\theta}=O(r)$ as $r$ tends to $\infty$ and
$$\partial_t \alpha=0.$$
The deficit angle is said to be the two dimensional equivalent of the ADM energy, and we can naturally think of $\rho(\cos(\eta),\sin(\eta))$ as the ADM linear momentum.
\end{enumerate}

\subsection{Outline of the proof}
We will prove Theorem \ref{main} by a fixed point argument. The quantities $\dot{u}, \nabla u, \wht \tau$ and $b(\theta)$ are fixed.
\paragraph{The construction of the map F.}
We will construct a map
\begin{align*}
F:\m R \times H^2_\delta \times \q H^1_{\delta+1} &\rightarrow \m R \times H^2_\delta \times \q H^1_{\delta+1} \\
(\alpha, \widetilde{\lambda},\widetilde{H}) &\mapsto (\alpha', \widetilde{\lambda'},\widetilde{H'}),
\end{align*} 
such that $(\lambda', H')$ given by
\begin{align*}
\lambda' &= -\alpha'\ln(r)\chi(r) + \widetilde{\lambda'},\\
H' &= (b(\theta)+\rho\cos(\theta-\eta))H_\theta+ \widetilde{H'}, 
\end{align*}
are solutions of
\begin{align}
\label{eqhp}&\partial_i H'_{ij} +H_{ij}\partial_i \lambda  = -\dot{u}\partial_j u + \frac{1}{2} \partial_j \tau-\frac{1}{2} \tau\partial_j \lambda,\\
\label{eqlp}&\Delta \lambda' + \frac{1}{2}\dot{u}^2 + \frac{1}{2}|\nabla u|^2+\frac{1}{2}|H|^2-\frac{\tau^2}{4} = 0,
\end{align}
with $\lambda, H, \tau$ defined by
\begin{align}
\label{lam} \lambda &= -\alpha\ln(r)\chi(r) + \widetilde{\lambda},\\
\label{h}H &= (b(\theta)+\rho\cos(\theta-\eta))H_\theta+ \widetilde{H},\\
\label{tau}\tau &= \frac{b\chi(r)}{r} +\frac{\rho\chi(r)}{r}\cos(\theta-\eta) +\widetilde{\tau},
\end{align}
where $\rho, \eta$ depending on $\alpha, \widetilde{\lambda}, \widetilde{H} $ are constructed during the process in order to have the same parameters $\rho,\eta$ in $H,\tau$ and in $H',\tau'$.
Then, proving that $F$ has a fixed point easily follows from the estimates derived for $\alpha', \lambda'$ and $H'$, which concludes the proof of Theorem 
\ref{main}. Thus the core of the analysis is to solve \eqref{eqhp} and \eqref{eqlp}.

\paragraph{Solving \eqref{eqhp}.} 
For $\lambda, H, \tau$ of the form \eqref{lam}, \eqref{h} and \eqref{tau},
there always exists a solution of \eqref{eqhp} of the form
$$H'= \frac{\chi(r)}{r}\left(\begin{array}{cc}
g_1(\theta) & g_2(\theta) \\
g_2(\theta) & -g_1(\theta)
                      \end{array}\right)
+\widetilde{H'},$$
where $g_1$ and $g_2$ are two functions of the angle $\theta$, and $\widetilde{H'}$ is a traceless symmetric tensor belonging to $\q H^1_{\delta+1}$.
For $\varepsilon>0$ small enough, we will be able to choose $\rho$ and $\eta$ such that $H'$ can be written under the form $(b(\theta)+\rho\cos(\theta-\eta))H_\theta+ \widetilde{H'}$.

\paragraph{Solving \eqref{eqlp}.}
We remark that, according to Theorem \ref{laplacien}, the equation \eqref{eqlp} may not have solutions in $H^2_{\delta}$. 
Also, because of the asymptotic behavior of $|H|^2$ and $\tau^2$, which are only decreasing like $\frac{1}{r^2}$ as $r \rightarrow \infty$, 
the right-hand side of the equation \eqref{eqlp} may not be in the space $H^0_{\delta+2}$, in which case we may not be able
to apply Corollary \ref{coro} to solve the equation. However, the particular form of $H$ and $\tau$ allows the terms decreasing like $\frac{1}{r^2}$
to balance each other, and we are able to obtain a solution of \eqref{eqlp} of the form $ -\alpha'\ln(r)\chi(r) + \widetilde{\lambda'}$ with
$$\alpha' = \frac{1}{2\pi}\int \left( \frac{1}{2}\dot{u}^2 + \frac{1}{2}|\nabla u|^2+\frac{1}{2}|H|^2-\frac{\tau^2}{4}\right).$$
\\

The rest of the paper is as follows.
In section \ref{moment}, we explain how to solve the momentum constraint \eqref{eqhp}. In section \ref{lic}, we explain how
to choose the coefficients $\rho, \eta$ and how to solve the Lichnerowicz equation \eqref{eqlp}. Finally
the map $F$ is constructed in section \ref{concl}. It is shown to have a fixed point, which concludes the proof of Theorem \ref{main}.

\section{The momentum constraint}\label{moment}
The goal of this section is to solve equation \eqref{eqhp}.
\begin{prp}\label{solh}
We assume $\dot{u}.\nabla u \in H^0_{\delta+2}$. Let $b\in L^\infty(\m S^1)$ such that
$$\int_{\m S^1} b(\theta)\cos(\theta)d\theta=\int_{\m S^1} b(\theta)\sin(\theta)d\theta =0.$$
Let $\alpha , \rho, \eta \in \m R$, and let
\begin{align*}
 \tau&=b(\theta)\frac{\chi(r)}{r}+ \rho\frac{\chi(r)}{r}\cos(\theta-\eta)+\widetilde{\tau},\\
\lambda &=-\alpha \chi(r)\ln(r) + \widetilde{\lambda},\\
H&=(b(\theta)+\rho\cos(\theta-\eta))H_\theta + \widetilde{H},
\end{align*}
with
$\widetilde{\tau}\in H^1_{\delta+1}$, $\widetilde{\lambda} \in H^2_\delta$, $\widetilde{H} \in \q H^1_{\delta+1}$.
Then the equation
$$\partial_i H'_{ij} +H_{ij}\partial_i \lambda  = -\dot{u}.\partial_j u + \frac{1}{2} \partial_j \tau-\frac{1}{2} \tau\partial_j \lambda,$$
has a unique solution of the form
\begin{align*}
H' =&
\frac{m\chi(r)}{r}\left(\begin{array}{cc}
                                          \cos(\theta+\phi) & \sin(\theta+\phi) \\ 
                                           \sin(\theta+\phi) & -\cos(\theta+\phi)
                                         \end{array} \right)\\
&-\frac{\rho \chi(r)}{4r}\left(\begin{array}{cc}
                                          \cos(3\theta-\eta) & \sin(3\theta-\eta) \\ 
                                           \sin(3\theta-\eta) & -\cos(3\theta-\eta)
                                         \end{array} \right) +b(\theta)H_\theta + \widetilde{H}',
\end{align*}
 with $\widetilde{H}' \in \q H^1_{\delta+1}$ and
\begin{equation}\label{cos} 
\begin{split}
m\cos(\phi) = \frac{1}{2\pi}\int &\bigg(-\dot{u}.\partial_1 u -\frac{1}{2}\widetilde{\tau}\partial_1 \lambda
-\widetilde{H}_{i1}\partial_i \lambda
-\partial_i \widetilde{\lambda}\big((b(\theta)+\rho\cos(\theta-\eta))H_\theta\big)_{i1}\\
 &-\frac{1}{2}\chi(r)\frac{b(\theta)+\rho\cos(\theta-\eta)}{r}\partial_1 \widetilde{\lambda} \bigg)+\frac{ \rho}{4} \cos(\eta),
\end{split}
\end{equation}
\begin{equation}\label{sin} 
\begin{split}
m\sin(\phi) = \frac{1}{2\pi}\int &\bigg(-\dot{u}.\partial_2 u 
-\frac{1}{2}\widetilde{\tau}\partial_2 \lambda
-\widetilde{H}_{i2}\partial_i \lambda 
-\partial_i \widetilde{\lambda}\big((b(\theta)+\rho\cos(\theta-\eta))H_\theta\big)_{i2}\\
& -\frac{1}{2}\chi(r)\frac{b(\theta)+\rho\cos(\theta-\eta)}{r}\partial_2 \widetilde{\lambda}\bigg)
+ \frac{\rho}{4} \sin(\eta).
\end{split}
\end{equation}
Moreover we have
\begin{equation*}
 \begin{split}
\|\widetilde{H'}\|_{\q H^1_{\delta+1}} \lesssim &\|b\|_{L^\infty} + \|\dot{u}.\nabla u\|_{H^0_{\delta +2}} +(1+|\alpha|+\|\widetilde{\lambda}\|_{H^2_\delta})\|\widetilde{\tau}\|_{H^1_{\delta+1}}\\
 &+ |\rho| + (\|\widetilde{H}\|_{\q H^1_{\delta+1}} +\|b\|_{L^\infty}+|\rho|) \|\widetilde{\lambda}\|_{H^2_\delta} +|\alpha|\|\widetilde{H}\|_{\q H^1_{\delta+1}} .
\end{split}
\end{equation*}
\end{prp}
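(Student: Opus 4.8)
The plan is to rewrite \eqref{eqhp} as the divergence equation $\partial_i H'_{ij}=V_j$ for the explicitly known $1$-form $V_j:=-\dot u\,\partial_j u+\tfrac12\partial_j\tau-\tfrac12\tau\,\partial_j\lambda-H_{ij}\,\partial_i\lambda$, to strip off by hand the part of $V$ that decays only like $r^{-2}$, and then to invert the Laplacian on what is left via Corollary \ref{coro}. The computational backbone is the identity, valid for $r\ge 2$ (hence modulo a compactly supported error produced by the cut-off),
$$\partial_i\big(\tfrac{\chi(r)}{r}M^{c}_{k}\big)_{ij}=\tfrac{k-1}{r^{2}}\big(\cos((k-1)\theta+c),\ \sin((k-1)\theta+c)\big)_{j},\qquad M^{c}_{k}:=\begin{pmatrix}\cos(k\theta+c)&\sin(k\theta+c)\\\sin(k\theta+c)&-\cos(k\theta+c)\end{pmatrix},$$
checked by a direct polar computation; I will also record that $(M^{c}_{k})_{ij}x_i/r=\big(\cos((k-1)\theta+c),\sin((k-1)\theta+c)\big)_j$ is a pure frequency $k-1$ vector, and that the right-hand side above vanishes for $k=1$.

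First I would insert \eqref{lam}, \eqref{h} and \eqref{tau} into $V$ and sort the terms by their decay. Using Proposition \ref{produit} for products of two factors each lying in a space of the type $H^1_{\delta+1}$, Lemma \ref{produit2} for products of such a factor with one of the bounded functions $\chi(r)/r$, $\partial_j(\chi\ln r)$, $\partial_j(\chi/r)$ (all $\lesssim(1+|x|^2)^{-1/2}$), and the hypothesis $\dot u\nabla u\in H^0_{\delta+2}$, one obtains a splitting $V_j=\tfrac{\chi(r)}{r^{2}}P_j(\theta)+V^{\mathrm g}_j$ with $V^{\mathrm g}\in H^0_{\delta+2}$ and $\|V^{\mathrm g}\|_{H^0_{\delta+2}}$ bounded by the right-hand side of the asserted estimate. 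The essential point — the one flagged in the introduction — is the exact algebraic structure of $P_j$: a direct computation of the frequency $0,1,2$ parts of $\tfrac{\chi}{r^2}P_j$ shows that every term carrying the factor $\alpha$ cancels in pairs between $-\tfrac12\tau\partial_j\lambda$ and $-H_{ij}\partial_i\lambda$ (the frequency $0$ parts are $\pm\tfrac{\alpha\rho}{4}\tfrac{\chi}{r^2}(\cos\eta,\sin\eta)_j$, the frequency $1$ parts $\pm\tfrac{\alpha b}{2}\tfrac{\chi}{r^2}(\cos\theta,\sin\theta)_j$, the frequency $2$ parts $\pm\tfrac{\alpha\rho}{4}\tfrac{\chi}{r^2}(\cos(2\theta-\eta),\sin(2\theta-\eta))_j$), so that $P_j=-\tfrac b2(\cos\theta,\sin\theta)_j-\tfrac\rho2(\cos(2\theta-\eta),\sin(2\theta-\eta))_j$ has its frequency $1$ component coming only from $\tfrac12\partial_j(b\chi/r)$, its frequency $2$ component only from $\tfrac12\partial_j(\rho\chi\cos(\theta-\eta)/r)$, and no frequency $0$ component at all. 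The vanishing of the frequency $0$ part is exactly what is needed next: by the identity above a frequency $0$ source would require a $k=1$ tensor, whose divergence is zero.

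Next I would subtract explicit tensors. By the identity, $\partial_i(H_b)_{ij}$ cancels $\tfrac{\chi}{r^2}$ times the frequency $1$ part of $P_j$ precisely when $H_b=-\tfrac{b\chi}{2r}M^{0}_{2}$, and adding $-\tfrac{\rho\chi}{4r}M^{-\eta}_{3}$ cancels the frequency $2$ part; what remains is $\partial_i\widetilde H'_{ij}=\widehat V_j$ with $\widehat V:=V-\partial_i(H_b)_{i\cdot}-\partial_i\big(-\tfrac{\rho\chi}{4r}M^{-\eta}_3\big)_{i\cdot}\in H^0_{\delta+2}$ and $\|\widehat V\|_{H^0_{\delta+2}}\lesssim\|V^{\mathrm g}\|_{H^0_{\delta+2}}+|b|+|\rho|$ (the last two from the cut-off errors). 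To solve this I use the conformal Killing operator $(\mathbb L W)_{ij}:=\partial_i W_j+\partial_j W_i-\delta_{ij}\partial_k W_k$, which produces a symmetric traceless tensor and satisfies $\partial_i(\mathbb L W)_{ij}=\Delta W_j$ in dimension $2$; thus it suffices to solve $\Delta W_j=\widehat V_j$, and Corollary \ref{coro} gives $W_j=\big(\int\widehat V_j\big)\chi(r)\ln r+\widetilde W_j$ with $\widetilde W_j\in H^2_\delta$, $\|\widetilde W_j\|_{H^2_\delta}\lesssim\|\widehat V_j\|_{H^0_{\delta+2}}$. Since for $r\ge2$ one computes $\mathbb L(\chi\ln r\,e_1)=\tfrac1r M^{0}_{1}$ and $\mathbb L(\chi\ln r\,e_2)=\tfrac1r M^{\pi/2}_{1}$, and $aM^{0}_1+bM^{\pi/2}_1=\sqrt{a^2+b^2}\,M^{\psi}_1$ with $\tan\psi=b/a$, the tensor $\mathbb L W$ splits as $\tfrac{m\chi}{r}M^{\phi}_1+\widetilde H'$ with $m\cos\phi=\int\widehat V_1$, $m\sin\phi=\int\widehat V_2$, while $\widetilde H'=\mathbb L\widetilde W$ (plus the compactly supported correction) lies in $\q H^1_{\delta+1}$ with $\|\widetilde H'\|_{\q H^1_{\delta+1}}\lesssim\sum_j\|\widetilde W_j\|_{H^2_\delta}\lesssim\|\widehat V\|_{H^0_{\delta+2}}$; re-expanding $\|\widehat V\|_{H^0_{\delta+2}}$ with the product estimates above yields the stated bound, and assembling $H'=\tfrac{m\chi}{r}M^{\phi}_1-\tfrac{\rho\chi}{4r}M^{-\eta}_3+H_b+\widetilde H'$ gives the asserted form.

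To identify $m\cos\phi,m\sin\phi$ I would evaluate $\int\widehat V_j$ by the divergence theorem. The boundary integral at infinity of $\partial_i(\tfrac{\chi}{r}M^{c}_k)_{i\cdot}$ equals $2\pi(\cos c,\sin c)$ for $k=1$ and vanishes for $k\neq1$, so the subtracted tensors $H_b$ (frequency $2$) and $-\tfrac{\rho\chi}{4r}M^{-\eta}_3$ (frequency $3$) contribute nothing and $\int\widehat V_j=\int V_j$; moreover $\int\partial_j\widetilde\tau=0$ (a continuous functional on $H^1_{\delta+1}$ vanishing on $C^\infty_0$), $\int\partial_j(b\chi/r)=0$, while $\int\partial_j(\rho\chi\cos(\theta-\eta)/r)=\rho\int_0^{2\pi}\cos(\theta-\eta)\cos\theta\,d\theta=\pi\rho\cos\eta$ for $j=1$ and $\pi\rho\sin\eta$ for $j=2$, which together with the $\tfrac12$ in front of $\partial_j\tau$ is the source of the terms $\tfrac{\pi\rho}{2}\cos\eta$ and $\tfrac{\pi\rho}{2}\sin\eta$ in \eqref{cos} and \eqref{sin}, the remaining terms being $\int$ of $-\dot u\partial_j u$, $-\tfrac12\widetilde\tau\partial_j\lambda$, $-\widetilde H_{ij}\partial_i\lambda$, $-(H_b+H_{\rho,\eta})_{ij}\partial_i\widetilde\lambda$ and $-\tfrac12\chi r^{-1}(b+\rho\cos(\theta-\eta))\partial_j\widetilde\lambda$ (the frequency $\ge1$ part of $\tfrac{\chi}{r^2}P_j$ integrating to zero over $\theta$). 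Uniqueness follows because the difference $K$ of two solutions of the asserted form is symmetric traceless with $\partial_i K_{ij}=0$ on all of $\m R^2$ and $K=O(1/r)$; writing $K=\mathbb L W$ gives $\Delta W=0$ globally, so $W$ carries no $\ln r$ term (which would make $\Delta W$ a multiple of the Dirac mass), hence $\nabla W\to0$ and $W$ is a conformal Killing field of $\delta$, i.e. $K=\mathbb L W=0$. The main obstacle is not the inversion of the Laplacian but the two pieces of exact bookkeeping: establishing the cancellation of the frequency $0$ (and all $\alpha$-dependent) parts of $P_j$ — without which the right-hand side of \eqref{eqlp} would fail to lie in $H^0_{\delta+2}$, cf. the introduction — and computing $\int V_j$ correctly, in particular extracting the boundary contribution $\tfrac{\pi\rho}{2}(\cos\eta,\sin\eta)$ and checking that the subtracted tensors leave the total integral unchanged; the weighted product estimates for the final bound are routine but must be run at the borderline weights allowed by $-1<\delta<0$.
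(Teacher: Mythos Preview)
Your approach is essentially the same as the paper's: both reduce the problem to equations $\partial_i K_{ij}=f_j$ with $f_j\in H^0_{\delta+2}$, solve these via the conformal Killing ansatz $K=\mathbb L W$ (the paper's \eqref{formeh}) together with Corollary~\ref{coro} for $\Delta W_j=f_j$, and strip off the explicit $r^{-2}$ sources by the tensors $H_b$ and $-\tfrac{\rho\chi}{4r}M^{-\eta}_3$, after checking that the $\alpha$-dependent $r^{-2}$ contributions from $-\tfrac12\tau\partial_j\lambda$ and $-H_{ij}\partial_i\lambda$ cancel exactly. The only real difference is organizational: the paper splits the right-hand side into three pieces \eqref{h1}--\eqref{h3}, proving a separate proposition for each (Propositions~\ref{prph1}, \ref{prph2}, \ref{prph3}) and inserting the compactly supported transfer term $\tfrac{\rho\chi'}{4r}e_j$ by hand, while you treat $V$ as a whole and run a single frequency decomposition --- this is a bit more streamlined but carries the same content. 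One small caveat: your uniqueness argument via ``write $K=\mathbb L W$'' is not quite airtight, since not every symmetric traceless $K$ is a priori of that form; the clean way is to observe that $\partial_i K_{ij}=0$ makes $K_{11}+iK_{12}$ anti-holomorphic and entire, hence constant (and then zero) by Liouville and the decay $K\in\q H^1_{\delta+1}+O(1/r)$.
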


To prove this proposition, we write $H'= H^{(1)} + H^{(2)} + H^{(3)}$ with
\begin{align}
\label{h1}\partial_i H^{(1)}_{ij} =& -\dot{u}.\partial_j u +\frac{1}{2}\partial_j \widetilde{\tau} -\frac{1}{2}\widetilde{\tau}\partial_j \lambda
-\widetilde{H}_{ij}\partial_i \lambda \\
\nonumber & +\frac{\rho\chi'(r)}{4r}e_j  -\partial_i \widetilde{\lambda}\big((b(\theta)+\rho\cos(\theta-\eta))H_\theta\big)_{ij} -\frac{1}{2}\chi(r)\frac{b(\theta)+\rho\cos(\theta-\eta)}{r}\partial_j \widetilde{\lambda},\\
 \label{h2}\partial_i H^{(2)}_{ij} =&\frac{1}{2} \partial_j \left(\frac{b(\theta)\chi(r)}{r}\right)+ (b(\theta)H_\theta)_{ij} \partial_i (\alpha \chi(r)\ln(r)) +\frac{1}{2}\frac{b(\theta)\chi(r)}{r}\partial_j (\alpha \chi(r)\ln(r)),\\
\label{h3}\partial_i H^{(3)}_{ij} =& \frac{1}{2}\partial_j \left(\frac{\rho \cos(\theta-\eta)\chi(r)}{r}\right) 
-\frac{\rho\chi'(r)}{4r}e_j 
+(\rho\cos(\theta-\eta)H_\theta)_{ij} \partial_i \left(\alpha \chi(r)\ln(r)\right)\\
\nonumber &+\frac{1}{2}\frac{\rho\chi(r)\cos(\theta-\eta)}{r}\partial_j (\alpha \chi(r)\ln(r)),
\end{align}
where $e_1=\cos(\eta)$ and $e_2= \sin(\eta)$.
The following three propositions allow us to solve \eqref{h1}, \eqref{h2} and \eqref{h3}.

\begin{prp}\label{prph1} 
There exists a unique solution of \eqref{h1} of the form
$$H^{(1)} = \frac{m\chi(r)}{r}\left(\begin{array}{cc}
                                          \cos(\theta+\phi) & \sin(\theta+\phi) \\ 
                                           \sin(\theta+\phi) & -\cos(\theta+\phi)
                                         \end{array} \right)\\ +\widetilde{H}^{(1)},$$
with $\widetilde{H}^{(1)} \in H^1_{\delta +1}$ and
$m\cos(\phi)$ and $m\sin(\phi)$ are defined by \eqref{cos} and \eqref{sin}.
Moreover, $\widetilde{H}^{(1)}$ satisfies the estimate 
\begin{equation}\label{est}
 \begin{split}
\|\widetilde{H'}\|_{\q H^1_{\delta+1}} \lesssim &\|\dot{u}.\nabla u\|_{H^0_{\delta +2}} +(1+|\alpha|+\|\widetilde{\lambda}\|_{H^2_\delta})\|\widetilde{\tau}\|_{H^1_{\delta+1}}\\
 &+ |\rho| + (\|\widetilde{H}\|_{\q H^1_{\delta+1}} +\|b\|_{L^\infty}+|\rho|) \|\widetilde{\lambda}\|_{H^2_\delta} +|\alpha|\|\widetilde{H}\|_{\q H^1_{\delta+1}}.
\end{split}
\end{equation}
\end{prp}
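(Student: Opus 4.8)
The plan is to solve the divergence equation \eqref{h1} by reducing it to the Laplace equation via a potential-type ansatz, then carefully extract the non-decaying piece of the solution as an explicit harmonic tensor on an annulus.

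First I would observe that the right-hand side of \eqref{h1}, call it $f_j$, splits into two parts: a ``good'' part $f_j^{\mathrm{good}}$ consisting of all terms that lie in $H^0_{\delta+2}$, and a ``bad'' part coming from the slowly decaying pieces carried by $H_b$, $H_{\rho,\eta}$ and the $\frac{\rho\chi'(r)}{4r}e_j$ term, which only decay like $\frac1r$ (so their product with $\partial\widetilde\lambda$ is in $H^0_{\delta+2}$, but the term $\frac{\rho\chi'}{4r}e_j$ and the explicit $\partial_j\tau$-type contributions are genuinely borderline). Actually, on inspection, every product term $\widetilde H_{ij}\partial_i\lambda$, $\widetilde\tau\partial_j\lambda$, $\partial_i\widetilde\lambda (H_b+H_{\rho,\eta})_{ij}$, $\chi\frac{b+\rho\cos(\theta-\eta)}{r}\partial_j\widetilde\lambda$ and $\dot u\partial_j u$ lies in $H^0_{\delta+2}$ by Proposition \ref{produit} and Lemma \ref{produit2} (using $\widetilde\lambda\in H^2_\delta\subset C^{0+\alpha}_\delta$ by Proposition \ref{holder}, and $\partial\widetilde\lambda\in H^1_{\delta+1}$), while $\partial_j\widetilde\tau\in H^0_{\delta+2}$ directly and $\frac{\rho\chi'(r)}{4r}e_j$ is compactly supported hence in every weighted space. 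So in fact $f\in H^0_{\delta+2}$. The obstruction to solving $\partial_i H^{(1)}_{ij}=f_j$ with $H^{(1)}\in\mathcal H^1_{\delta+1}$ is therefore not the decay of $f$ but the cokernel: the operator $H\mapsto \partial_i H_{ij}$ from $\mathcal H^2_{\delta}$-type tensors to $H^0_{\delta+2}$ (or rather the relevant div on symmetric traceless tensors) has a finite-dimensional cokernel corresponding to the conditions $\int f_j\,\psi^{(k)}=0$ against conformal Killing data, exactly as the Laplacian on $\mathbb R^2$ has cokernel the constants.

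The concrete approach: write a symmetric traceless tensor as $H_{ij}=\partial_i V_j+\partial_j V_i-\delta_{ij}\partial_k V_k$ for a vector field $V$ (the conformal Killing operator), so that $\partial_i H_{ij}=\Delta V_j$. Then \eqref{h1} becomes $\Delta V_j = f_j$ with $f\in H^0_{\delta+2}$, and Corollary \ref{coro} gives a solution $V_j = \big(\int f_j\big)\chi(r)\ln r + v_j$ with $v\in H^2_\delta$ and the estimate $\|v\|_{H^2_\delta}\lesssim\|f\|_{H^0_{\delta+2}}$. Plugging back, the $v$-part contributes $\partial v + (\partial v)^T - \mathrm{tr}\in\mathcal H^1_{\delta+1}$, which is the $\widetilde H^{(1)}$ term, while the logarithmic part $\big(\int f_j\big)\chi(r)\ln r\, $ in $V_j$ produces, after applying the conformal Killing operator, precisely a tensor of the form $\frac{m\chi(r)}{r}\left(\begin{smallmatrix}\cos(\theta+\phi)&\sin(\theta+\phi)\\ \sin(\theta+\phi)&-\cos(\theta+\phi)\end{smallmatrix}\right)$ modulo a compactly supported (hence $\mathcal H^1_{\delta+1}$) error coming from derivatives hitting $\chi$. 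Indeed $\nabla(\ln r)=\frac{x}{r^2}$, and $\partial_i\frac{x_j}{r^2}+\partial_j\frac{x_i}{r^2}-\delta_{ij}\partial_k\frac{x_k}{r^2}$ is exactly $\frac{1}{r^2}$ times the matrix $-2\left(\begin{smallmatrix}\cos 2\theta&\sin 2\theta\\ \sin 2\theta&-\cos 2\theta\end{smallmatrix}\right)$ — wait, that decays like $1/r^2$, so I instead need $V_j$ itself to contain a linear-growth or $\ln r$ term whose conformal Killing image is $O(1/r)$; the correct statement is that $\big(\int f_j\big)\chi(r)\ln r$ has $\partial_i$ of it $\sim \big(\int f_j\big)\frac{x_i}{r^2}$, and symmetrizing gives the $\frac{\chi(r)}{r}$-homogeneous-degree-$(-1)$ tensor with the stated angular dependence, with $m\cos\phi$, $m\sin\phi$ read off as linear combinations of $\int f_1$ and $\int f_2$. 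Computing $\int f_j$ termwise and using $\int\big(\frac12\partial_j\widetilde\tau\big)=0$, $\int\frac{\rho\chi'}{4r}e_j\,dx = \frac{\rho}{4}e_j\int_0^\infty\chi'(r)\,dr\cdot(\text{angular})$, and the divergence-theorem contributions from $H_b$ and $H_{\rho,\eta}$ (which is where the $\frac{\pi\rho}{2}\cos\eta$, $\frac{\pi\rho}{2}\sin\eta$ terms come from, via $\int \chi\,\mathrm{div}(H_{\rho,\eta})$ picking up a boundary term at infinity) yields exactly \eqref{cos}–\eqref{sin}.

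The estimate \eqref{est} then follows by collecting $\|f\|_{H^0_{\delta+2}}$ through Proposition \ref{produit} and Lemma \ref{produit2}: the $\dot u\nabla u$ term gives $\|\dot u\nabla u\|_{H^0_{\delta+2}}$; $\widetilde\tau\partial_j\lambda$ and $\partial_j\widetilde\tau$ give $(1+|\alpha|+\|\widetilde\lambda\|_{H^2_\delta})\|\widetilde\tau\|_{H^1_{\delta+1}}$; the $\frac{\rho\chi'}{4r}e_j$ term and the residual pieces of $H_{\rho,\eta}$ give $|\rho|$; $\partial_i\widetilde\lambda\,(H_b+H_{\rho,\eta})_{ij}$ and $\chi\frac{b+\rho\cos(\theta-\eta)}{r}\partial_j\widetilde\lambda$ give $(|b|+|\rho|)\|\widetilde\lambda\|_{H^2_\delta}$; and $\widetilde H_{ij}\partial_i\lambda$ gives $\|\widetilde H\|_{\mathcal H^1_{\delta+1}}\|\widetilde\lambda\|_{H^2_\delta}+|\alpha|\|\widetilde H\|_{\mathcal H^1_{\delta+1}}$. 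Uniqueness of the decomposition follows from the injectivity part of Theorem \ref{laplacien}/Corollary \ref{coro}: if two such $H^{(1)}$ solved \eqref{h1}, their difference would be a symmetric traceless tensor of the form $\frac{m\chi(r)}{r}(\cdots)+\widetilde H$ with $\widetilde H\in\mathcal H^1_{\delta+1}$ and zero divergence, forcing (via the potential, modulo harmonic polynomials excluded by the weight) $m=0$ and $\widetilde H=0$. The main obstacle I anticipate is the bookkeeping around the conformal Killing operator and the boundary terms at infinity: verifying that the homogeneous-degree-$(-1)$ tensors arising from $\partial(\chi\ln r)$ are precisely in the two-parameter family $\frac{m\chi(r)}{r}\left(\begin{smallmatrix}\cos(\theta+\phi)&\sin(\theta+\phi)\\ \sin(\theta+\phi)&-\cos(\theta+\phi)\end{smallmatrix}\right)$, and carefully computing the constants $\frac{\pi\rho}{2}\cos\eta$, $\frac{\pi\rho}{2}\sin\eta$ from the interaction of $\mathrm{div}$ with the $\chi$-cutoff on $H_{\rho,\eta}$ — everything else is a routine application of Corollary \ref{coro} together with the product and multiplication lemmas.
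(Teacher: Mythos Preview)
Your approach is the same as the paper's: reduce \eqref{h1} to $\Delta V_j = f_j$ via the conformal Killing ansatz $H_{ij}=\partial_i V_j+\partial_j V_i-\delta_{ij}\partial_k V_k$, apply Corollary~\ref{coro}, and read off the leading $\frac{m\chi(r)}{r}$ tensor from the $\chi(r)\ln r$ part of $V$ (the paper isolates exactly this step as Lemma~\ref{lemme}). One correction to your bookkeeping: the constants $\frac{\pi\rho}{2}\cos\eta$ and $\frac{\pi\rho}{2}\sin\eta$ in \eqref{cos}--\eqref{sin} come directly from
\[
\int_{\mathbb R^2}\frac{\rho\chi'(r)}{4r}\,e_j\,dx \;=\; \frac{\rho e_j}{4}\cdot 2\pi\int_0^\infty\chi'(r)\,dr \;=\; \frac{\pi\rho}{2}\,e_j,
\]
not from any ``boundary term at infinity via $\int\chi\,\mathrm{div}(H_{\rho,\eta})$'' --- no such divergence appears on the right-hand side of \eqref{h1}.
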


\begin{prp}\label{prph2}
There exists an unique $\widetilde{H}^{(2)} \in \q H^1_{\delta+1}$ such that $H^{(2)} = b(\theta)H_\theta +\widetilde{H}^{(2)}$  satisfies \eqref{h2}.
Moreover we have the estimate
$$\left\|\widetilde{H}^{(2)}\right\|_{\q H^1_{\delta+1}} \lesssim \|b\|_{L^\infty}.$$
\end{prp}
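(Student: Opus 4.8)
The plan is to solve the linear divergence equation \eqref{h2} explicitly by exploiting its $2\pi$-periodicity in $\theta$ and the $\frac{1}{r}$ homogeneity of the source away from the origin. First I would observe that for $r\geq 2$, where $\chi\equiv 1$, the right-hand side of \eqref{h2} is a finite linear combination of terms of the form $\frac{1}{r^2}$ times trigonometric polynomials in $\theta$ of definite frequency: $\partial_j(b/r)$ contributes frequency-$1$ modes, while $(H_b)_{ij}\partial_i(\alpha\ln r)$ and $\frac{b}{r}\partial_j(\alpha\ln r)$ contribute, through the product of $\cos(2\theta),\sin(2\theta)$ with $\cos\theta,\sin\theta$, both a frequency-$1$ and a frequency-$3$ component. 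The key algebraic fact is that a symmetric traceless $2$-tensor of the form $\frac{c}{r}\left(\begin{array}{cc}\cos(k\theta+\psi) & \sin(k\theta+\psi)\\ \sin(k\theta+\psi) & -\cos(k\theta+\psi)\end{array}\right)$ has divergence proportional to $\frac{1}{r^2}$ times frequency-$(k-1)$ and frequency-$(k+1)$ trigonometric data with explicitly computable coefficients; so I would make an ansatz for $H^{(2)}-H_b$ (the leading piece) as a combination of such tensors at frequencies $1$ and $3$ and match coefficients mode by mode.

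Concretely, the steps are: (i) compute $\partial_i(H_b)_{ij}$ and verify that $H_b$, which by definition equals $-\frac{b\chi(r)}{2r}(\ldots)$, already accounts for the $\frac{1}{2}\partial_j(b\chi(r)/r)$ term up to terms supported in $\{1\leq r\leq 2\}$ (this is precisely why the coefficient $1/2$ appears in the definition of $H_b$); (ii) treat the remaining source, which is $O(\alpha b)$ and supported either in the annulus or decaying like $\frac{1}{r^2}$ with frequency-$1$ and frequency-$3$ angular dependence, and split off an explicit homogeneous tensor $\frac{m'\chi(r)}{r}(\ldots)$ at frequency $1$ plus $\frac{n'\chi(r)}{r}(\ldots)$ at frequency $3$ chosen so that the leading $\frac{1}{r^2}$ behavior of the divergence is cancelled; (iii) the leftover source then lies in $H^0_{\delta+2}$, and I would solve the resulting equation $\partial_i \widetilde H^{(2)}_{ij}=(\text{source in }H^0_{\delta+2})$ for a traceless symmetric $\widetilde H^{(2)}\in\q H^1_{\delta+1}$ using the same linear elliptic theory (via the Laplacian and Corollary \ref{regu}, writing $\widetilde H^{(2)}$ in terms of a potential) that underlies Proposition \ref{prph1}; (iv) since every explicit piece introduced is homogeneous of degree $-1$ and $O(b)$ or $O(\alpha b)$, and since $|\alpha|\lesssim\varepsilon\lesssim 1$, all of these contribute $\lesssim |b|$ to the $\q H^1_{\delta+1}$ norm, giving the stated estimate $\|\widetilde H^{(2)}\|_{\q H^1_{\delta+1}}\lesssim |b|$.

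Uniqueness follows from the corresponding uniqueness statement for the linear divergence equation with solution sought in the class ``explicit $\frac{1}{r}$-homogeneous tensor of the prescribed frequencies plus $\q H^1_{\delta+1}$ remainder'': if two such solutions existed, their difference would be a traceless symmetric tensor with zero divergence whose frequency-$0,2,4,\dots$ parts vanish and whose remainder is in $\q H^1_{\delta+1}$, and an integration-by-parts / Fourier-in-$\theta$ argument (or appeal to the injectivity already established in Proposition \ref{prph1}) forces it to vanish. I would phrase this by reducing, as in the proof of Proposition \ref{prph1}, the divergence equation to a scalar Poisson-type equation for a stream-function-like potential and invoking Theorem \ref{laplacien} / Corollary \ref{regu}, whose injectivity on $H^2_\delta$ with $-1<\delta<0$ gives uniqueness of the remainder once the explicit homogeneous modes are pinned down by matching.

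The main obstacle I anticipate is bookkeeping rather than conceptual: one must carefully track which frequency modes are generated by the products $\cos(2\theta)\cdot\cos\theta$ etc., get the trigonometric identities and the exact numerical coefficients right (so that the cancellation of the $\frac{1}{r^2}$-tails is exact and not merely up to lower order), and handle the annulus-supported error terms coming from $\chi'(r)$, which are compactly supported and hence harmless but must be explicitly absorbed into the $H^0_{\delta+2}$ source. A secondary subtlety is making sure that the chosen explicit homogeneous tensor at frequency $1$ does not collide with the $\frac{m\chi(r)}{r}(\ldots)$ term already extracted in Proposition \ref{prph1}; this is why the statement of Proposition \ref{prph2} asks only for $H^{(2)}=H_b+\widetilde H^{(2)}$ with $\widetilde H^{(2)}\in\q H^1_{\delta+1}$ — i.e. here no residual $\frac{1}{r}$-homogeneous piece survives, and I would verify that the frequency-$1$ and frequency-$3$ homogeneous tensors needed to kill the $O(\alpha b)$ tails can be taken with coefficients that are themselves $O(\alpha b)$ and thus consistent with folding the whole explicit correction into $H_b$ up to an $\q H^1_{\delta+1}$ error of size $\lesssim|b|$.
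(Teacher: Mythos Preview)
Your plan has the right shape (compute the source, subtract the divergence of $H_b$, feed the remainder into Lemma \ref{lemme}), but it misses the one algebraic fact that makes the proposition true as stated: the $\alpha$-dependent terms in the right-hand side of \eqref{h2} cancel \emph{identically}. Indeed, for $j=1$ one has $(H_b)_{i1}\partial_i(\alpha\chi\ln r)=-\frac{\alpha b\chi}{2r}\big(\chi'\ln r+\tfrac{\chi}{r}\big)(\cos 2\theta\cos\theta+\sin 2\theta\sin\theta)=-\frac{\alpha b\chi}{2r}\big(\chi'\ln r+\tfrac{\chi}{r}\big)\cos\theta$, which is exactly the negative of $\tfrac12\frac{b\chi}{r}\partial_1(\alpha\chi\ln r)$; the same happens for $j=2$. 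So there is no $O(\alpha b)$ tail at all---no frequency-$1$ piece, no frequency-$3$ piece---and after subtracting $\partial_i(H_b)_{ij}$ the entire residual source is $\frac{b\chi'(r)}{r}(\cos\theta,\sin\theta)$, compactly supported in the annulus and with vanishing integrals. Lemma \ref{lemme} then gives $\widetilde H^{(2)}\in\q H^1_{\delta+1}$ directly.

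The gap in your proposal is the final sentence: you suggest that any leftover $\tfrac{1}{r}$-homogeneous tensors with $O(\alpha b)$ coefficients could be ``folded into $H_b$ up to an $\q H^1_{\delta+1}$ error.'' This is false as a matter of function spaces: for $-1<\delta<0$ a nonzero multiple of $\frac{\chi(r)}{r}(\cos(k\theta+\psi),\ldots)$ is never in $\q H^1_{\delta+1}$, regardless of how small the coefficient is, and $H_b$ is a fixed tensor that cannot absorb additional homogeneous modes. If the $\alpha$-terms did not cancel, the proposition would simply be wrong as stated. So the exact trigonometric identity $\cos 2\theta\cos\theta+\sin 2\theta\sin\theta=\cos\theta$ (and its sine counterpart) is not bookkeeping---it is the heart of the proof, and once you use it the argument collapses to a two-line application of Lemma \ref{lemme}.
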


\begin{prp}\label{prph3}
There exists an unique $\widetilde{H}^{(3)} \in \q H^1_{\delta+1}$ such that
$$H^{(3)}= -\frac{\rho \chi(r)}{4r}\left(\begin{array}{cc}
                                          \cos(3\theta-\eta) & \sin(3\theta-\eta) \\ 
                                           \sin(3\theta-\eta) & -\cos(3\theta-\eta)
                                         \end{array} \right)+\widetilde{H}^{(3)},$$
satisfies \eqref{h3}. Moreover, we have the estimate
$$\left\|\widetilde{H}^{(3)}\right\|_{\q H^1_{\delta+1}} \lesssim |\rho|.$$
\end{prp}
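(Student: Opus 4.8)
The plan is to solve the vector equation \eqref{h3} explicitly by Fourier analysis on the angle $\theta$, peeling off the slowly-decaying part of the right-hand side by an explicit ansatz and absorbing the remainder into a $\q H^1_{\delta+1}$ correction. First I would observe that every term on the right-hand side of \eqref{h3} is of the form $\frac{\chi(r)}{r^2}\times(\text{trigonometric polynomial in }\theta)$ up to terms which are already better behaved: the gradient $\partial_j\bigl(\frac{\rho\cos(\theta-\eta)\chi(r)}{r}\bigr)$ produces a $\frac{1}{r^2}$ piece with angular frequencies $1$ from the $\partial_r$ part and frequency dependence from the $\frac1r\partial_\theta$ part (both combining into frequencies $0$ and $2$ shifted by $\eta$), the product $(H_{\rho,\eta})_{ij}\partial_i(\alpha\chi(r)\ln(r))$ contributes $\frac{\alpha\rho}{r^2}$ times frequency-$1$ and frequency-$3$ angular factors, and similarly for the last term; the cutoff-derivative term $\frac{\rho\chi'(r)}{4r}e_j$ is compactly supported, hence trivially in $H^0_{\delta+2}$, and the $\chi'$, $\chi''$ pieces generated when differentiating $\chi(r)\ln(r)$ and $\chi(r)/r$ are compactly supported as well. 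So modulo compactly supported and $O(\varepsilon^2)$-type remainders, the right-hand side is $\frac{\chi(r)}{r^2}$ times a fixed trigonometric polynomial with frequencies in $\{0,1,2,3\}$.

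Next I would recall the divergence identity for the symmetric traceless tensor ansatz: if $T = \frac{f(r)}{r}\left(\begin{smallmatrix}\cos k\theta & \sin k\theta\\ \sin k\theta & -\cos k\theta\end{smallmatrix}\right)$ (and its $\sin$-analogue), then $\partial_i T_{ij}$ is again of the form $\frac{(\cdot)}{r^2}$ times frequency $k\pm 1$ angular factors, with explicit coefficients — this is exactly the computation that produces the specific prefactors $\frac{\rho}{4}$ and the frequency-$3$ companion in the definition of $H_{\rho,\eta}$. The key algebraic point is that the map sending the angular/radial data of such a tensor ansatz to the angular/radial data of its divergence is, frequency by frequency, an invertible $2\times 2$ (or scalar) linear map, \emph{except} possibly at the frequencies where the kernel of the divergence operator lives. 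The frequency-$3$ term $-\frac{\rho\chi(r)}{4r}\left(\begin{smallmatrix}\cos(3\theta-\eta)&\sin(3\theta-\eta)\\ \sin(3\theta-\eta)&-\cos(3\theta-\eta)\end{smallmatrix}\right)$ in the statement is precisely the explicit tensor whose divergence cancels the frequency-$3$ and part of the frequency-$1$ content of the $\frac{1}{r^2}$-part of the right-hand side of \eqref{h3}; I would verify this by direct substitution. Having subtracted this explicit piece, what remains on the right-hand side is: the compactly supported terms, plus a genuinely $\frac{1}{r^2}$ frequency-$1$ residue. For this residue I invoke a lemma (implicit in the companion Proposition \ref{prph1}, whose proof I may assume is available earlier) giving a solution $\widetilde{H}^{(3)}\in\q H^1_{\delta+1}$ of $\partial_i\widetilde H^{(3)}_{ij} = (\text{right-hand side of }\eqref{h3}) - \partial_i\bigl(\text{explicit frequency-}3\text{ tensor}\bigr)_{ij}$ — here the crucial point is that the $\frac1{r^2}$ residue must have \emph{vanishing integral against the relevant harmonic mode}, equivalently its average and its frequency-$1$ moment must vanish, so that Corollary \ref{coro}/Theorem \ref{laplacien} applies after reducing the first-order divergence equation to a Poisson equation for a potential. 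Then the estimate $\|\widetilde H^{(3)}\|_{\q H^1_{\delta+1}}\lesssim|\rho|$ follows because every surviving source term carries a factor $\rho$ (or $\alpha\rho$, but $|\alpha|\lesssim\varepsilon$ so that is absorbed), and the operator bound from Theorem \ref{laplacien} with the fixed $\delta\in(-1,0)$ gives a universal constant.

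The main obstacle I anticipate is the bookkeeping of angular frequencies and the verification that the $\frac1{r^2}$-residue left after subtracting $H_b$'s analogue at frequency $3$ indeed satisfies the solvability condition of Theorem \ref{laplacien} — i.e.\ that there is no obstruction living in $\mathcal H_0\cup\mathcal H_1$ when one passes to the potential formulation. Concretely, one must check that the frequency-$0$ and frequency-$1$ parts of the source, after the explicit subtraction, integrate to zero; the frequency-$0$ part integrating to zero is a divergence-theorem identity (the right-hand side of \eqref{h3} is itself a divergence plus the compactly supported $\chi'$ term, whose integral is likewise controlled), while the frequency-$1$ obstruction is exactly what dictates that \emph{only} frequency $3$, and not frequency $1$, needs an explicit tensor here — the frequency-$1$ compensation having already been allotted to $H_{\rho,\eta}$'s first summand and to the choice of $\rho,\eta$ in Proposition \ref{solh}. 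Once this is pinned down, uniqueness of $\widetilde H^{(3)}$ follows from the injectivity statement in Theorem \ref{laplacien} (the homogeneous divergence equation has no nonzero solution of the required decay after fixing the explicit part), and the proof is complete.
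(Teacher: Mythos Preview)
Your plan misses the central algebraic point of the proof: the $\alpha$-dependent terms on the right-hand side of \eqref{h3} cancel \emph{exactly}. In the paper one computes directly that
\[
(H_{\rho,\eta})_{ij}\,\partial_i\bigl(\alpha\chi(r)\ln r\bigr)+\tfrac12\,\frac{\rho\chi(r)\cos(\theta-\eta)}{r}\,\partial_j\bigl(\alpha\chi(r)\ln r\bigr)=0,
\]
which is precisely why the two summands of $H_{\rho,\eta}$ (frequencies $\theta+\eta$ and $3\theta-\eta$) were chosen as they were. Once this cancellation is used, the right-hand side of \eqref{h3} reduces to $-\frac{\rho\chi(r)}{2r^2}\cos(2\theta-\eta)+\frac{\rho\chi'(r)}{4r}\cos(2\theta-\eta)$ (and the analogous $\sin$ expression for $j=2$); the divergence of the explicit frequency-$3$ tensor matches the $\frac{\chi}{r^2}$ part exactly, leaving only a compactly supported source $\frac{\rho\chi'(r)}{2r}(\cos(2\theta-\eta),\sin(2\theta-\eta))$ for $\widetilde H^{(3)}$. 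Lemma \ref{lemme} then applies immediately (the integrals vanish by angular integration) and gives $\|\widetilde H^{(3)}\|_{\q H^1_{\delta+1}}\lesssim|\rho|$ with no $\alpha$-dependence at all.

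Your version, by contrast, predicts a ``genuinely $\frac1{r^2}$ frequency-$1$ residue'' surviving after the subtraction. This would be fatal: for $-1<\delta<0$ one has $\frac{\chi(r)}{r^2}\notin H^0_{\delta+2}$ (the weighted $L^2$ integral $\int r^{2\delta+1}\,dr$ diverges), so no orthogonality condition against $\mathcal H_0\cup\mathcal H_1$ can rescue the application of Lemma \ref{lemme} or Theorem \ref{laplacien}---membership in the space fails outright, not merely the range condition. Your remark that the $\alpha\rho$ contribution ``is absorbed since $|\alpha|\lesssim\varepsilon$'' is a symptom of the same oversight: the stated estimate is $\lesssim|\rho|$ uniformly in $\alpha$, which only holds because those terms vanish identically. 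The fix is simply to carry out the two-line trigonometric computation showing the $\alpha$-cancellation; after that the rest of your outline (subtract the explicit tensor, apply Lemma \ref{lemme} to the compactly supported remainder, read off the estimate) is correct and matches the paper.
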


Since the sum of the right-hand sides of \eqref{h1}, \eqref{h2} and \eqref{h3} is equal to
$$-H_{ij}\partial_i \lambda -\dot{u}\partial_j u + \frac{1}{2} \partial_j \tau-\frac{1}{2} \tau\partial_j \lambda,$$
Proposition \ref{solh} is a straightforward consequence of Propositions \ref{prph1}, \ref{prph2} and \ref{prph3}.
Thus, in the rest of this section, we prove Proposition \ref{prph1}, \ref{prph2} and \ref{prph3}, respectively in section
\ref{sh1}, \ref{sh2} and \ref{sh3}.

\subsection{Proof of Proposition \ref{prph1}}\label{sh1}
We need the following lemma.
\begin{lm}\label{lemme} Let $f_1,f_2 \in H^0_{\delta+2}$. The equation
$$\partial_i K_{ij} = f_j,$$
with $K$ a symmetric traceless tensor, has a unique solution of the form
$$K = \frac{m\chi(r)}{r}\left(\begin{array}{cc}
                                          \cos(\theta+\phi) & \sin(\theta+\phi) \\ 
                                           \sin(\theta+\phi) & -\cos(\theta+\phi)
                                         \end{array} \right)\\ +\widetilde{K},$$
with 
$$m(\cos(\phi),\sin(\phi)) = \frac{1}{2\pi}\left(\int f_1, \int f_2\right)$$
 and $\widetilde{K} \in \q H^1_{\delta +1}$ with 
$$\|\widetilde{K} \|_{ H^1_{\delta +1}} \lesssim \|f_1\|_{H^0_{\delta+2}} + \|f_2\|_{H^0_{\delta+2}}.$$
\end{lm}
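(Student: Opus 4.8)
\textbf{Proof plan for Lemma \ref{lemme}.}

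The plan is to split $K$ into an explicit ``far-field'' piece carrying the slow $1/r$ decay dictated by the net charges $\int f_1, \int f_2$, plus a remainder $\widetilde K$ with genuine decay, and then solve for $\widetilde K$ using the Laplacian theory of Theorem \ref{laplacien}/Corollary \ref{regu}. First I would observe that for a symmetric traceless $2$-tensor in two dimensions, writing $K$ with entries $(a,b;b,-a)$, the divergence equation $\partial_i K_{ij}=f_j$ becomes the Cauchy--Riemann-type system $\partial_1 a + \partial_2 b = f_1$, $\partial_2 a - \partial_1 b = f_2$. Equivalently, setting $w = a - ib$ (or introducing a potential), one reduces to inverting a first-order operator whose ``square'' is the scalar Laplacian. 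Concretely, I would look for $a = \partial_1 P + \partial_2 Q$, $b = \partial_2 P - \partial_1 Q$ with $\Delta P = f_1$ componentwise-type splitting — more efficiently, note the ansatz $K_{ij} = \partial_i V_j + \partial_j V_i - \delta_{ij}\,\partial_k V_k$ for a vector field $V$ makes $K$ automatically symmetric traceless, and $\partial_i K_{ij} = \Delta V_j$. So it suffices to solve $\Delta V_j = f_j$ for $j=1,2$.

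Next I would apply Corollary \ref{coro} to each scalar equation $\Delta V_j = f_j$ with $f_j \in H^0_{\delta+2}$ and $-1<\delta<0$: this yields $V_j = \big(\int f_j\big)\chi(r)\ln r + v_j$ with $v_j \in H^2_\delta$ and $\|v_j\|_{H^2_\delta}\lesssim \|f_j\|_{H^0_{\delta+2}}$. Forming $K_{ij} = \partial_i V_j + \partial_j V_i - \delta_{ij}\partial_k V_k$ and computing the contribution of the $\chi(r)\ln r$ terms explicitly (using $\partial_i \ln r = x_i/r^2 = (\cos\theta,\sin\theta)/r$), the leading part becomes exactly a matrix of the form $\frac{m\chi(r)}{r}\big(\begin{smallmatrix}\cos(\theta+\phi)&\sin(\theta+\phi)\\ \sin(\theta+\phi)&-\cos(\theta+\phi)\end{smallmatrix}\big)$ with $m(\cos\phi,\sin\phi) = (\int f_1,\int f_2)$ up to a fixed nonzero normalization constant (which I would track and absorb; the angular profile $\cos(\theta+\phi)$ is forced since $\partial_i\partial_j \ln r$ and $\delta_{ij}\Delta(\chi\ln r)$ combine to a pure ``$e^{i\theta}$-type'' tensor harmonic). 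The remainder $\widetilde K$ collects $\partial_i v_j + \partial_j v_i - \delta_{ij}\partial_k v_k$, which lies in $H^1_{\delta+1}$ by Lemma 2.3 (derivatives of $H^2_\delta$ functions are in $H^1_{\delta+1}$), together with error terms coming from derivatives falling on $\chi$ (compactly supported in the annulus $1\le r\le 2$, hence in every weighted space) and from the difference between $\chi(r)\ln r$ and a genuine solution near the origin; all of these are controlled by $\|f_1\|_{H^0_{\delta+2}}+\|f_2\|_{H^0_{\delta+2}}$, giving the stated estimate.

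For uniqueness: if $K$ has the stated form with $\widetilde K \in \mathcal H^1_{\delta+1}$ and $\partial_i K_{ij}=f_j$, then two such solutions differ by a symmetric traceless tensor $K^0$ with $\partial_i K^0_{ij}=0$ whose leading coefficient vanishes (so $m=0$, i.e. $K^0 \in \mathcal H^1_{\delta+1}$). I would argue $K^0 = 0$: divergence-free symmetric traceless $2$-tensors on $\mathbb R^2$ are, via the potential representation above, second derivatives of a harmonic function, and the decay $K^0 \in H^1_{\delta+1}$ with $-1<\delta<0$ forces that harmonic function to have sub-linear growth controlled tightly enough that it must be affine, hence $K^0\equiv 0$; alternatively one invokes the injectivity part of Theorem \ref{laplacien} directly on the potentials. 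The main obstacle is the bookkeeping in the second step: one must verify that the $\chi(r)\ln r$ pieces produce \emph{precisely} the advertised angular matrix (no spurious $3\theta$ or constant modes) and identify the normalization constant relating $m(\cos\phi,\sin\phi)$ to $(\int f_1,\int f_2)$ — this is a short but careful computation with $\partial_i\partial_j(\chi\ln r)$ — and that all the $\chi'$-supported remainders genuinely land in $H^1_{\delta+1}$ rather than merely $H^1_{\delta}$.
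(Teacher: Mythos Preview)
Your proposal is correct and follows essentially the same approach as the paper: the paper also uses the ansatz $K_{ij} = \partial_i Y_j + \partial_j Y_i - \delta_{ij}\partial^k Y_k$, reduces to $\Delta Y_j = f_j$, applies Corollary \ref{coro}, and reads off the leading $\chi(r)/r$ tensor from the $\chi(r)\ln r$ part of $Y_j$. The normalization constant you hedge on is exactly $1$ (since $\partial_j(\chi\ln r) = \chi(r)x_j/r^2$ modulo compactly supported terms), and the paper in fact does not spell out the uniqueness argument you sketch.
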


We postpone the proof of Lemma \ref{lemme} to the end of the section, and use it to prove Proposition \ref{prph1}.
\begin{proof}[Proof of Proposition \ref{prph1}]
We apply Lemma \ref{lemme} with
\begin{equation}\label{fj}
\begin{split}
  f_j = &-\dot{u}.\partial_j u +\frac{1}{2}\partial_j \widetilde{\tau} -\frac{1}{2}\widetilde{\tau}\partial_j \lambda
-\widetilde{H}_{ij}\partial_i \lambda  +\frac{\rho\chi'(r)}{4r}e_j \\
&-\partial_i \widetilde{\lambda}\big((b(\theta)+\rho \cos(\theta-\eta))H_\theta\big)_{ij} -\frac{1}{2}\chi(r)\frac{b+\rho\cos(\theta-\eta)}{r}\partial_j \widetilde{\lambda}, \quad j=1,2.\\ 
\end{split}
\end{equation}
We first check that $f_j$ belongs to $H^0_{\delta+2}$. 
Since $\widetilde{\tau} \in H^1_{\delta+1}$, we have $\partial_j \widetilde{\tau} \in H^0_{\delta+2}$ with
\begin{equation}\label{esta}
\|\partial_j \widetilde{\tau}\|_{H^0_{\delta+2}} \lesssim \|\widetilde{\tau}\|_{H^1_{\delta+1}}.
\end{equation}
Moreover, thanks to Lemma \ref{produit2}, we have $\frac{\chi(r)}{r}\widetilde{\tau} \in H^1_{\delta+2}$. Since
$\widetilde{\lambda} \in H^2_\delta$, we have $\partial_j \widetilde{\lambda} \in H^1_{\delta+1}$ and therefore,
thanks to Proposition \ref{produit}, $\widetilde{\tau}\partial_j \widetilde{\lambda} \in H^0_{\delta+2}$. Consequently we have the estimate
\begin{equation}\label{estb}
\|\widetilde{\tau}\partial_j \lambda\|_{H^0_{\delta+2}} \lesssim (|\alpha| + \|\widetilde{\lambda}\|_{H^2_\delta})\|\widetilde{\tau}\|_{H^1_{\delta+1}}.
\end{equation}
In the same way, we have the estimates
\begin{align}
 \label{estc}&\|\widetilde{H}_{ij}\partial_i \lambda\|_{H^0_{\delta+2}} \lesssim (|\alpha| + \|\widetilde{\lambda}\|_{H^2_\delta})\|\widetilde{H}\|_{\q H^1_{\delta+1}},\\
\label{estd}&\left\|\partial_i \widetilde{\lambda}((b(\theta)+\rho \cos(\theta-\eta))H_\theta)_{ij} -\frac{1}{2}\chi(r)\frac{b+\rho\cos(\theta-\eta)}{r}\partial_j \widetilde{\lambda}\right\|_{H^0_{\delta+2}}
\lesssim (\|b\|_{L^\infty}+ |\rho|)\|\widetilde{\lambda}\|_{H^2_\delta}.
\end{align}
Since $\chi'$ is compactly supported, we have 
\begin{equation}\label{este}
 \left\| \frac{\rho\chi'(r)}{4r}e_j\right\|_{H^0_{\delta+2}}\lesssim |\rho|.
\end{equation}
\eqref{esta}, \eqref{estb}, \eqref{estc}, \eqref{estd} and \eqref{este} yield
\begin{equation}\label{estf2}
 \begin{split}
\|f_1\|_{H^0_{\delta+2}} + \|f_2\|_{H^0_{\delta+ 2}}
\lesssim& \|\dot{u}.\nabla u\|_{H^0_{\delta +2}} +(1+|\alpha|+\|\widetilde{\lambda}\|_{H^2_\delta})\|\widetilde{\tau}\|_{H^1_{\delta+1}}\\
 &+ |\rho| + (\|\widetilde{H}\|_{\q H^1_{\delta+1}} +\|b\|_{L^\infty}+|\rho|) \|\widetilde{\lambda}\|_{H^2_\delta} +|\alpha|\|\widetilde{H}\|_{\q H^1_{\delta+1}}.
\end{split}
\end{equation}
Therefore, Lemma \ref{lemme} implies that we have a unique solution of \eqref{h1} of the form
$$H^{(1)} = \frac{m\chi(r)}{r}\left(\begin{array}{cc}
                                          \cos(\theta+\phi) & \sin(\theta+\phi) \\ 
                                           \sin(\theta+\phi) & -\cos(\theta+\phi)
                                         \end{array} \right)\\ +\widetilde{H}^{(1)},$$
with $\widetilde{H}^{(1)} \in \q H^1_{\delta+1}$. Together with \eqref{estf2}, it yields the estimate \eqref{est}.
Then, in view of the definition \eqref{fj} of $f_1$ and $f_2$, the computations 
\begin{align*}
 \int  \frac{\rho\chi'(r)}{4r}\cos(\eta)rdrd\theta& =  \frac{\pi\rho}{2}  \cos(\eta),\\
 \int  \frac{\rho\chi'(r)}{4r}\sin(\eta)rdrd\theta& =  \frac{\pi\rho}{2}  \sin(\eta),\\
\end{align*}
and the fact that 
$$\int \partial_j \tilde{\tau} = 0,$$ 
yield the identities \eqref{cos} and \eqref{sin}.
\end{proof}

\begin{proof}[Proof of lemma \ref{lemme}]
We look for solutions of the form 
\begin{equation}\label{formeh} K_{ij} = \partial_i Y_j + \partial_j Y_i-\delta_{ij} \partial^k Y_k.
\end{equation}
The vector $Y$ then satisfies the equations
\begin{align*}
 \Delta Y_1 &= f_1,\\
\Delta Y_2 &= f_2.
\end{align*}

We can apply Corollary \ref{coro} which says that 
$$Y_j = \frac{1}{2\pi}\left(\int f_j\right)\chi(r)\ln(r) + \widetilde{Y_j}$$ 
where $\widetilde{Y_j} \in H^2_{\delta}$ satisfies 
$$\|\widetilde{Y_j} \|_{H^2_{\delta}} \lesssim \|f_j\|_{H^0_{\delta+2}}.$$ We have then
\begin{align*}
 K_{11} &= \partial_1 Y_1 - \partial_2 Y_2 = \chi(r)\frac{x_1\int f_1-x_2\int f_2}{2\pi r^2} + \widetilde{K}_{11},\\
K_{12} &= \partial_1 Y_2 + \partial_2 Y_1 = \chi(r)\frac{x_1\int f_2 +x_2\int f_1}{2\pi r^2} + \widetilde{K}_{12},
\end{align*}
where $ \widetilde{K}_{11},  \widetilde{K}_{12} \in H^1_{\delta + 1}$ satisfy
$$\|\widetilde{K}\|_{H^1_{\delta + 1}} \lesssim \|f_1\|_{H^0_{\delta + 2}} +\|f_2\|_{H^0_{\delta + 2}}.$$
Let 
$$m(\cos \phi, \sin\phi) = \frac{1}{2\pi}\left(\int f_1, \int f_2\right).$$  We obtain
\begin{align*}
\frac{x_1\int f_1-x_2\int f_2}{2\pi r^2} &=\frac{m}{r}(\cos\phi \cos \theta-\sin\phi \sin \theta) = \frac{m}{r}\cos(\theta + \phi),\\
\frac{x_1\int f_2 +x_2\int f_1}{2\pi r^2} &=\frac{m}{r}(\sin\phi \cos \theta + \cos\phi \sin \theta) = \frac{m}{r}\sin(\theta + \phi).
\end{align*}
This yields
\begin{align*}
 K_{11} &= \chi(r)\frac{m\cos(\theta+\phi)}{r} +\widetilde{K}_{11},\\
 K_{12} &= \chi(r)\frac{m\sin(\theta+\phi)}{r} +\widetilde{K}_{12}.
\end{align*}
Since $K$ is symmetric and traceless, we have $K_{22} = -K_{11}$ and $K_{12}=K_{21}$.
Moreover, $K$ is unique, because if $\widetilde{H} \in \q H^1_{\delta +1}$ satisfies $D^i \widetilde{H}_{ij}= 0$, then
$\Delta \widetilde{H}_{ij} = 0$ for $i,j= 1,2$, and therefore $\widetilde{H}=0$.
This concludes the proof of Lemma
\ref{lemme}. 
\end{proof}
We point out that the method we have used to solve $\partial_i K_{ij}=f_j$ is the one used in the conformal method. In the conformal method, one looks for solutions of the form
$$K=LY +\sigma,$$
where $L$ is the conformal Killing operator (defined in our case by \eqref{formeh}) and $\sigma$ a transverse traceless tensor. In our case, since there are no transverse traceless tensors which decay at infinity on $\m R^2$, we can pick $\sigma=0$.

\subsection{Proof of Proposition \ref{prph2}}\label{sh2}

 We calculate the right-hand side of \eqref{h2} for $j=1$
\begin{align*}
&\frac{1}{2} \partial_1 \left(\frac{b(\theta)\chi(r)}{r}\right)+ (b(\theta)H_\theta)_{i1} \partial_i (\alpha \chi(r)\ln(r)) +\frac{1}{2}\frac{b(\theta)\chi(r)}{r}\partial_1 (\alpha \chi(r)\ln(r))\\
=&\frac{b(\theta)}{2}\cos(\theta)\left(-\frac{\chi(r)}{r^2}+\frac{\chi'(r)}{r}\right)
-\alpha \frac{b(\theta)\chi(r)}{2r}\left(\chi'(r)\ln(r)+\frac{\chi(r)}{r}\right)(\cos(\theta)\cos(2\theta)+\sin(\theta)\sin(2\theta))\\
&+\alpha\frac{b(\theta)\chi(r)}{2r}\left(\chi'(r)\ln(r)+\frac{\chi(r)}{r}\right)\cos(\theta)-\frac{\sin(\theta)
\partial_\theta b(\theta)}{2}\frac{\chi(r)}{r^2}\\
=&\frac{b}{2}\cos(\theta)\left(-\frac{\chi(r)}{r^2}+\frac{\chi'(r)}{r}\right)
-\frac{\sin(\theta)\partial_\theta b(\theta)}{2}\frac{\chi(r)}{r^2}.
\end{align*}
We have similarly for $j=2$,
\begin{align*}
&\frac{1}{2} \partial_2 \left(\frac{b(\theta)\chi(r)}{r}\right)+ (b(\theta)H_\theta)_{i2} \partial_i (\alpha \chi(r)\ln(r)) +\frac{1}{2}\frac{b(\theta)\chi(r)}{r}\partial_2 (\alpha \chi(r)\ln(r))\\
=&\frac{b(\theta)}{2}\sin(\theta)\left(-\frac{\chi(r)}{r^2}+\frac{\chi'(r)}{r}\right)
+\frac{\cos(\theta)\partial_\theta b(\theta)}{2}\frac{\chi(r)}{r^2}.
\end{align*}
We calculate then,
\begin{align*}
 \partial_i (b(\theta)H_\theta)_{i1} =& -\frac{2b(\theta)\chi(r)}{2r^2}(\sin(\theta)\sin(2\theta)+\cos(\theta)\cos(2\theta))\\
&-\frac{b(\theta)}{2}\left(-\frac{\chi(r)}{r^2} + \frac{\chi'(r)}{r^2}\right)(\cos(\theta)\cos(2\theta) + \sin(\theta)\sin(2\theta))\\
&-\frac{\partial_\theta b(\theta)\chi(r)}{2r^2}(-\sin(\theta)\cos(2\theta)+\cos(\theta)\sin(2\theta))\\
=&-\frac{b(\theta)\chi(r)}{2r^2}\cos(\theta)-\frac{b(\theta)\chi'(r)}{2r}\cos(\theta)
-\frac{\sin(\theta)\partial_\theta b(\theta)}{2}\frac{\chi(r)}{r^2}.
\end{align*}
In the same way
$$ \partial_i (b(\theta)H_\theta)_{i2}=-\frac{b(\theta)\chi(r)}{2r^2}\sin(\theta)-\frac{b(\theta)\chi'(r)}{2r}\sin(\theta)
+\frac{\cos(\theta)\partial_\theta b(\theta)}{2}\frac{\chi(r)}{r^2}.$$
Therefore $b(\theta)H_\theta + \widetilde{H}^{(2)}$ satisfies \eqref{h2} if and only if
\begin{equation}\label{th2}
\partial_i \widetilde{H}^{(2)}_{ij} = f_j,
\end{equation}
with
$$\left(\begin{array}{l}
   f_1\\ f_2
  \end{array}\right)
=\frac{b(\theta)\chi'(r)}{r}\left(\begin{array}{l}
   \cos(\theta) \\ \sin(\theta)
  \end{array}\right).$$
We have $f_1,f_2 \in H^0_{\delta+2}$ with
$$\|f_1\|_{H^0_{\delta+2}} +\|f_2\|_{H^0_{\delta+2}} \lesssim \|b\|_{L^\infty},$$
and, since we assume 
$$\int b(\theta)\cos(\theta)\theta = \int b(\theta)\sin(\theta)d\theta =0,$$
we have also
$$ \int f_1 = \int f_2= 0.$$
Therefore Lemma \ref{lemme} implies that there exists a unique solution $\widetilde{H}^{(2)}\in H^1_{\delta+1}$ of \eqref{th2}. Furthermore it satisfies the estimate
$$\left\|\widetilde{H}^{(2)}\right\|_{\q H^1_{\delta+1}} \lesssim \|b\|_{L^\infty},$$
which concludes the proof of Proposition \ref{prph2}.

\subsection{Proof of Proposition \ref{prph3}}\label{sh3}

We calculate each term of the right-hand side of \eqref{h3} for $j=1$
\begin{equation*}
 \begin{split}
 & \frac{1}{2} \partial_1 \left(\frac{\rho \cos(\theta-\eta)\chi(r)}{r}\right)-\frac{\rho\chi'(r)}{4r}\cos(\eta)\\
&= \frac{\rho\chi(r)}{2r^2}\left(\sin(\theta)\sin(\theta-\eta) -\cos(\theta)\cos(\theta-\eta)\right)+ 
\frac{\rho\chi'(r)}{2r}\cos(\theta)\cos(\theta-\eta)-\frac{\rho\chi'(r)}{4r}\cos(\eta)\\
&=-\frac{\rho\chi(r)}{2r^2}\cos(2\theta-\eta) +\frac{\rho\chi'(r)}{2r}\frac{\cos(\eta) + \cos(2\theta-\eta)}{2}-\frac{\rho\chi'(r)}{4r}\cos(\eta)\\
&=-\frac{\rho\chi(r)}{2r^2}\cos(2\theta-\eta) +\frac{\rho\chi'(r)}{4r}\cos(2\theta-\eta),
 \end{split}
\end{equation*}
\begin{equation*}
 \begin{split}
&(H_{\rho,\eta})_{i1} \partial_i (\alpha \chi(r)\ln(r))
 +\frac{1}{2}\frac{\rho\chi(r)\cos(\theta-\eta)}{r}\partial_1 (\alpha \chi(r)\ln(r))\\
=&  -\frac{\alpha \rho\chi(r)}{4r}\left(\partial_r (\chi(r)\ln(r))\right)\left(\cos(\theta)(\cos(\theta+\eta)+\cos(3\theta - \eta))
+\sin(\theta)(\sin(\theta+ \eta) + \sin(3\theta- \eta))\right)\\
&+ \frac{\alpha \rho \chi(r)}{2r}\left(\partial_r (\chi(r)\ln(r))\right)\cos(\theta)\cos(\theta-\eta)\\
=&-\frac{\alpha \rho\chi(r)}{4r}\left(\partial_r (\chi(r)\ln(r))\right)(\cos(\eta)+\cos(2\theta- \eta))
+ \frac{\alpha \rho \chi(r)}{2r}\left(\partial_r (\chi(r)\ln(r))\right)\frac{\cos(\eta) + \cos(2\theta-\eta)}{2}\\
=&0.
 \end{split}
\end{equation*}
Therefore we have
\begin{align*}
&\frac{1}{2} \partial_1 \left(\frac{\rho \cos(\theta-\eta)\chi(r)}{r}\right)-\frac{\rho\chi'(r)}{4r}\cos(\eta) +(H_{\rho,\eta})_{i1} \partial_i (\alpha \chi(r)\ln(r))\\
 &+\frac{1}{2}\frac{\rho\chi(r)\cos(\theta-\eta)}{r}\partial_1 (\alpha \chi(r)\ln(r))\\
=&-\frac{\rho\chi(r)}{2r^2}\cos(2\theta-\eta)+\frac{\rho\chi'(r)}{4r}\cos(2\theta-\eta).
\end{align*}
In the same way, for $j=2$ we have
\begin{align*}
&\frac{1}{2} \partial_2 \left(\frac{\rho \cos(\theta-\eta)\chi(r)}{r}\right)-\frac{\rho\chi'(r)}{4r}\sin(\eta) +(H_{\rho,\eta})_{i2} \partial_i (\alpha \chi(r)\ln(r))\\
 &+\frac{1}{2}\frac{\rho\chi(r)\cos(\theta-\eta)}{r}\partial_2 (\alpha \chi(r)\ln(r))\\
=&-\frac{\rho\chi(r)}{2r^2}\sin(2\theta-\eta)+\frac{\rho\chi'(r)}{4r}\sin(2\theta-\eta).
\end{align*}
We calculate now
\begin{align*}
 &\partial_1 \left(-\frac{\rho\chi(r)\cos(3\theta-\eta)}{4r}\right) + \partial_2 \left( -\frac{\rho\chi(r)\sin(3\theta-\eta)}{4r}\right)\\
=& -\frac{\rho}{4}(\cos(\theta) \cos(3\theta-\eta) + \sin(\theta) \sin(3\theta-\eta))\left(-\frac{\chi(r)}{r^2} + \frac{\chi'(r)}{r}\right)\\
&-3\frac{\rho}{4}\left(\sin(\theta)\sin(3\theta-\eta) + \cos(\theta) \cos(3\theta-\eta)\right)\frac{\chi(r)}{r^2}\\
=&-\frac{\rho\chi(r)}{2r^2}\cos(2\theta-\eta) - \frac{\rho \chi'(r)}{4r} \cos(2\theta-\eta). 
\end{align*}
In the same way we have
\begin{align*}
 &-\partial_2 \left(-\frac{\rho\chi(r)\cos(3\theta-\eta)}{4r}\right) + \partial_1 \left( -\frac{\rho\chi(r)\sin(3\theta-\eta)}{4r}\right)\\
=&-\frac{\rho\chi(r)}{2r^2}\sin(2\theta-\eta) - \frac{\rho \chi'(r)}{4r} \sin(2\theta-\eta). 
\end{align*}
Therefore
$$H^{(3)}= -\frac{\rho \chi(r)}{4r}\left(\begin{array}{cc}
                                          \cos(3\theta-\eta) & \sin(3\theta-\eta) \\ 
                                           \sin(3\theta-\eta) & -\cos(3\theta-\eta)
                                         \end{array} \right) + \widetilde{H}^{(3)}$$
satisfies \eqref{h3} if and only if
\begin{equation}\label{th3}
\partial_i \widetilde{H}^{(3)}_{ij} = f_j,
\end{equation}
with
$$\left(\begin{array}{l}
   f_1\\ f_2
  \end{array}\right)
=\frac{\rho\chi'(r)}{2r}\left(\begin{array}{l}
   \cos(2\theta-\eta) \\ \sin(2\theta-\eta)
  \end{array}\right).$$
We have $f_1,f_2 \in H^0_{\delta+2}$ with
$$\|f_1\|_{H^0_{\delta+2}} +\|f_2\|_{H^0_{\delta+2}} \lesssim |\rho|,$$
and
$$ \int f_1 = \int f_2= 0.$$
Therefore Lemma \ref{lemme} implies that there exists a unique solution $\widetilde{H}^{(3)}\in H^1_{\delta+1}$ of \eqref{th3}. Furthermore, it satisfies the estimate
$$\left\|\widetilde{H}^{(3)}\right\|_{\q H^1_{\delta+1}} \lesssim |\rho|,$$
which concludes the proof of Proposition \ref{prph3}.

\section{The choice of $\rho,\eta$ and the Lichnerowicz equation}\label{lic}
The goal of this section is to solve equation \eqref{eqlp}.

\subsection{The choice of $\rho,\eta$}
We assume $\dot{u}\nabla u \in H^2_\delta$. Let $\alpha \in \m R$, $b \in L^\infty(\m S^1)$, $\widetilde{\lambda} \in H^2_\delta$ and $\widetilde{H}\in \q H^1_{\delta+1}$. We
consider the map 
\begin{align*}
 G : \m R\times \m S^1 &\rightarrow \m R\times \m S^1\\
(\rho, \eta)& \mapsto (-4m,\phi), 
\end{align*}
with $(m,\phi)$ given by
\begin{align*}
 m\cos(\phi) =& \frac{1}{2\pi}\int \bigg(-\dot{u}.\partial_1 u -\frac{1}{2}\widetilde{\tau}\partial_1 \lambda
-\widetilde{H}_{i1}\partial_i \lambda
-\partial_i \widetilde{\lambda}\big((b(\theta)+\rho\cos(\theta-\eta))H_\theta\big)_{i1}\\ &-\frac{1}{2}\chi(r)\frac{b(\theta)+\rho\cos(\theta-\eta)}{r}\partial_1 \widetilde{\lambda} \bigg)
+\frac{ \rho}{4} \cos(\eta),\\
 m\sin(\phi) = &\frac{1}{2\pi}\int \bigg(-\dot{u}.\partial_2 u 
-\frac{1}{2}\widetilde{\tau}\partial_2 \lambda
-\widetilde{H}_{i2}\partial_i \lambda 
-\partial_i \widetilde{\lambda}\big((b(\theta)+\rho\cos(\theta-\eta))H_\theta\big)_{i2}\\ &-\frac{1}{2}\chi(r)\frac{b(\theta)+\rho\cos(\theta-\eta)}{r}\partial_2 \widetilde{\lambda}\bigg)
+ \frac{ \rho}{4} \sin(\eta).
\end{align*}
We want our solution $H'$ of equation \eqref{eqhp} to have the same form as $H$, in order to find a fixed point $H= H'$ and
$\lambda = \lambda'$. Therefore, we need to show that $G$ has a fixed point. This is done in the following lemma.
\begin{lm}\label{rho}
 If $\|\widetilde{\lambda}\|_{H^2_\delta}\lesssim \varepsilon$, then for $\varepsilon>0$ small enough, $G$ admits an unique fixed point $(\rho,\eta)\in\m R\times [0,2\pi[ $.
Moreover, if we assume
\begin{equation}\label{hypo}\|\dot{u}\nabla u\|_{H^0_{\delta+2}} + \|\widetilde{H}\|_{\q H^1_{\delta+1}} + \|\widetilde{\tau}\|_{H^1_{\delta+1}} +\|\widetilde{\lambda}\|_{H^2_\delta}
+\|b\|_{L^\infty}+ |\alpha| \lesssim \varepsilon,
\end{equation}
then we have
\begin{align*}
 \rho \cos(\eta) &= \frac{1}{\pi}\int \dot{u}.\partial_1 u + O(\varepsilon ^{2}) ,\\
\rho \sin(\eta) &=  \frac{1}{\pi}\int \dot{u}.\partial_2 u + O(\varepsilon ^{2}).
\end{align*}
%Moreover we have
%$$ \partial_\alpha \left(\begin{array}{l}\rho\cos(\eta)\\ 
%\rho\sin(\eta)\end{array}\right) = O(\varepsilon), \quad
%\partial_{\widetilde{\lambda}} \left(\begin{array}{l}\rho\cos(\eta)\\ 
%\rho\sin(\eta)\end{array}\right)= O(\varepsilon), \quad
%\partial_{\widetilde{H}} \left(\begin{array}{l}\rho\cos(\eta)\\ 
%\rho\sin(\eta)\end{array}\right) = O(\varepsilon).$$
\end{lm}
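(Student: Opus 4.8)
The plan is to linearize the problem by passing to the Cartesian variables $X=\rho\cos\eta$, $Y=\rho\sin\eta$. The point is that the only occurrences of $(\rho,\eta)$ in the formulas defining $(m,\phi)$, namely $H_{\rho,\eta}$ and the term $\rho\cos(\theta-\eta)$ sitting inside $\tau$, are homogeneous linear in $(X,Y)$: indeed $\rho\cos(\theta+\eta)=X\cos\theta-Y\sin\theta$, $\rho\cos(3\theta-\eta)=X\cos3\theta+Y\sin3\theta$, $\rho\cos(\theta-\eta)=X\cos\theta+Y\sin\theta$, and similarly for the sines. Hence I can write
$$m\cos(\phi)=J_1+\tfrac{\pi}{2}X+L_1(X,Y),\qquad m\sin(\phi)=J_2+\tfrac{\pi}{2}Y+L_2(X,Y),$$
where $(J_1,J_2)$ is the part of the defining integrals not involving $(\rho,\eta)$ (that is, with $H_{\rho,\eta}$ deleted and $\rho\cos(\theta-\eta)$ deleted), and $L=(L_1,L_2)$ is the linear map $L_j(X,Y)=\int\bigl(-\partial_i\widetilde\lambda\,(H_{\rho,\eta})_{ij}-\tfrac{1}{2}\chi(r)\tfrac{\rho\cos(\theta-\eta)}{r}\partial_j\widetilde\lambda\bigr)$. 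Since a fixed point of $G$ is exactly a pair with $\rho=-4m$ and $\eta=\phi$, the fixed-point equation becomes, in the variables $(X,Y)$,
$$\bigl((1+2\pi)\,\mathrm{Id}+4L\bigr)\begin{pmatrix}X\\Y\end{pmatrix}=-4\begin{pmatrix}J_1\\J_2\end{pmatrix}.$$
It is essential to move the contributions $\tfrac{\pi\rho}{2}\cos\eta=\tfrac{\pi}{2}X$ and $\tfrac{\pi\rho}{2}\sin\eta=\tfrac{\pi}{2}Y$ to the left-hand side: they are not small, so $G$ is not itself a contraction, but combined with the $-4m$ they produce the coefficient $1+2\pi$, which is precisely what appears in the asymptotic formulas.

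Next I would estimate $L$. Using that $\widetilde\lambda\in H^2_\delta$ gives $\partial_i\widetilde\lambda\in H^0_{\delta+1}$, that multiplication by $\tfrac{\chi(r)}{r}$ maps $H^0_{\delta+1}$ into $H^0_{\delta+2}$ by Lemma \ref{produit2} (with $\alpha=-\tfrac12$), that $H^0_{\delta+2}\subset L^1$ for $-1<\delta<0$ (the computation at the end of the proof of Corollary \ref{coro}), and the bound $|(H_{\rho,\eta})_{ij}|\lesssim(|X|+|Y|)\tfrac{\chi(r)}{r}$, one obtains $|L_j(X,Y)|\lesssim(|X|+|Y|)\,\|\widetilde\lambda\|_{H^2_\delta}$, hence $\|4L\|\lesssim\|\widetilde\lambda\|_{H^2_\delta}\lesssim\varepsilon$. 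Since $1+2\pi\neq0$, for $\varepsilon$ small enough the $2\times2$ matrix $(1+2\pi)\,\mathrm{Id}+4L$ is invertible (Neumann series), so the displayed system has a unique solution $(X,Y)$, which yields a unique $(\rho,\eta)\in\m R\times[0,2\pi[$.

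For the asymptotics I would work under \eqref{hypo} and show $J_1=-\int\dot u\,\partial_1u+O(\varepsilon^2)$ and $J_2=-\int\dot u\,\partial_2u+O(\varepsilon^2)$. Every term of $J_j$ besides $-\int\dot u\,\partial_ju$ is the integral of a product of two factors each of which is $O(\varepsilon)$ in the appropriate weighted norm: for instance $\int\widetilde\tau\,\partial_j\widetilde\lambda$ is controlled by $\|\widetilde\tau\|_{H^1_{\delta+1}}\|\widetilde\lambda\|_{H^2_\delta}$ via Proposition \ref{produit}, $\int\widetilde H_{ij}\,\partial_i\lambda$ by $(|\alpha|+\|\widetilde\lambda\|_{H^2_\delta})\|\widetilde H\|_{\q H^1_{\delta+1}}$, $\int\widetilde\tau\,\partial_j(\alpha\chi(r)\ln r)$ by $|\alpha|\,\|\widetilde\tau\|_{H^1_{\delta+1}}$, and $\int\partial_i\widetilde\lambda\,(H_b)_{ij}$ and $\int\chi(r)\tfrac{b}{r}\partial_j\widetilde\lambda$ by $|b|\,\|\widetilde\lambda\|_{H^2_\delta}$; all are $O(\varepsilon^2)$. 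Taking norms in the affine system gives $|(X,Y)|\lesssim|(J_1,J_2)|\lesssim\varepsilon$ for $\varepsilon$ small, and then
$$\begin{pmatrix}X\\Y\end{pmatrix}=-\frac{4}{1+2\pi}\begin{pmatrix}J_1\\J_2\end{pmatrix}-\frac{4}{1+2\pi}L\begin{pmatrix}X\\Y\end{pmatrix}=-\frac{4}{1+2\pi}\begin{pmatrix}J_1\\J_2\end{pmatrix}+O(\varepsilon^2),$$
so $\rho\cos\eta=X=\tfrac{4}{1+2\pi}\int\dot u\,\partial_1u+O(\varepsilon^2)$ and likewise $\rho\sin\eta=\tfrac{4}{1+2\pi}\int\dot u\,\partial_2u+O(\varepsilon^2)$.

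I expect the main obstacle to be bookkeeping rather than conceptual: correctly isolating the genuinely homogeneous-linear dependence on $(X,Y)$ and verifying that the coupling operator $L$ carries an honest factor $\|\widetilde\lambda\|_{H^2_\delta}$, so that it is a small perturbation of $(1+2\pi)\,\mathrm{Id}$. This rests on the weighted product rule, the multiplication Lemma \ref{produit2}, and the $L^1$-integrability of weighted $H^0$ functions at the weight $\delta+2$ with $-1<\delta<0$.
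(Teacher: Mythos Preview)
Your proposal is correct and follows essentially the same approach as the paper: both linearize the fixed-point condition in the Cartesian variables $(\rho\cos\eta,\rho\sin\eta)$, move the explicit $\tfrac{\pi\rho}{2}(\cos\eta,\sin\eta)$ terms to the left to produce the coefficient $1+2\pi$, bound the remaining $(\rho,\eta)$-dependent integrals by $O(\varepsilon)$ via $\|\widetilde\lambda\|_{H^2_\delta}$, and invert the resulting $2\times2$ system. Your write-up is in fact somewhat more explicit than the paper's in justifying the linearity in $(X,Y)$ and the $L^1$ integrability of the weighted terms.
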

\begin{proof}
 We write
$$a_j =\frac{1}{2\pi} \int \left(-\dot{u}.\partial_j u -\frac{1}{2}\widetilde{\tau}\partial_j \lambda
-\widetilde{H}_{ij}\partial_i \lambda
-\partial_i \widetilde{\lambda}(b(\theta)H_\theta)_{ij} -\frac{1}{2}\chi(r)\frac{b}{r}\partial_j \widetilde{\lambda} \right).$$
The conditions $\rho=-4m$ and $\eta=\phi$ are satisfied if and only if
\begin{align}
\label{rho1} \rho\cos(\eta)&= -4\left( a_1 +
\frac{1}{2\pi}\int\left(-\partial_i \widetilde{\lambda}(\rho\cos(\theta-\eta)H_\theta)_{i1} -\frac{1}{2}\chi(r)\frac{\rho\cos(\theta-\eta)}{r}\partial_1 \widetilde{\lambda}\right)
  +\frac{\rho}{4} \cos(\eta)\right),\\
\label{rho2} \rho\sin(\eta)&= -4\left(a_2
+\frac{1}{2\pi}\int\left(-\partial_i \widetilde{\lambda}(\rho\cos(\theta-\eta)H_\theta)_{i2} -\frac{1}{2}\chi(r)\frac{\rho\cos(\theta-\eta)}{r}\partial_2 \widetilde{\lambda}\right)
 +\frac{ \rho}{4} \sin(\eta)\right).
\end{align}
Since we assume $\|\widetilde{\lambda}\|_{H^2_\delta}\lesssim \varepsilon$, we can write this system under the form
$$\left(\begin{array}{cc}
         1 +O(\varepsilon) & O(\varepsilon)\\
	 O(\varepsilon) & 1+O(\varepsilon)
        \end{array}\right) \left( \begin{array}{c} \rho\cos(\eta) \\ \rho \sin(\eta) \end{array}\right)
 = -2\left(\begin{array}{c} a_1\\ a_2 \end{array}\right) .$$
For $\varepsilon>0$ small enough, it is invertible, so $G$ has a fixed point,  and we obtain, under
the hypothesis \eqref{hypo}
\begin{align*}
 \rho \cos(\eta) &= \frac{1}{\pi}\int \dot{u}\partial_1 u + O(\varepsilon ^{2}) ,\\
\rho \sin(\eta) &=  \frac{1}{\pi}\int \dot{u}\partial_2 u + O(\varepsilon ^{2}),
\end{align*}
which concludes the proof of Lemma \ref{rho}.
\end{proof}

\subsection{The Lichnerowicz equation}

\begin{prp}\label{lichn}
 Let $\dot{u}^2,|\nabla{u}|^2 \in H^0_{\delta+2}$. Let $b\in L^\infty(\m S^1)$, $\widetilde{\tau} \in H^1_{\delta +1}$, $\widetilde{H} \in \q H^1_{\delta+1}$, $\rho, \eta \in \m R$ and 
\begin{align*}
 H &= (b(\theta)+\rho\cos(\theta-\eta) + \widetilde{H},\\
\tau &= \frac{b(\theta)\chi(r)}{r} +\frac{\rho\chi(r)}{r}\cos(\theta-\eta) +\widetilde{\tau}.
\end{align*}
There exists a unique $\lambda'$ of the form 
$$\lambda' = -\alpha'\chi(r)\ln(r) + \widetilde{\lambda}',$$
with $\widetilde{\lambda'} \in H^2_{\delta}$, solution of
\begin{equation}\label{lambda}
\Delta \lambda' =- \frac{1}{2}\dot{u}^2 - \frac{1}{2}|\nabla u|^2-\frac{1}{2}|H|^2+\frac{\tau^2}{4}.
\end{equation}
Moreover, if 
\begin{equation}\label{est2}
\|\dot{u}^2\|_{H^0_{\delta+2}} + \||\nabla u|^2\|_{H^0_{\delta+2}} + \|\widetilde{H}\|_{\q H^1_{\delta+1}} + \|\widetilde{\tau}\|_{H^1_{\delta+1}} 
+\|b\|_{L^\infty}+ |\rho| \lesssim \varepsilon,
\end{equation}
we have 
$$\|\widetilde{\lambda'}\|_{H^2_\delta} \lesssim\|\dot{u}^2+|\nabla u|^2\|_{H^0_{\delta+2}} +\varepsilon^2$$
and
$$ \alpha' = \frac{1}{4\pi} \int \left(\dot{u}^2 + |\nabla u|^2\right) + O(\varepsilon^2).$$
\end{prp}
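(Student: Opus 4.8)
The plan is to rewrite the right-hand side of \eqref{lambda} so that its slowly-decaying tail cancels, leaving a function in $H^0_{\delta+2}$ to which Corollary \ref{coro} applies directly. Put $f=-\frac12\dot u^2-\frac12|\nabla u|^2-\frac12|H|^2+\frac14\tau^2$ and split $H=H_{bg}+\widetilde H$, $\tau=\tau_{bg}+\widetilde\tau$ with $H_{bg}=H_b+H_{\rho,\eta}$ and $\tau_{bg}=\frac{b\chi(r)}{r}+\frac{\rho\chi(r)}{r}\cos(\theta-\eta)$. The crucial point is the \emph{pointwise} identity $\frac12|H_{bg}|^2=\frac14\tau_{bg}^2$. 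To prove it, let $M(\psi)$ be the symmetric traceless matrix with $M(\psi)_{11}=\cos\psi=-M(\psi)_{22}$ and $M(\psi)_{12}=M(\psi)_{21}=\sin\psi$, so that $M(\psi_1)_{ij}M(\psi_2)_{ij}=2\cos(\psi_1-\psi_2)$; since $H_b=-\frac{b\chi}{2r}M(2\theta)$ and $H_{\rho,\eta}=-\frac{\rho\chi}{4r}\big(M(\theta+\eta)+M(3\theta-\eta)\big)$, a direct expansion yields
\[
\frac12|H_{bg}|^2=\frac{\chi^2}{4r^2}\big(b+\rho\cos(\theta-\eta)\big)^2=\frac14\tau_{bg}^2 .
\]
Hence all the purely background terms cancel and
\[
f=-\frac12\dot u^2-\frac12|\nabla u|^2-(H_{bg})_{ij}\widetilde H_{ij}-\frac12|\widetilde H|^2+\frac12\tau_{bg}\widetilde\tau+\frac14\widetilde\tau^2 .
\]

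Next I would check that $f\in H^0_{\delta+2}$ with the asserted bound. The terms $\dot u^2,|\nabla u|^2$ are in $H^0_{\delta+2}$ by hypothesis. Since $|H_{bg}|+|\tau_{bg}|\lesssim(|b|+|\rho|)(1+|x|^2)^{-1/2}$, Lemma \ref{produit2} shows that multiplication by $H_{bg}$ or $\tau_{bg}$ maps $H^0_{\delta+1}$ to $H^0_{\delta+2}$, so $(H_{bg})_{ij}\widetilde H_{ij}$ and $\tau_{bg}\widetilde\tau$ belong to $H^0_{\delta+2}$ with norm $\lesssim(|b|+|\rho|)\big(\|\widetilde H\|_{\q H^1_{\delta+1}}+\|\widetilde\tau\|_{H^1_{\delta+1}}\big)$; and $|\widetilde H|^2,\widetilde\tau^2$ belong to $H^0_{\delta+2}$ by Proposition \ref{produit} with $s=0$, $s_1=s_2=1$ (the condition $\delta+2<2(\delta+1)+1$ being exactly $\delta>-1$), with norm $\lesssim\|\widetilde H\|_{\q H^1_{\delta+1}}^2+\|\widetilde\tau\|_{H^1_{\delta+1}}^2$. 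Therefore $\|f\|_{H^0_{\delta+2}}\lesssim\|\dot u^2\|_{H^0_{\delta+2}}+\||\nabla u|^2\|_{H^0_{\delta+2}}+\big(|b|+|\rho|+\|\widetilde H\|_{\q H^1_{\delta+1}}+\|\widetilde\tau\|_{H^1_{\delta+1}}\big)\big(\|\widetilde H\|_{\q H^1_{\delta+1}}+\|\widetilde\tau\|_{H^1_{\delta+1}}\big)$, which is $\lesssim\varepsilon$ under \eqref{est2}. Corollary \ref{coro} then gives a solution $\lambda'=\big(\int f\big)\chi(r)\ln r+v$ with $v\in H^2_\delta$ and $\|v\|_{H^2_\delta}\lesssim\|f\|_{H^0_{\delta+2}}\lesssim\varepsilon$; one sets $\alpha'=-\int f$ and $\widetilde\lambda'=v$, which has the required form and bound.

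For uniqueness, the difference $w$ of two solutions of the required form is harmonic on $\m R^2$ and equals $-(\alpha_1'-\alpha_2')\chi(r)\ln r+(\widetilde\lambda_1'-\widetilde\lambda_2')$ with $\widetilde\lambda_1'-\widetilde\lambda_2'\in H^2_\delta$. Since $H^0_{\delta+2}\subset L^1$ (shown in the proof of Corollary \ref{coro}), integrating $\Delta w=0$ and using $\int\Delta(\chi(r)\ln r)=1$ (which also comes out of that same proof) together with $\int\Delta(\widetilde\lambda_1'-\widetilde\lambda_2')=0$ (Theorem \ref{laplacien}) forces $\alpha_1'=\alpha_2'$, after which injectivity of $\Delta:H^2_\delta\to H^0_{\delta+2}$ gives $\widetilde\lambda_1'=\widetilde\lambda_2'$. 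Finally, $\alpha'=-\int f=\frac12\int(\dot u^2+|\nabla u|^2)+\int\big((H_{bg})_{ij}\widetilde H_{ij}+\frac12|\widetilde H|^2-\frac12\tau_{bg}\widetilde\tau-\frac14\widetilde\tau^2\big)$, and each remaining integral is $O(\varepsilon^2)$: indeed $\int|\widetilde H|^2\le\|\widetilde H\|_{L^2}^2\lesssim\|\widetilde H\|_{\q H^1_{\delta+1}}^2$ because $\delta+1>0$, and $\int\big|(H_{bg})_{ij}\widetilde H_{ij}\big|\le\big\|(1+|x|^2)^{-(\delta+1)/2}|H_{bg}|\big\|_{L^2}\|\widetilde H\|_{H^0_{\delta+1}}\lesssim(|b|+|\rho|)\|\widetilde H\|_{\q H^1_{\delta+1}}$, the weighted $L^2$ norm being finite precisely because $\delta>-1$; the $\tau_{bg}\widetilde\tau$ and $\widetilde\tau^2$ integrals are bounded in the same way.

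The heart of the argument, and the only genuinely delicate step, is the pointwise cancellation $\frac12|H_{bg}|^2=\frac14\tau_{bg}^2$: this is exactly the ``careful adjustment of the asymptotic behaviour of $\tau$'' announced in the introduction, and without it the right-hand side of \eqref{lambda} would only decay like $r^{-2}$, fail to lie in $H^0_{\delta+2}$ for $-1<\delta<0$, and put Corollary \ref{coro} out of reach. Everything else is routine bookkeeping with the weighted product (Proposition \ref{produit}) and multiplication (Lemma \ref{produit2}) estimates together with Corollary \ref{coro} and Theorem \ref{laplacien}.
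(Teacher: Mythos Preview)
Your proof is correct and follows essentially the same route as the paper: you establish the pointwise cancellation $\tfrac12|H_b+H_{\rho,\eta}|^2=\tfrac14\tau_{bg}^2$, use Proposition~\ref{produit} and Lemma~\ref{produit2} to place the cross and remainder terms in $H^0_{\delta+2}$, and then invoke Corollary~\ref{coro}. Your treatment of uniqueness and of the $O(\varepsilon^2)$ remainder in $\alpha'$ is slightly more explicit than the paper's, but the substance is identical.
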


\begin{proof}
We want to apply Corollary \ref{coro}. We have to check that the right-hand side of \eqref{lambda} is in $H^0_{\delta+2}$.
We write
\begin{equation}\label{hettau}
\begin{split} 
|H|^2 &=|(b(\theta)+\rho\cos(\theta-\eta))H_\theta|^2 + f_1,\\
\tau^2 &= \left(\frac{(b+\rho\cos(\theta-\eta))\chi(r)}{r}\right)^2+f_2. 
\end{split}
\end{equation}

We first estimate $f_1$ and $f_2$.
Since $\widetilde{\tau} \in H^1_{\delta+1}$, $\widetilde{H} \in \q H^1_{\delta+1}$, we have thanks to Proposition \ref{produit} that $\widetilde{\tau}^2 ,|\widetilde{H}|^2
 \in H^0_{\delta+2}$, and thanks to Lemma \ref{produit2}, 
$$\widetilde{\tau}\left(\frac{(b+\rho\cos(\theta-\eta))\chi(r)}{r}\right) \in H^0_{\delta+2}, \quad
\widetilde{H}^{ij}((b(\theta)+\rho\cos(\theta-\eta))H_\theta)_{ij} \in H^0_{\delta+2}.$$
Therefore, we have $f_1, f_2 \in H^0_{\delta+2}$ with
\begin{equation}\label{estf}
 \|f_1\|_{H^0_{\delta+2}} + \|f_2\|_{H^0_{\delta+2}} \lesssim \|\widetilde{H}\|^2_{\q H^1_{\delta+1}}+ 
\|\widetilde{\tau}\|^2_{ H^1_{\delta+1}} +\|b\|_{L^\infty}^2 + \rho^2\lesssim \varepsilon^2.
\end{equation}

We now compute $\frac{1}{2}|(b(\theta)+\rho\cos(\theta-\eta))H_\theta|^2$ and 
$\frac{1}{4}\left(\frac{(b+\rho\cos(\theta-\eta))\chi(r)}{r}\right)^2$. Since we have
$$|H_\theta|^2 =\frac{2\chi(r)^2}{4r^2}(\cos(2\theta)^2+\sin(2\theta)^2)= \frac{\chi(r)^2}{2r^2},$$
we obtain
\begin{equation}\label{eg}
\frac{1}{2} |b(\theta)+\rho\cos(\theta-\eta))H_\theta|^2 = \frac{1}{4}\left(\frac{(b+\rho\cos(\theta-\eta))\chi(r)}{r}\right)^2.
\end{equation}

\eqref{hettau}, \eqref{estf} and \eqref{eg} imply that 
the right-hand side of \eqref{lambda} is in $H^0_{\delta+2}$ 
with
\begin{equation*}
\left\|- \frac{1}{2}\dot{u}^2 - \frac{1}{2}|\nabla u|^2-\frac{1}{2}|H|^2+\frac{\tau^2}{4}\right\|_{H^0_{\delta+2}}
\lesssim \|\dot{u}^2\|_{H^0_{\delta+2}} + \||\nabla u|^2\|_{H^0_{\delta+2}}
+\varepsilon^2, 
\end{equation*}
so Corollary \ref{coro} gives a unique solution of \eqref{lambda} of the form
$$\lambda' = -\alpha' \chi(r)\ln(r) + \widetilde{\lambda'},$$
with
$$\alpha' = \frac{1}{2\pi}\int \left( \frac{1}{2}\dot{u}^2 +\frac{1}{2}|\nabla u|^2+\frac{1}{2}|H|^2-\frac{\tau^2}{4}\right),$$
and $\widetilde{\lambda'} \in H^2_{\delta}$ .
If we assume \eqref{est2}, we obtain 
$$\alpha'=  \frac{1}{4\pi} \int \left(\dot{u}^2 + |\nabla u|^2\right) + O\left(\|f_1\|_{H^0_{\delta+2}} + \|f_2\|_{H^0_{\delta+2}}\right)
 =\frac{1}{4\pi} \int \left(\dot{u}^2 + |\nabla u|^2\right) + O(\varepsilon^2),$$
which, together with the estimate
$$\|\widetilde{\lambda'}\|_{H^2_{\delta}}\lesssim\|\dot{u}^2\|_{H^0_{\delta+2}} + \||\nabla u|^2\|_{H^0_{\delta+2}}+
\varepsilon^2,$$
concludes the proof of Proposition \ref{lichn}.
\end{proof}

\section{Proof of Theorem \ref{main}}\label{concl}
We note $X$ the Banach space $\m R \times H^2_\delta \times \q H^1_{\delta+1}$
equipped with the norm
$$\|(\alpha,\wht \lambda, \wht H)\|_{X}=|\alpha|+\|\wht \lambda\|_{H^2_\delta}
+\|\wht H\|_{\q H^1_{\delta+1}}.$$
We now have constructed, for $\varepsilon>0$ small enough, a map
$F:X\rightarrow X$
which maps $(\alpha, \widetilde{\lambda},\widetilde{H})$ satisfying
$$\|(\alpha,\wht \lambda, \wht H)\|_{X} \lesssim \varepsilon$$
to $(\alpha', \widetilde{\lambda'},\widetilde{H'})$ such that, if 
 $\rho, \eta$, depending on $\dot{u}, \nabla u , \widetilde{\tau}, \alpha, \lambda, \widetilde{H},b$, are given by Lemma \ref{rho},
 then 
$$H'=b(\theta)+\rho\cos(\theta-\eta))H_\theta + \widetilde{H}'$$
is the solution of
$$\partial_i H'_{ij} +H_{ij}\partial_i \lambda  = -\dot{u}.\partial_j u + \frac{1}{2} \partial_j \tau-\frac{1}{2} \tau\partial_j \lambda,$$
given by Proposition \ref{solh}, with
\begin{align*}
 H=& (b(\theta)+\rho\cos(\theta-\eta))H_\theta+ \widetilde{H}, \\
\lambda= &-\alpha\chi(r)\ln(r) + \widetilde{\lambda},\\
\tau = &\frac{\chi(r)}{r}(b(\theta)+\rho\cos(\theta-\eta))+\widetilde{\tau},
\end{align*}
and 
$$\lambda'= -\alpha'\chi(r)\ln(r) + \widetilde{\lambda'}$$
is the solution of
$$\Delta \lambda' =- \frac{1}{2}\dot{u}^2 - \frac{1}{2}|\nabla u|^2-\frac{1}{2}|H|^2+\frac{\tau^2}{4}$$
given by Proposition \ref{lichn}. Lemma \ref{rho} implies $|\rho|\lesssim \varepsilon$, and therefore Proposition \ref{solh} and Proposition \ref{lichn} imply
$$|\alpha'| + \| \widetilde{\lambda'}\|_{H^2_\delta} + \| \widetilde{H'}\|_{\q H^1_{\delta+1}} \lesssim \varepsilon.$$
In particular 
$\|F((0,0,0))\|_ {X} \lesssim \varepsilon,$
so there exists a constant $C$ such that 
$$\|F((0,0,0))\|_ {X}=C\varepsilon.$$\\

Next, we show that $F$ is a contraction map in  $B_{X }(0,2C\varepsilon)$.
We take, for $i=1,2$,  $(\alpha_i,\widetilde{\lambda}_i, \widetilde{H}_i)\in\m R \times H^2_\delta \times \q H^1_{\delta+1} $ such that
$$|\alpha_i|+\| \widetilde{\lambda}_i\|_{H^2_\delta} + \| \widetilde{H}_i\|_{\q H^1_{\delta+1}} \leq 2C \varepsilon.$$
We note $\rho_i$, $\eta_i$ the corresponding quantities given by Lemma \ref{rho}.
Thanks to formula \eqref{rho1} and \eqref{rho2} we get
\begin{align*}
&\rho_1\cos(\eta_1)-\rho_2\cos(\eta_2)\\
=&-\frac{1}{\pi}\int\left(-\frac{\wht \tau}{2}(\partial_1\lambda_1-\partial_1 \lambda_2)-(\wht H_1)_{i1}(\partial_i \lambda_1-\partial_i \lambda_2)+\partial_i \lambda_2((\wht H_1)_{i1}-(\wht H_2)_{i1})\right)\\
&-\frac{1}{\pi}\int\left(-(b(\theta)H_\theta)_{i1}(\partial_i \wht \lambda_1-\partial_i \wht \lambda_2)
-\frac{\chi(r)b}{2r}(\partial_1 \wht \lambda_1-\partial_1 \wht \lambda_2)
\right)\\
&-\frac{1}{\pi}\int -\left(\partial_i \wht \lambda_1(H_\theta)_{i1}+\frac{\chi(r)}{2}\partial_1 \wht \lambda_1\right)(\rho_1\cos(\theta-\eta_1)-\rho_2\cos(\theta-\eta_2))\\
&-\frac{1}{\pi}\int\rho_2\cos(\theta-\eta_2)\left((H_\theta)_{i1}(\partial_i \wht \lambda_1-\partial_i \wht \lambda_2)+\frac{\chi(r)}{2}(\partial_1 \wht \lambda_1-\partial_1 \wht \lambda_2)\right),
\end{align*}
and a similar formula for $\rho_1\sin(\eta_1)-\rho_2\sin(\eta_2)$. Lemma \ref{rho} implies that $|\rho_i|\lesssim \varepsilon$. Therefore we can write
\begin{align*}
|\rho_1\cos(\eta_1)-\rho_2 \cos(\eta_2)|\lesssim &\varepsilon\left(|\alpha_1-\alpha_2|  + \|\widetilde{\lambda}_1-\widetilde{\lambda}_2\|_{H^2_{\delta}}
+\|\widetilde{H}_1-\widetilde{H}_2\|_{\q H^1_{\delta+1}}\right) \\
& + \varepsilon\left( |\rho_1\cos(\eta_1)-\rho_2 \cos(\eta_2)| +|\rho_1\sin(\eta_1)-\rho_2 \sin(\eta_2)|\right),\\
|\rho_1\sin(\eta_1)-\rho_2 \sin(\eta_2)|\lesssim  &\varepsilon\left(|\alpha_1-\alpha_2| + \|\widetilde{\lambda}_1-\widetilde{\lambda}_2\|_{H^2_{\delta}}
+\|\widetilde{H}_1-\widetilde{H}_2\|_{\q H^1_{\delta+1}}\right) \\
&+ \varepsilon\left( |\rho_1\cos(\eta_1)-\rho_2 \cos(\eta_2)| +|\rho_1\sin(\eta_1)-\rho_2 \sin(\eta_2)|\right),
\end{align*}
and so
\begin{align*}
|\rho_1\cos(\eta_1)-\rho_2 \cos(\eta_2)|\lesssim &\varepsilon\left(|\alpha_1-\alpha_2|  + \|\widetilde{\lambda}_1-\widetilde{\lambda}_2\|_{H^2_{\delta}}
+\|\widetilde{H}_1-\widetilde{H}_2\|_{\q H^1_{\delta+1}}\right), \\
|\rho_1\sin(\eta_1)-\rho_2 \sin(\eta_2)|\lesssim  &\varepsilon\left(|\alpha_1-\alpha_2| + \|\widetilde{\lambda}_1-\widetilde{\lambda}_2\|_{H^2_{\delta}}
+\|\widetilde{H}_1-\widetilde{H}_2\|_{\q H^1_{\delta+1}}\right) .
\end{align*}
We decompose
$$H'_i = H^{(1)}_i +H^{(2)}_i + H^{(3)}_i,$$
where $ H^{(1)}_i, H^{(2)}_i, H^{(3)}_i$ satisfy equations \eqref{h1}, \eqref{h2} and \eqref{h3}. We have seen during the proof of Proposition \ref{prph2} that we can write
$$ H^{(2)}_i= b(\theta)H_\theta+\wht H^{(2)}_i,$$
where $\wht H^{(2)}_i$ satisfies
$$\partial_l\left( \wht H^{(2)}_i\right)_{lj}= \frac{b(\theta)\chi'(r)}{r}\left(
\begin{array}{l}\cos(\theta)\\ \sin(\theta)\end{array}\right).$$
We have seen during the proof of Proposition \ref{prph3} that we can write
$$H^{(3)}_i=-\frac{\rho_i \chi(r)}{4r}\left(\begin{array}{cc}
                                          \cos(3\theta-\eta_i) & \sin(3\theta-\eta_i) \\ 
                                           \sin(3\theta-\eta_i) & -\cos(3\theta-\eta_i)
                                         \end{array} \right)+\widetilde{H}^{(3)}_i,$$
where    $\widetilde{H}^{(3)}_i$ satisfies
$$\partial_l\left(\widetilde{H}^{(3)}_i\right)_{lj}=\frac{\rho_i\chi'(r)}{2r}   \left(
\begin{array}{l}\cos(2\theta-\eta_i)\\ \sin(2\theta-\eta_i)\end{array}\right).$$                                 
Therefore we can write 
$$H'_i=b(\theta)H_\theta - \frac{\rho_i \chi(r)}{4r}\left(\begin{array}{cc}
                                          \cos(3\theta-\eta_i) & \sin(3\theta-\eta_i) \\ 
                                           \sin(3\theta-\eta_i) & -\cos(3\theta-\eta_i)
                                         \end{array} \right) + K_i,$$
where $K_i$ satisfies
\begin{align*}
\partial_l (K_i )_{lj} =& -\dot{u}\partial_j u +\frac{1}{2}\partial_j \widetilde{\tau} -\frac{1}{2}\widetilde{\tau}\partial_j \lambda_i
-(\widetilde{H}_i)_{lj}\partial_l \lambda_i \\
 & +(g_i)_{j}  -\partial_l \widetilde{\lambda_i}((b(\theta)+\rho_i\cos(\theta-\eta_i))H_\theta)_{lj} -\frac{1}{2}\chi(r)\frac{b+\rho_i\cos(\theta-\eta_i)}{r}\partial_j \widetilde{\lambda_i},\\
\end{align*}
with 
$$\left(\begin{array}{l}
   (g_i)_1\\ (g_i)_2
  \end{array}\right)
=\frac{\chi'(r)}{r}\left(\begin{array}{l}
   b(\theta)\cos(\theta) +\frac{\rho_i}{2}\cos(2\theta-\eta_i)+\frac{\rho_i}{4}\cos(\eta_i)\\ 
b(\theta)\sin(\theta)+\frac{\rho_i}{2}\sin(2\theta-\eta_i)+\frac{\rho_i}{4}\cos(\eta_i)
  \end{array}\right).$$
  Consequently $K_1-K_2$ satisfies
  \begin{align*}
  \partial_l (K_1-K_2)_{lj}=&-\frac{1}{2}\wht \tau\partial_j(\lambda_1-\lambda_2)-(\wht H_1)_{lj}\partial_j \lambda_1
  +(\wht H_2)_{lj}\partial_j \lambda_2+(g_1)_j-(g_2)_j\\
  &-\partial_l\wht \lambda_1((b(\theta)+\rho_1\cos(\theta-\eta_1))H_\theta)_{lj}
  +\partial_l\wht \lambda_2((b(\theta)+\rho_2\cos(\theta-\eta_2))H_\theta)_{lj}\\
  &-\frac{1}{2}\chi(r)\frac{b+\rho_1\cos(\theta-\eta_1)}{r}\partial_j \widetilde{\lambda_1}+\frac{1}{2}\chi(r)\frac{b+\rho_2\cos(\theta-\eta_2)}{r}\partial_j \widetilde{\lambda_2}.
  \end{align*}
The right-hand side is in $H^0_{\delta+2}$
and satisfies
\begin{align*}
\|\partial_l (K_1-K_2)_{lj}\|_{H^0_{\delta+2}} 
\lesssim &\varepsilon \left(|\alpha_1-\alpha_2| + \|\widetilde{\lambda}_1-\widetilde{\lambda}_2\|_{H^2_{\delta}}
+\|\widetilde{H}_1-\widetilde{H}_2\|_{\q H^1_{\delta+1}}\right) \\
&+|\rho_1\cos(\eta_1)-\rho_2 \cos(\eta_2)| +|\rho_1\sin(\eta_1)-\rho_2 \sin(\eta_2)|\\
\lesssim &\varepsilon\left(|\alpha_1-\alpha_2| + \|\widetilde{\lambda}_1-\widetilde{\lambda}_2\|_{H^2_{\delta}}
+\|\widetilde{H}_1-\widetilde{H}_2\|_{\q H^1_{\delta+1}}\right) .
\end{align*}
Therefore, we can apply Lemma \ref{lemme} which yields
$$K_1-K_2=   \frac{m \chi(r)}{r}\left(\begin{array}{cc}
                                          \cos(\theta+\phi) & \sin(\theta+\phi) \\ 
                                           \sin(\theta+\phi) & -\cos(\theta+\phi)
                                         \end{array} \right) + \widetilde{K},$$
with $\widetilde{K}\in \q H^1_{\delta+1}$ which satisfies
$$\|\widetilde{K}\|_{\q H^1_{\delta+1}}\lesssim \varepsilon\left(|\alpha_1-\alpha_2| + \|\widetilde{\lambda}_1-\widetilde{\lambda}_2\|_{H^2_{\delta}}
+\|\widetilde{H}_1-\widetilde{H}_2\|_{\q H^1_{\delta+1}}\right) .$$
Moreover, since we can also write
$$H'_i=(b(\theta)+\rho_i \cos(\theta-\eta_i))H_\theta+\wht H'_i,$$
we have
\begin{align*}
K_1-K_2=&-\frac{\chi(r)}{4r}\left(\begin{array}{cc}
                                          \rho_1\cos(\theta+\eta_1)-\rho_2\cos(\theta+\eta_2) & \rho_1\sin(\theta+\eta_1)-\rho_2\sin(\theta+\eta_2) \\ 
                                       \rho_1\sin(\theta+\eta_1)-\rho_2\sin(\theta+\eta_2)  & -\rho_1\cos(\theta+\eta_1)+\rho_2\cos(\theta+\eta_2) 
                                         \end{array} \right)\\
                                         &+\wht H'_i-\wht H'_2\\
= &\frac{M \chi(r)}{r}\left(\begin{array}{cc}
                                          \cos(\theta+\Phi) & \sin(\theta+\Phi) \\ 
                                           \sin(\theta+\Phi) & -\cos(\theta+\Phi) \end{array}\right) +   \wht H'_i-\wht H'_2, 
                                           \end{align*}
where $M(\cos(\Phi),\sin(\Phi))=-\frac{1}{4}(\rho_1\cos(\eta_1)-\rho_2\cos(\eta_2),
\rho_1\sin(\eta_1)-\rho_2\sin(\eta_2))$.                                      By uniqueness of the decomposition given by Lemma \ref{lemme}, we obtain
$$\widetilde{K}= \widetilde{H}'_1-\widetilde{H}'_2,$$ 
and we deduce
\begin{equation}\label{difh}
\|\widetilde{H}'_1-\widetilde{H}'_2\|_{\q H^1_{\delta+1}}\lesssim \varepsilon\left(|\alpha_1-\alpha_2| + \|\widetilde{\lambda}_1-\widetilde{\lambda}_2\|_{H^2_{\delta}}
+\|\widetilde{H}_1-\widetilde{H}_2\|_{\q H^1_{\delta+1}}\right) .
\end{equation}
Last we estimate
\begin{align*}
\|\Delta \lambda'_1-\Delta \lambda'_2\|_{H^2_\delta} \lesssim&
\varepsilon \|\widetilde{H}_1-\widetilde{H}_2\|_{\q H^1_{\delta+1}}
 + \varepsilon \left(|\rho_1\cos(\eta_1)-\rho_2 \cos(\eta_2)| +|\rho_1\sin(\eta_1)-\rho_2 \sin(\eta_2)|\right)\\
\lesssim &\varepsilon\left(|\alpha_1-\alpha_2| + \|\widetilde{\lambda}_1-\widetilde{\lambda}_2\|_{H^2_{\delta}}
+\|\widetilde{H}_1-\widetilde{H}_2\|_{\q H^1_{\delta+1}}\right). 
\end{align*}
Thus, thanks to Corollary \ref{coro}, we have
\begin{equation}\label{difl}
|\alpha'_1-\alpha'_2| + \|\widetilde{\lambda'}_1- \widetilde{\lambda'}_2\|_{H^2_\delta}
\lesssim \varepsilon\left(|\alpha_1-\alpha_2| + \|\widetilde{\lambda}_1-\widetilde{\lambda}_2\|_{H^2_{\delta}}
+\|\widetilde{H}_1-\widetilde{H}_2\|_{\q H^1_{\delta+1}}\right).
\end{equation}\\

In view of \eqref{difh} and \eqref{difl}, we have proved that for $\varepsilon$ small enough there exists
$\wht C$  such that for $(\alpha_i, \wht \lambda_i, \wht H_i) \in B_X(0,2C\varepsilon)$ we have
$$\|F((\alpha_1, \wht \lambda_1, \wht H_1))-F((\alpha_2, \wht \lambda_2, \wht H_2))\|_X\leq \wht C\varepsilon\|(\alpha_1, \wht \lambda_1, \wht H_1)-(\alpha_2, \wht \lambda_2, \wht H_2)\|_X.$$
In particular, if we apply this to $(\alpha_2, \wht \lambda_2, \wht H_2)=(0,0,0)$ we obtain
$$\|F((\alpha_1, \wht \lambda_1, \wht H_1))-C\varepsilon\|_X\leq 2\wht CC\varepsilon^2.$$ 
Therefore if $\varepsilon$ is such that
$2\wht C\varepsilon \leq 1$, $F$ is a contraction map from $B_X(0,2C\varepsilon)$ to itself.
Therefore, $F$ has a unique fixed point $(\alpha, \widetilde{\lambda}, \widetilde{H})$.
The estimates of Lemma \ref{rho} and Proposition \ref{lichn} complete the proof of Theorem \ref{main}.

\paragraph{Acknowledgement}
The author would like to thank Rafe Mazzeo for stimulating discussions concerning this problem, and also Jérémie Szeftel for his attentive reading of this 
article and his kind supervising.

\bibliographystyle{plain}
\bibliography{contrainte}

\end{document}